\newtheorem{theorem}{Theorem}[section]
\newtheorem{lemma}[theorem]{Lemma}
\newtheorem{proposition}[theorem]{Proposition}
\newtheorem{corollary}[theorem]{Corollary}
\newtheorem{definition}[theorem]{Definition}
\newtheorem{remark}[theorem]{Remark}
\begin{document}
	
\title[Partial isometries]{Partial isometries in an absolute order unit space}
\author{Anil Kumar Karn and Amit kumar}
	
\address{School of Mathematical Sciences, National Institute of Science Education and Research, HBNI, Bhubaneswar, P.O. - Jatni, District - Khurda, Odisha - 752050, India.}

\email{\textcolor[rgb]{0.00,0.00,0.84}{anilkarn@niser.ac.in, amit.kumar@niser.ac.in}}

\subjclass[2010]{Primary 46B40; Secondary 46L05, 46L30.}
	
\keywords{Absolutely ordered space, absolute oder unit space, absolute matrix order unit space, order projection, partial isometry.}

\begin{abstract}
In this paper, we extend the notion of orthogonality to the general elements of an absolute matrix order unit space and relate it to the orthogonality among positive elements. We introduce the notion of a partial isometry in an absolute matrix order unit space. As an application, we describe the comparison of order projections. We also discuss finiteness of order projections.
\end{abstract}

\thanks{The second author was financially supported by the Senior Research Fellowship of the University Grants Commission of India.}

\maketitle

\section{Introduction}

Order structure is one of the basic ingredient of C$^*$-algebra theory. If we keep Gelfand-Naimark theorem \cite{GN} as well as Kakutani theorem \cite{SK} in one place, we can deduce that the self-adjoint part of every commutative C$^*$-algebra is a Banach lattice besides having some other properties. As a contrast, Kadison's anti-lattice theorem \cite{Kad51} informs us that the self-adjoint part of a non-commutative C$^*$-algebra can not be a vector lattice. Thus the study of the order structure of a general C$^*$-algebra opens as an interesting area. The corresponding theory evolves a study of ordered vector spaces not having any vector lattice structure.

Initial significant contribution in this direction begins with Kadison's functional representation of operator (C$^*$-) algebras \cite{RVK} where he proved that the self adjoint part of any operator system can be represented as the space of affine continuous real valued functions on the state space of the given operator system. (An \emph{operator system} is a unital, self adjoint subspace of a unital C$^*$-algebra.) This seminal result turned out to be a benchmark and unfolded in the duality theory of ordered vector spaces. Early development of this theory can be found in the works of Bonsall \cite{B54,B56}, Edwards \cite{E64}, Ellis \cite{JE64}, Asimov \cite{LA,A74}, Ng \cite{KFN} besides many other people.

However, as a breakthrough, Choi and Effros \cite{CE77} characterized operator systems a \emph{matrix order unit spaces} (definition is given latter). This result set another benchmark for the study of the (order theoretic) functional analysis. Besides many others, the first author started working on the order theoretic aspect of C$^*$-algebras. He provided an order theoretic characterization of the algebraic orthogonality among positive elements of C$^*$-algebras. This characterization leads to notions of \emph{absolute order unit spaces} and \emph{absolute matrix order unit spaces} (definitions are given latter). Absolute order unit spaces can seen as a generalization of $AM$-spaces. (An $AM$-space is a Banach lattice satisfying an additional condition.) Here orthogonality plays an important role. 

In \cite{K18}, the first author developed a notion of spectral family of (order) projections for (self adjoint) elements of an absolute order unit space. In \cite{KK19}, the authors proved that a unital linear bijection between two absolute (matrix) order unit spaces is a (complete) isometry if and only if it is a (complete) absolute value preserving map. These results underscore the order theoretic proximity of absolute matrix order unit spaces to unital C$^*$-algebras. In this paper, we continue our study in the same direction. We provide an order theoretic definition of partial isometries suitable to an absolute matrix order unit spaces and extend the comparison theory of projections in a C$^*$-algebra to the comparison theory of order projections in an absolute matrix order unit spaces. The development of the paper is as follows. 

As per the definition of an absolutely ordered space, orthogonality is defined only among positive elements. In Section 2, we extend the notion of orthogonality to general elements. First, we extend it to general (self adjoint) elements of an absolutely ordered space and then to all elements in an absolutely matrix ordered space. We briefly discuss some of the properties of an absolute matrix order unit space. In section 3, we introduce partial isometries and other related notions order theoretically in an absolute matrix order unit space. We discuss some of the properties related to these notions. As an application of partial isometries, In Section 4, we propose a comparison of order projections in Section 4. This notion generalizes that of Murray-von Neumann equivalence of projections in a C$^*$-algebra. In this paper, we term it partial isometry equivalence. We also generalize unitary equivalence of projections in a C$^*$-algebra to absolute matrix order unit spaces. We discuss properties related to these two notions. In Section 5, we study finiteness of order projections.

\section{Extending orthogonality}

We begin with a definition. 
\begin{definition} \cite[Definition 3.4]{K18}
	A real ordered vector space $(V, V^+)$ is said to be an {\emph absolutely ordered space}, if there exists a mapping $\vert\cdot\vert: V \to V^+$ satisfying the following conditions:
	\begin{enumerate}
		\item $\vert v \vert = v$ if $v \in V^+$; 
		\item $\vert v \vert \pm v \in V^+$ for all $v \in V$; 
		\item $\vert k v \vert = \vert k \vert \vert v \vert$ for all $v \in V$ and $k \in \mathbb{R}$; 
		\item If  $\vert u - v \vert = u + v$ and $0 \le w \le v$, then $\vert u - w \vert = u + w$; and
		\item If $\vert u - v \vert = u + v$ and $\vert u - w \vert = u + w$, then $\vert u - \vert v \pm w \vert \vert = u + \vert v \pm w \vert$.
	\end{enumerate} 
	It is denoted by $(V, V^+, \vert\cdot\vert)$, or simply, by $V$, if there is no ambiguity.
\end{definition}

\begin{remark}\cite[Remark 2.3]{KK19}
	Let $(V, V^+, \vert\cdot\vert)$ be an absolutely ordered space.
	\begin{enumerate}
		\item  The cone $V^+$ is proper and generating. 		
		\item Let $u, v \in V$ be such that $\vert u - v\vert = u + v$. Then $u, v \in V^+$. For such a pair $u, v \in V^+$, we shall say that $u$ is \emph{orthogonal} to $v$ and denote it by $u \perp v$.
		\item We write, $v^+ := \frac{1}{2}(\vert v \vert + v)$ and $v^- := \frac{1}{2}(\vert v \vert - v)$. Then $v^+ \perp v^-, v = v^+ - v^-$ so that $\vert v \vert = v^+ + v^-$. This decomposition is unique. 
	\end{enumerate}
\end{remark}

 Let us also recall that the notion of absolute value can be characterized in terms of the natural orthogonality in $V^+$. In fact the following result (in a weaker form) was proved in \cite{K18}. The proof can be replicated. 
\begin{theorem} \cite[Theorem 3.1]{K18}
	Let $(V, V^+)$ be a real ordered vector space. Then $V$ is an absolutely ordered space if and only if there exists a binary relation $\perp$ in $V^+$ satisfying the following conditions: 
\begin{enumerate}
	\item $u \perp 0$ for all $u \in V^+$.
	\item If $u \perp v$, then $v \perp u$.
	\item If $u \perp v$, then $k u \perp k v$ for all $k \in \mathbb{R}$ with $k > 0$.
	\item For each $v \in V$, there exist a unique pair $v_1, v_2 \in V^+$ such that $v = v_1 - v_2$ with $v_1 \perp v_2$. 
	
	Let us write $\vert v \vert := v_1 + v_2$. 
	\item If $u, v, w \in V^+$, $u \perp v$ and $w \le v$, then $u \perp w$; and 
	\item If $u, v, w \in V^+$, $u \perp v$ and $u \perp w$, then $u \perp \vert v \pm w \vert$.
\end{enumerate} 
\end{theorem}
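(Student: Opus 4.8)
The plan is to prove the two implications separately, the linchpin in both directions being a dictionary between the map $\vert\cdot\vert$ and the relation $\perp$: for $u, v \in V^+$ one has $\vert u - v\vert = u + v$ if and only if $u \perp v$.

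For the forward implication, assume $V$ is an absolutely ordered space and define $u \perp v$ (for $u, v \in V^+$) to mean $\vert u - v\vert = u + v$. Remark 2.3(2) guarantees this relation is meaningful inside $V^+$, and Remark 2.3(3) supplies, for each $v \in V$, the decomposition $v = v^+ - v^-$ with $v^+ \perp v^-$, together with its uniqueness. I would then check the six conditions one at a time: (1) is axiom (1) applied to $\vert u\vert = u$; (2) follows from axiom (3) with $k = -1$, since $\vert v - u\vert = \vert u - v\vert$; (3) is axiom (3) with $k > 0$; (4) is exactly Remark 2.3(3) after setting $v_1 = v^+$ and $v_2 = v^-$; (5) is a direct reading of axiom (4); and (6) is a direct reading of axiom (5). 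Each is essentially a one-line translation, so the only care needed is bookkeeping about which elements are known to lie in $V^+$.

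For the converse, assume the relation $\perp$ with properties (1)--(6) and define $\vert v\vert := v_1 + v_2$ using the unique decomposition $v = v_1 - v_2$, $v_1 \perp v_2$ from condition (4). The key step is the dictionary lemma: for $u, v \in V^+$, the equality $\vert u - v\vert = u + v$ holds if and only if $u \perp v$. The reverse direction is immediate from uniqueness of the decomposition, while for the forward direction I would take the canonical decomposition $u - v = a - b$ with $a \perp b$, so that $a + b = \vert u - v\vert = u + v$ and $a - b = u - v$; adding and subtracting forces $a = u$ and $b = v$, whence $u \perp v$. With this lemma in hand, the defining axioms fall out: (1) from condition (1) and uniqueness; (2) since $\vert v\vert + v = 2v_1 \in V^+$ and $\vert v\vert - v = 2v_2 \in V^+$; and (4), (5) by feeding the dictionary lemma into conditions (5) and (6) respectively.

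The step I expect to cost the most attention is the homogeneity axiom (3), $\vert kv\vert = \vert k\vert\,\vert v\vert$, in the converse direction. For $k > 0$, condition (3) shows $kv_1 \perp kv_2$, so by uniqueness $(kv_1, kv_2)$ is the canonical decomposition of $kv$ and the identity follows; for $k = 0$ one uses $0 \perp 0$, which is condition (1) at $u = 0$; and for $k < 0$ one must invoke symmetry (condition (2)) together with condition (3) applied to $-k > 0$ to identify the canonical decomposition of $kv$ as $\bigl((-k)v_2, (-k)v_1\bigr)$, which then gives $\vert kv\vert = (-k)(v_1 + v_2) = \vert k\vert\,\vert v\vert$. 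Keeping the roles of $v_1$ and $v_2$ straight across the sign change of $k$, and invoking symmetry at exactly the right moment, is the one genuinely fiddly point; everything else is a faithful transcription between the two axiom systems.
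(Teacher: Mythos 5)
Your proposal is correct, but note that the paper itself offers no proof of this theorem: it states the result with the citation \cite[Theorem 3.1]{K18} and the remark that ``the proof can be replicated,'' so there is nothing in the paper to compare against line by line. Your argument supplies exactly the missing content, and it is the natural one given how the paper uses these notions elsewhere (Remark 2.3 is precisely your dictionary $\vert u - v\vert = u + v \Leftrightarrow u \perp v$, and the paper invokes it in this way throughout Section 2). Both directions check out: the forward direction is the one-line translation you describe, and in the converse your dictionary lemma plus uniqueness from condition (4) does carry each axiom, with the case analysis on the sign of $k$ handling homogeneity correctly. One bookkeeping refinement worth making explicit: you state the dictionary lemma only for $u, v \in V^+$, but when verifying axioms (4) and (5) of an absolutely ordered space the hypothesis is merely $\vert u - v\vert = u + v$ for a priori arbitrary $u, v \in V$; fortunately your own proof of the lemma (canonical decomposition $u - v = a - b$, then $a = u$, $b = v$) never uses positivity of $u$ and $v$, so it actually proves the stronger statement that $\vert u - v\vert = u + v$ forces $u, v \in V^+$ and $u \perp v$ --- which is also how the paper's Remark 2.3(2) phrases it. With that restriction dropped, your argument is complete.
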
 
In this section, we extend the notion of orthogonality to general elements. For $u, v \in V$, we say that $u$ is {\emph orthogonal} to $v$, we still write $u \perp v$, if $\vert \vert u \vert - \vert v \vert \vert = \vert u \vert + \vert v \vert$. (Or equivalently, $\vert u \vert \perp \vert v \vert$.) In the next result, we characterize orthogonality among general elements in an absolutely ordered space in terms of positive elements. Let us first recall that for each $v \in V$, $v^+, v^- \in V^+$ is the unique pair such that $v^+ \perp v^-$ and $v = v^+ - v^-$ (and $\vert v \vert = v^+ + v^-$). Here $2 v^+ := \vert v \vert + v$ and $2 v^- := \vert v \vert - v$.

\begin{proposition}\label{1}
	Let $(V, V^+, \vert\cdot\vert)$ be an absolutely ordered space and let $u, v \in V$. Then the following statements are equivalent: 
	\begin{enumerate}
		\item $u \perp v$;
		\item $u^+, u^-, v^+, v^-$ are mutually orthogonal;
		\item $\vert u \pm v \vert = \vert u \vert + \vert v \vert$.
	\end{enumerate}
\end{proposition}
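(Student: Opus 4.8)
The plan is to reduce everything to two basic features of orthogonality among positive elements supplied by the preceding characterization theorem, and then to run a cycle of implications. First I would record the two working tools. The \emph{hereditary} property: if $a, b, c, d \in V^+$ with $a \perp b$, $c \le a$ and $d \le b$, then $c \perp d$; this follows by applying condition (5) of the characterization theorem twice, interleaved with the symmetry of $\perp$. The \emph{additivity} property: if $a, b, c \in V^+$ with $a \perp b$ and $a \perp c$, then $a \perp (b + c)$; this follows from condition (6), since $b, c \in V^+$ gives $b + c \in V^+$ and hence $\vert b + c\vert = b + c$. Throughout I would keep in mind the standing relations $u^+ \perp u^-$ and $v^+ \perp v^-$, valid for every $u, v \in V$.

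For $(1)\Rightarrow(2)$: by definition $u \perp v$ means $\vert u\vert \perp \vert v\vert$, i.e. $(u^+ + u^-) \perp (v^+ + v^-)$. Since $u^\pm \le \vert u\vert$ and $v^\pm \le \vert v\vert$, the hereditary property yields $u^+ \perp v^+$, $u^+ \perp v^-$, $u^- \perp v^+$ and $u^- \perp v^-$, which together with the standing relations give mutual orthogonality. The converse $(2)\Rightarrow(1)$ runs through additivity: from $u^+ \perp v^+$ and $u^+ \perp v^-$ I get $u^+ \perp (v^+ + v^-) = \vert v\vert$, likewise $u^- \perp \vert v\vert$, and a further application gives $\vert u\vert = (u^+ + u^-) \perp \vert v\vert$, which is $(1)$.

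For $(2)\Leftrightarrow(3)$: assuming $(2)$, repeated additivity gives $(u^+ + v^+) \perp (u^- + v^-)$ and $(u^+ + v^-) \perp (u^- + v^+)$. Then $u + v = (u^+ + v^+) - (u^- + v^-)$ is an orthogonal decomposition, so the uniqueness in condition (4) forces $\vert u + v\vert = (u^+ + v^+) + (u^- + v^-) = \vert u\vert + \vert v\vert$, and the second decomposition handles $\vert u - v\vert$, giving $(3)$. Conversely, assuming $(3)$, I would compute positive and negative parts directly from $2(u \pm v)^\pm = \vert u \pm v\vert \pm (u \pm v)$: the hypotheses $\vert u \pm v\vert = \vert u\vert + \vert v\vert$ force $(u+v)^+ = u^+ + v^+$, $(u+v)^- = u^- + v^-$ and $(u-v)^+ = u^+ + v^-$, $(u-v)^- = u^- + v^+$. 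Since the two parts of any element are orthogonal, this yields $(u^+ + v^+) \perp (u^- + v^-)$ and $(u^+ + v^-) \perp (u^- + v^+)$, and the hereditary property extracts all four cross-orthogonalities, recovering $(2)$.

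The computations are routine once the two tools are in place; the one point demanding care is that statement $(3)$ carries both signs, and each sign is genuinely needed: the $+$ sign produces the pair $u^+ \perp v^-$, $u^- \perp v^+$, while the $-$ sign produces $u^+ \perp v^+$, $u^- \perp v^-$, and only together do they force full mutual orthogonality. Thus the main obstacle is bookkeeping rather than a deep idea: one must track which order relations $u^\pm \le (\cdot)$, $v^\pm \le (\cdot)$ feed the hereditary property in each case, and be careful to invoke uniqueness of the orthogonal decomposition when converting an orthogonality of sums back into an absolute-value identity.
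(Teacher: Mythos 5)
Your proof is correct and follows essentially the same route as the paper's: the same cycle of implications $(1)\Rightarrow(2)\Rightarrow(1)$, $(2)\Leftrightarrow(3)$, using heredity and additivity of orthogonality on $V^+$ together with uniqueness of the orthogonal decomposition. If anything, your treatment of $(3)\Rightarrow(2)$ (computing $(u\pm v)^{\pm}$ explicitly and noting that each sign yields a different pair of cross-orthogonalities) spells out details the paper compresses into ``as before, we may get.''
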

\begin{proof} 
	(1) implies (2): Let $u \perp v$ or equivalently, $\vert u \vert \perp \vert v \vert$. As $0 \le u^+, u^- \le  \vert u \vert$ and $0 \le v^+, v^- \le \vert v \vert$, a repeated use of the definition yields that $u^+, u^-, v^+, v^-$ are mutually orthogonal.
	
	(2) implies (1): Let $u^+, u^-, v^+, v^-$ be mutually orthogonal. Then by the definition, we get $(u^+ + u^-) \perp (v^+ + v^-)$ for $\vert u^+ + u^- \vert = u^+ + u^-$ and $\vert v^+ + v^- \vert = v^+ + v^-$. That is, $\vert u \vert \perp \vert v \vert$ so that $u \perp v$.

	(2) implies (3): Again, let $u^+, u^-, v^+, v^-$ be mutually orthogonal. Then by the definition, once again, we have $(u^+ + v^+) \perp (u^- + v^-)$ and $(u^+ + v^-) \perp (u^- + v^+)$. Thus 
	\begin{eqnarray*}
		\vert u + v \vert &=& \vert u^+ - u^- + v^+ - v^- \vert \\ 
		&=& \vert (u^+ + v^+) - (u^- + v^-) \vert \\
		&=& u^+ + v^+ + u^- + v^- = \vert u \vert + \vert v \vert
	\end{eqnarray*}
	and 
	\begin{eqnarray*}
		\vert u - v \vert &=& \vert u^+ - u^- - v^+ + v^- \vert \\ 
		&=& \vert (u^+ + v^-) - (u^- + v^+) \vert \\
		&=& u^+ + v^- + u^- + v^+ = \vert u \vert + \vert v \vert.
	\end{eqnarray*}
	(3) implies (2): Finally, assume that $\vert u \pm v \vert = \vert u \vert + \vert v \vert$. Then as before, we may get that $(u^+ + v^+) \perp (u^- + v^-)$ and $(u^+ + v^-) \perp (u^- + v^+) $. Thus $u^+, u^-, v^+, v^-$ are mutually orthogonal.
\end{proof}
Now, we shall extend the notion of orthogonality to the general elements in an absolutely matrix ordered space. Recall that a \emph{matrix ordered space} is a $*$-vector space $V$ together with a sequence $\lbrace M_n(V)^+ \rbrace$ with $M_n(V)^+ \subset M_n(V)_{sa}$ for each $n \in \mathbb{N}$ satisfying the following conditions: 
\begin{enumerate}
	\item[(a)] $(M_n(V)_{sa}, M_n(V)^+)$ is a real ordered vector space, for each $n \in \mathbb{N}$; and  
	\item[(b)] $\alpha^* v \alpha \in M_m(V)^+$ for all $v \in M_n(V)^+$, $\alpha \in M_{n,m}$ and $n ,m \in \mathbb{N}$. 
\end{enumerate} 
It is denoted by $(V, \lbrace M_n(V)^+ \rbrace)$. 
\begin{definition}[\cite{KK19}, Definition 4.1]\label{2}
	Let $(V, \lbrace \ M_n(V)^+ \rbrace)$ be a matrix ordered space and assume that  $\vert\cdot\vert_{m,n}: M_{m,n}(V) \to M_n(V)^+$ for $m, n \in \mathbb{N}$. Let us write $\vert\cdot\vert_{n,n} = \vert\cdot\vert_n$ for every $n \in \mathbb{N}$. Then $\left(V, \lbrace M_n(V)^+ \rbrace, \lbrace \vert\cdot\vert_{m,n} \rbrace \right)$ is called an \emph{absolutely matrix ordered space}, if it satisfies the following conditions: 
	\begin{enumerate}
		\item[$1.$] For all $n \in \mathbb{N}$, $(M_n(V)_{sa}, M_n(V)^+, \vert\cdot\vert_n)$ is an absolutely ordered space;
		\item[$2.$] For $v \in M_{m,n}(V), \alpha \in M_{r,m}$ and $\beta \in M_{n,s},$ we have
		$$\vert \alpha v \beta \vert_{r,s} \leq \| \alpha \| \vert \vert v \vert_{m,n} \beta \vert_{n,s};$$
		\item[$3.$] For $v \in M_{m,n}(V)$ and $w \in M_{r,s}(V),$ we have
		$$\vert v \oplus w\vert_{m+r,n+s} = \vert v \vert_{m,n} \oplus \vert w \vert_{r,s}.$$
		Here $v \oplus w := \begin{bmatrix} v & 0 \\ 0 & w \end{bmatrix}$.
	\end{enumerate} 
Let $u, v \in M_{m,n}(V)$. We say that $u$ is \emph{orthogonal} to $v$, (we continue to write, $u \perp v$), if $\begin{bmatrix} 0 & u \\ u^* & 0 \end{bmatrix} \perp \begin{bmatrix} 0 & v \\ v^* & 0 \end{bmatrix}$ in $M_{m+n}(V)_{sa}$.
\end{definition}
We also recall the following properties. 
\begin{remark} \cite[Proposition 4.2]{KK19} \label{3}
	Let $(V, \lbrace M_n(V)^+ \rbrace, \lbrace \vert\cdot\vert_{m,n} \rbrace)$ be an absolutely matrix ordered space.  
	\begin{enumerate}
		\item If $\alpha \in M_{r,m}$ is an isometry i.e. $\alpha^* \alpha = I_m,$ then $\vert \alpha v \vert_{r,n} = \vert v \vert_{m,n}$ for any $v \in M_{m,n}(V).$
		\item If $v \in M_{m,n}(V),$ then $\left\vert \begin{bmatrix} 0_m & v \\ v^* & 0_n \end{bmatrix} \right\vert_{m+n} = \vert v^* \vert_{n,m} \oplus \vert v \vert_{m,n}.$
		\item $\begin{bmatrix} \vert v^* \vert_{n,m} & v \\ v^* & \vert v \vert_{m,n} \end{bmatrix} \in M_{m+n}(V)^+$ for any $v \in M_{m,n}.$
		\item $\vert v \vert_{m,n} = \left\vert \begin{bmatrix} v \\ 0 \end{bmatrix} \right\vert_{m+r,n}$ for any $v \in M_{m,n}$ and $r \in \mathbb{N}.$
		\item $\vert v \vert_{m,n} \oplus 0_s = \left\vert \begin{bmatrix} v & 0 \end{bmatrix} \right\vert_{m,n+s}$ for any $v \in M_{m,n}(V)$ and $s \in \mathbb{N}.$
	\end{enumerate} 
\end{remark}
\begin{remark}\label{4}
	Let $V$ be an absolutely matrix ordered space. 
	\begin{enumerate}
		\item Let $u_1,u_2\in M_m(V)^+$ and $v_1,v_2\in M_n(V)^+$ for some $m, n \in \mathbb{N}$. Then $u_1 \oplus v_1 \perp u_2 \oplus v_2$ in $M_{m+n}(V)^+$ if, and only if, $u_1 \perp u_2$ and $v_1 \perp v_2$ in $M_m(V)^+$ and $M_n(V)^+$ respectively. In fact, by \ref{2}(3), we have 
		$$\left \vert \begin{bmatrix} u_1 & 0 \\ 0 & v_1\end{bmatrix} - \begin{bmatrix} u_2 & 0 \\ 0 & v_2\end{bmatrix} \right \vert = \begin{bmatrix} \vert u_1-u_2\vert & 0 \\ 0 & \vert v_1-v_2\vert \end{bmatrix} =  \begin{bmatrix} u_1+u_2 & 0 \\ 0 & v_1+v_2 \end{bmatrix}.$$ 
		\item In particular, for $u,v\in M_{m,n}(V)$, we have $u\perp v$ if and only if $\vert u\vert_{m,n} \perp \vert v\vert_{m,n}$ and $\vert u^*\vert_{n,m} \perp \vert v^*\vert_{n,m}$ (using Remark \ref{3}(2)).
	\end{enumerate}
	
\end{remark}

\begin{proposition}\label{5}
	Let $V$ be an absolutely matrix ordered space and let $u,v\in M_{m,n}(V)$.
	\begin{enumerate}
		\item $u\perp v$ if and only if, $\vert u\pm v\vert_{m,n} = \vert u\vert_{m,n} +\vert v\vert_{m,n}$ and $\vert u^*\pm v^*\vert_{n,m} = \vert u^*\vert_{n,m} +\vert v^*\vert_{n,m}.$
		\item If $u \perp v$, then $\left \vert \begin{bmatrix} u \\ v\end{bmatrix}\right \vert_{2m,n} = \vert u\vert_{m,n} + \vert v\vert_{m,n}$ and $\left \vert \begin{bmatrix} u & v\end{bmatrix}\right \vert_{m,2n} = \vert u\vert_{m,n} \oplus \vert v\vert_{m,n}.$
	\end{enumerate} 
\end{proposition}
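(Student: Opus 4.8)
The plan is to reduce both statements to Proposition \ref{1} applied inside the absolutely ordered space $M_{m+n}(V)_{sa}$ (for part (1)) and inside $M_{2m,n}(V)$ and $M_{m,2n}(V)$ (for part (2)), using the bookkeeping identities of Remarks \ref{3} and \ref{4}.

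For part (1), I would set $\wt u = \begin{bmatrix} 0 & u \\ u^* & 0\end{bmatrix}$ and $\wt v = \begin{bmatrix} 0 & v \\ v^* & 0\end{bmatrix}$ in $M_{m+n}(V)_{sa}$, so that by Definition \ref{2} the relation $u \perp v$ is, \emph{by definition}, the relation $\wt u \perp \wt v$. Since $M_{m+n}(V)_{sa}$ is an absolutely ordered space, Proposition \ref{1} (equivalence of (1) and (3)) gives $\wt u \perp \wt v$ iff $\vert \wt u \pm \wt v\vert = \vert \wt u\vert + \vert \wt v\vert$. Now $\wt u \pm \wt v$ is again of the form $\begin{bmatrix} 0 & u\pm v \\ (u \pm v)^* & 0\end{bmatrix}$, so Remark \ref{3}(2) evaluates every absolute value here as a block-diagonal element: $\vert \wt u \pm \wt v\vert = \vert u^* \pm v^*\vert_{n,m} \oplus \vert u \pm v\vert_{m,n}$ while $\vert \wt u\vert + \vert \wt v\vert = (\vert u^*\vert_{n,m} + \vert v^*\vert_{n,m}) \oplus (\vert u\vert_{m,n} + \vert v\vert_{m,n})$. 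Equality of two block-diagonal elements forces equality of the diagonal blocks, which is exactly the pair of conditions in (1); conversely the two block identities reassemble into the single direct-sum identity. This direction needs no new tools.

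For part (2), I would first prove the auxiliary relations $\begin{bmatrix} u \\ 0\end{bmatrix} \perp \begin{bmatrix} 0 \\ v\end{bmatrix}$ in $M_{2m,n}(V)$ and $\begin{bmatrix} u & 0\end{bmatrix} \perp \begin{bmatrix} 0 & v\end{bmatrix}$ in $M_{m,2n}(V)$, and then apply part (1) to the orthogonal pair, since $\begin{bmatrix} u\\v\end{bmatrix} = \begin{bmatrix} u\\0\end{bmatrix} + \begin{bmatrix}0\\v\end{bmatrix}$ and $\begin{bmatrix} u & v\end{bmatrix} = \begin{bmatrix} u&0\end{bmatrix} + \begin{bmatrix} 0&v\end{bmatrix}$. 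To verify the first orthogonality I would use the criterion of Remark \ref{4}(2): it suffices to check that the two ``column'' absolute values are orthogonal and that the two ``row'' (adjoint) absolute values are orthogonal. The column side reduces, via Remarks \ref{3}(1) and \ref{3}(4), to $\vert u\vert_{m,n} \perp \vert v\vert_{m,n}$, which holds because $u \perp v$ (Remark \ref{4}(2)); the row side reduces, via the zero-padding identities, to $(\vert u^*\vert_{n,m} \oplus 0_m) \perp (0_m \oplus \vert v^*\vert_{n,m})$, which holds automatically by Remark \ref{4}(1) since each summand is orthogonal to the complementary zero block. Once orthogonality is in hand, part (1) gives $\left\vert \begin{bmatrix} u\\v\end{bmatrix}\right\vert_{2m,n} = \left\vert \begin{bmatrix}u\\0\end{bmatrix}\right\vert_{2m,n} + \left\vert \begin{bmatrix}0\\v\end{bmatrix}\right\vert_{2m,n} = \vert u\vert_{m,n} + \vert v\vert_{m,n}$, and the row case is entirely symmetric (now invoking $\vert u^*\vert_{n,m}\perp\vert v^*\vert_{n,m}$), yielding $\vert u\vert_{m,n} \oplus \vert v\vert_{m,n}$ after evaluating the two padded terms.

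The step I expect to be the main obstacle is the careful evaluation of the \emph{padded} absolute values, in particular getting the zero block on the correct side: e.g. I need $\left\vert \begin{bmatrix} 0 & v^*\end{bmatrix}\right\vert_{n,2m} = 0_m \oplus \vert v^*\vert_{n,m}$ rather than $\vert v^*\vert_{n,m} \oplus 0_m$, since only the former is orthogonal to $\vert u^*\vert_{n,m}\oplus 0_m$ by Remark \ref{4}(1). Remark \ref{3}(5) supplies the identity with the zero block on the right, so what is really needed is its mirror image, which I would obtain by conjugating with the block-permutation (flip) isometry $\begin{bmatrix} 0 & I_m \\ I_m & 0\end{bmatrix}$; establishing covariance of $\vert\cdot\vert$ under this flip (a short argument from Definition \ref{2}(2) applied with the flip and its inverse, together with Remark \ref{3}(1)) is the one genuinely fiddly point. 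Everything else is the formal orthogonality bookkeeping of Remarks \ref{3} and \ref{4}.
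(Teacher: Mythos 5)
Your proposal is correct and follows essentially the same route as the paper: part (1) is Proposition \ref{1} applied in the absolutely ordered space $M_{m+n}(V)_{sa}$ together with Remark \ref{3}(2), and part (2) decomposes $\begin{bmatrix} u \\ v \end{bmatrix}$ and $\begin{bmatrix} u & v \end{bmatrix}$ into orthogonal zero-padded summands and then invokes Remark \ref{4} and part (1), exactly as the paper does. The only place you go beyond the paper is that you explicitly flag and fill the mirrored padding identity $\left\vert \begin{bmatrix} 0 & v^* \end{bmatrix}\right\vert_{n,2m} = 0_m \oplus \vert v^*\vert_{n,m}$ (via conjugation by the flip isometry), a step the paper uses tacitly.
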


\begin{proof}
	(1). First, assume that $u \perp v$. Then by Proposition \ref{1}, we have 
	\begin{eqnarray*}
		\begin{bmatrix}\vert u^*\pm v^*\vert_{n,m} & 0 \\ 0 & \vert u\pm v\vert_{m,n} \end{bmatrix} &=& \left\vert\begin{bmatrix} 0 & u \\ u^* & 0\end{bmatrix} \pm \begin{bmatrix} 0 & v \\ v^* & 0\end{bmatrix}\right\vert_{m+n} \\
		&=& \left\vert\begin{bmatrix} 0 & u \\ u^* & 0\end{bmatrix}\right\vert_{m+n} + \left\vert\begin{bmatrix} 0 & v \\ v^* & 0\end{bmatrix}\right\vert_{m+n} \\
		&=& \begin{bmatrix} \vert u^*\vert_{n,m} & 0 \\ 0 & \vert u\vert_{m,n} \end{bmatrix} + \begin{bmatrix} \vert v^*\vert_{n,m} & 0 \\ 0 & \vert v\vert_{m,n} \end{bmatrix}.  
	\end{eqnarray*} 
	Thus $\vert u \pm v \vert_{m,n} = \vert u \vert_{m,n} + \vert v \vert_{m,n}$ and $\vert u^* \pm v^* \vert_{n,m} = \vert u^* \vert_{n,m} + \vert v^* \vert_{n,m}$. Now tracing back the proof, we can prove the converse also.

	(2). First, we observe that 
	$$\left \vert \begin{bmatrix} u \\ 0 \end{bmatrix}^*\right \vert_{n, 2m} = \begin{bmatrix} \vert u^*\vert_{n,m} & 0 \\ 0 & 0\end{bmatrix} \perp \begin{bmatrix} 0 & 0 \\ 0 & \vert v^*\vert_{n,m} \end{bmatrix}= \left \vert \begin{bmatrix} 0 \\ v \end{bmatrix}^*\right \vert_{n, 2m}$$ 
	and that 
	$$\left \vert \begin{bmatrix} u \\ 0 \end{bmatrix}\right \vert_{2m, n}= \vert u\vert_{m,n} \perp \vert v\vert_{m,n} = \left \vert \begin{bmatrix} 0 \\ v \end{bmatrix}\right \vert_{2m, n}.$$ 
	Thus by Remark \ref{4}(2) and (1), we get 
		$$\left \vert \begin{bmatrix} u \\ v\end{bmatrix} \right \vert_{2m,n} 
		= \left \vert \begin{bmatrix} u \\ 0 \end{bmatrix}\right \vert_{2m, n} + \left \vert \begin{bmatrix} 0 \\ v \end{bmatrix}\right \vert_{2m, n} 
		= \vert u\vert_{m,n} + \vert v\vert_{m,n}.$$
	
	Similarly, we have 
	\begin{eqnarray*}
		\left \vert \begin{bmatrix} u & v \end{bmatrix}\right \vert_{m,2n} 
		&=& \left \vert \begin{bmatrix} u & 0 \end{bmatrix}\right \vert_{m, 2n} + \left \vert \begin{bmatrix} 0 & v \end{bmatrix}\right \vert_{m, 2n} \\ 
		&=& \begin{bmatrix} \vert u \vert_{m,n} & 0 \\ 0 & 0 \end{bmatrix} + \begin{bmatrix} 0 & 0 \\ 0 & \vert v \vert_{m,n} \end{bmatrix} \\
		&=& \vert u\vert_{m,n} \oplus \vert v\vert_{m,n}
	\end{eqnarray*}
	for 
	$$\left \vert \begin{bmatrix} u & 0 \end{bmatrix}\right \vert_{m, 2n} = \begin{bmatrix} \vert u\vert_{m,n} & 0 \\ 0 & 0 \end{bmatrix} \perp \begin{bmatrix}0 & 0 \\ 0 & \vert v\vert_{m,n} \end{bmatrix} = \left \vert \begin{bmatrix} 0 & v \end{bmatrix}\right \vert_{m, 2n}$$ 
	and 
	$$\left \vert \begin{bmatrix} u & 0 \end{bmatrix}^*\right \vert_{2n, m} = \vert u^* \vert_{n,m} \perp \vert v^*\vert_{n,m} = \left \vert \begin{bmatrix} 0 & v \end{bmatrix}^*\right \vert_{2n, m}.$$
\end{proof}

\subsection{Absolute matrix order unit spaces}
Let $(V, \lbrace M_n(V)^+ \rbrace)$ be a matrix ordered space. Assume that $V^+$ is \emph{proper}, (that is, $V^+ \cap ( - V^+) = \lbrace 0 \rbrace$) and that $M_n(V)^+$ is \emph{Archimedean} for all $n \in \mathbb{N}$. ($V^+$ is said to be Archimedean, if given $u \in V_{sa}$, we have $v \in V^+$ whenever $v + k u \in V^+$ for all $k > 0$.)  Then  $(V, \lbrace M_n(V)^+ \rbrace, e)$ is said to be a matrix order unit space \cite{CE77}. If there is no ambiguity, we also denote it by  $(V, e)$ or even simply by $V$. In a matrix order unit space $(V, e)$, the order unit determines a matrix norm on $V$: 
$$\Vert v \Vert_n := \inf \lbrace k > 0: \begin{bmatrix} k e^n & v \\ v^* & k e^n \end{bmatrix} \in M_{2n}(V)^+ \rbrace \quad \textrm{for} ~ v \in M_n(V).$$ 
Let $u, v \in M_n(V)_{sa}$. We say that $u$ is $\infty$-\emph{orthogonal}, (we write $u \perp_{\infty} v$), if $\Vert u + k v \Vert_n = \max \lbrace \Vert u \Vert, \Vert k v \Vert_n \rbrace$ for all $k \in \mathbb{R}$. Recall that for $u, v \in M_n(V)^+ \setminus \lbrace 0 \rbrace$, we have $u \perp_{\infty} v$ if and only if $\Vert \Vert u \Vert_n^{- 1} u + \Vert v \Vert_n^{- 1} v \Vert_n = 1$ \cite[Theorem 3.3]{K14}. For $u, v \in M_n(V)^+ \setminus \lbrace 0 \rbrace$, we say that $u \perp_{\infty}^a v$, if $u_1 \perp_{\infty} v_1$ whenever $0 \le u_1 \le u$ and $0 \le v_1 \le v$. It was proved in \cite{K16, K18} that if $A$ is a C$^*$-algebra and if $a, b \in A^+ \setminus \lbrace 0 \rbrace$, then $a \perp_{\infty}^a b$ if and only if $a b = 0$ (that is, $a$ is algebraically orthogonal to $b$). 

\begin{definition}
	Let $(V, \lbrace M_n(V)^+ \rbrace, e)$ be a matrix order unit space and assume that 
	\begin{enumerate}
		\item[(a)] $\left(V, \lbrace M_n(V)^+ \rbrace, \lbrace \vert \cdot \vert_{m,n} \rbrace \right)$ is an absolutely matrix ordered space; and
		\item[(b)]$\perp = \perp_{\infty}^a$ on $M_n(V)^+$ for all $n \in \mathbb{N}.$ 
	\end{enumerate}
	Then $(V, \lbrace M_n(V)^+ \rbrace, \lbrace \vert\cdot\vert_{m,n} \rbrace, e)$ is called an \emph{absolute matrix order unit space} \cite[Definition 4.3]{KK19}. 
\end{definition}
\begin{proposition}\label{6}
	Let $V$ be an absolutely matrix ordered space and let $v \in M_{m,n}(V)$ for some $m,n \in \mathbb{N}.$ Then $v=0$ if and only if $\vert v\vert_{m,n}=0$ and $\vert v^*\vert_{n,m}=0.$ If $M_{m+n}(V)^+$ is Archimedean, then $v=0$ if and only if $\vert v\vert_{m,n}=0.$
\end{proposition}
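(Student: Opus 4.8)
The plan is to pass from the rectangular element $v$ to the self-adjoint element $w := \begin{bmatrix} 0 & v \\ v^* & 0 \end{bmatrix} \in M_{m+n}(V)_{sa}$ and to the positive matrix furnished by Remark \ref{3}, so that everything is controlled by the absolutely ordered space structure on $M_{m+n}(V)_{sa}$. Both forward implications are then immediate: property (3) of an absolutely ordered space gives $\vert 0\vert_{m+n}=0$, so if $v=0$ then $w=0$, and Remark \ref{3}(2) yields $\vert v^*\vert_{n,m}\oplus\vert v\vert_{m,n}=\vert w\vert_{m+n}=0$; since this is a block-diagonal matrix with positive diagonal blocks, both blocks vanish. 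This disposes of the ``only if'' direction in both statements.

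For the first equivalence I would argue the converse directly and without any Archimedean assumption. Assuming $\vert v\vert_{m,n}=0=\vert v^*\vert_{n,m}$, Remark \ref{3}(2) gives $\vert w\vert_{m+n}=\vert v^*\vert_{n,m}\oplus\vert v\vert_{m,n}=0$. Property (2) then forces $\vert w\vert_{m+n}\pm w=\pm w\in M_{m+n}(V)^+$, and since the cone $M_{m+n}(V)^+$ is proper this gives $w=0$, hence $v=0$ by reading off the off-diagonal block.

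For the second equivalence I would bring in the Archimedean hypothesis through a congruence argument. First note that, by compressing with $\begin{bmatrix} I_n & 0 \end{bmatrix}$, the Archimedean property of $M_{m+n}(V)^+$ descends to $M_n(V)^+$, and both cones are proper. Now, with $\vert v\vert_{m,n}=0$, Remark \ref{3}(3) gives $X:=\begin{bmatrix} \vert v^*\vert_{n,m} & v \\ v^* & 0 \end{bmatrix}\in M_{m+n}(V)^+$. For a scalar matrix $\gamma\in M_{m,n}$ and $t\in\R$, applying condition (b) of a matrix ordered space with $\alpha_t:=\begin{bmatrix} t\gamma \\ I_n \end{bmatrix}\in M_{m+n,n}$ produces $\alpha_t^*X\alpha_t=t^2\,\gamma^*\vert v^*\vert_{n,m}\gamma+t(\gamma^*v+v^*\gamma)\in M_n(V)^+$. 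For $t>0$, dividing by $t$ gives $(\gamma^*v+v^*\gamma)+t\,\gamma^*\vert v^*\vert_{n,m}\gamma\in M_n(V)^+$ for all $t>0$, so the Archimedean property yields $\gamma^*v+v^*\gamma\in M_n(V)^+$; the same computation with $t<0$ yields $-(\gamma^*v+v^*\gamma)\in M_n(V)^+$, whence properness forces $\gamma^*v+v^*\gamma=0$ for every $\gamma$. Replacing $\gamma$ by $i\gamma$ gives $\gamma^*v-v^*\gamma=0$ as well, so $\gamma^*v=0$ for all $\gamma\in M_{m,n}$; letting $\gamma$ run through the scalar matrix units then forces every entry of $v$ to vanish, i.e. $v=0$.

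The main obstacle is exactly this last implication: extracting $v=0$ from a positive $2\times2$ block whose $(2,2)$-corner is zero, using only $\vert v\vert_{m,n}=0$. This is where the Archimedean hypothesis is indispensable (the first, symmetric statement needs no such hypothesis), and the technical heart is the quadratic-in-$t$ congruence $\alpha_t^*X\alpha_t\ge 0$ followed by the Archimedean passage to $\gamma^*v+v^*\gamma=0$. Once that identity is in hand, the complex-scalar trick and the matrix-unit extraction giving $v=0$ are routine and carry no real difficulty.
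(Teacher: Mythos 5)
Your proof is correct. For the first equivalence you follow essentially the paper's route: both arguments reduce to properness of $M_{m+n}(V)^+$ applied to $\pm\begin{bmatrix} 0 & v \\ v^* & 0\end{bmatrix}$; the paper obtains positivity of both signs from Remark \ref{3}(3), while you obtain it from Remark \ref{3}(2) together with axiom (2) of an absolutely ordered space --- a cosmetic difference. The Archimedean half is where you genuinely diverge. The paper stays at level $m+n$: it conjugates $X=\begin{bmatrix} \vert v^*\vert_{n,m} & v \\ v^* & 0\end{bmatrix}$ by the invertible scalar matrices $\begin{bmatrix} kI_m & 0 \\ 0 & k^{-1}I_n\end{bmatrix}$ to get $\begin{bmatrix} k\vert v^*\vert_{n,m} & \pm v \\ \pm v^* & 0\end{bmatrix}\in M_{m+n}(V)^+$ for all $k>0$, then applies the Archimedean property of $M_{m+n}(V)^+$ directly to conclude $\pm\begin{bmatrix}0 & v \\ v^* & 0\end{bmatrix}\in M_{m+n}(V)^+$, whence $v=0$ by properness. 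You instead compress $X$ down to level $n$ by $\begin{bmatrix} t\gamma \\ I_n\end{bmatrix}$, which requires the auxiliary observation (true, but deserving its one-line proof via the two compressions $x\mapsto x\oplus 0_m$ and back) that the Archimedean property passes from $M_{m+n}(V)^+$ to $M_n(V)^+$, and then a polarization-type argument: Archimedean plus properness give $\gamma^*v+v^*\gamma=0$ for every scalar $\gamma$, the substitution $\gamma\mapsto i\gamma$ gives $\gamma^*v-v^*\gamma=0$, and matrix units finish. Both are valid; the paper's asymmetric scaling is shorter, needs no descent of the Archimedean property, no complex-scalar trick and no entrywise extraction, whereas your version makes explicit the quadratic-form mechanism by which positivity of a block matrix with a vanishing corner annihilates the off-diagonal block, and yields the stronger intermediate identity $\gamma^*v=0$ for all scalar $\gamma\in M_{m,n}$.
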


\begin{proof}
	If $v=0$, then by the definition, we have $\vert v\vert_{m,n}=0$ and that $\vert v^* \vert_{n,m} = 0$. Conversely assume that $\vert v\vert_{m,n}=0$ and $\vert v^*\vert_{n,m}=0.$ By Remark \ref{3}(3), we deduce that $\pm \begin{bmatrix} 0 & v \\ v^* & 0 \end{bmatrix}\in M_{m+n}(V)^+.$ Since $M_{m+n}(V)^+$ is proper, we have $\begin{bmatrix} 0 & v \\ v^* & 0 \end{bmatrix}=0$ so that $v=0.$
	
	Now, we assume that $M_{m+n}(V)^+$ is Archimedean. If $\vert v\vert_{m,n}=0,$ then as above, we have $\begin{bmatrix} \vert v^*\vert_{n,m} & v \\ v^* & 0\end{bmatrix}\in M_{m+n}(V)^+.$ Let $k > 0$ be any real number and consider $\gamma = \begin{bmatrix} k I_m & 0 \\ 0 & k^{-1} I_n \end{bmatrix}$.  Then 
	$$\begin{bmatrix} k^2 \vert v^*\vert_{n,m} & v \\ v^* & 0\end{bmatrix} = \gamma^* \begin{bmatrix} \vert v^*\vert_{n,m} & v \\ v^* & 0\end{bmatrix} \gamma \in M_{m+n}(V)^+.$$ 
	It follows that $\begin{bmatrix} k \vert v^*\vert_{n,m} & \pm v \\ \pm v^* & 0\end{bmatrix} \in M_{m+n}(V)^+$ for all $k\in \mathbb{R},~k> 0.$ Since  $M_{m+n}(V)^+$ is Archimedean, we have $\pm \begin{bmatrix} 0 & v \\ v^* & 0\end{bmatrix} \in M_{m+n}(V)^+$. Now as $M_{m+n}(V)^+$ is proper, we conclude that $v=0.$ 
	\end{proof} 
\begin{remark} 
	Let $V$ be an absolute order unit space and let $v \in M_{m,n}(V)$ for some $m,n \in \mathbb{N}.$ Then $v=0$ if and only if $\vert v\vert_{m,n}=0$.
\end{remark}

\begin{proposition}
	Let $V$ be an absolute matrix order unit space and let $v \in V.$ Then $\Vert \vert v \vert \Vert = \Vert v \Vert = \Vert \vert v^* \vert \Vert.$
\end{proposition}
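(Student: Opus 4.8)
The plan is to transfer everything to the self-adjoint element
$$w := \begin{bmatrix} 0 & v \\ v^* & 0 \end{bmatrix} \in M_2(V)_{sa},$$
whose absolute value is $|w|_2 = |v^*|_{1,1} \oplus |v|_{1,1}$ by Remark \ref{3}(2). First I would check that $\Vert v \Vert = \Vert w \Vert$: since $\begin{bmatrix} ke & v \\ v^* & ke \end{bmatrix} = ke^2 + w$, and conjugation by the scalar unitary $\begin{bmatrix} 1 & 0 \\ 0 & -1 \end{bmatrix}$ (which preserves $M_2(V)^+$) carries $ke^2 + w$ to $ke^2 - w$, we have $ke^2 + w \in M_2(V)^+$ iff $-ke^2 \le w \le ke^2$; hence the infimum defining $\Vert v\Vert$ is exactly the order unit norm of the self-adjoint $w$. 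Writing $w = w^+ - w^-$ with $w^+ \perp w^-$ and $|w| = w^+ + w^-$, the axiom $\perp = \perp_\infty^a$ yields $w^+ \perp_\infty w^-$, so $\Vert w^+ \pm w^- \Vert = \max\{\Vert w^+\Vert, \Vert w^-\Vert\}$; conjugating $w^-$ by $\begin{bmatrix} 1 & 0 \\ 0 & -1 \end{bmatrix}$ gives $\Vert w^+\Vert = \Vert w^-\Vert$, whence $\Vert w\Vert = \Vert |w|\Vert$. Since $\Vert |v^*| \oplus |v|\Vert = \max\{\Vert|v^*|\Vert, \Vert|v|\Vert\}$, this produces the identity $\Vert v\Vert = \max\{\Vert|v|\Vert, \Vert|v^*|\Vert\}$.

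This identity already gives $\Vert|v|\Vert \le \Vert v\Vert$ and $\Vert|v^*|\Vert \le \Vert v\Vert$, so the whole statement reduces to the single equality $\Vert|v|\Vert = \Vert|v^*|\Vert$. To obtain it I would use Remark \ref{3}(3): the positive element $P := \begin{bmatrix} |v^*| & v \\ v^* & |v| \end{bmatrix}$ is exactly $2w^+$, so by the computation above $\Vert P\Vert = 2\Vert w^+\Vert = 2\max\{\Vert|v^*|\Vert, \Vert|v|\Vert\}$. On the other hand, for this positive $2 \times 2$ block matrix one has the sharp bound $\Vert P\Vert \le \Vert|v^*|\Vert + \Vert|v|\Vert$. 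Combining the two, $2\max\{\Vert|v^*|\Vert,\Vert|v|\Vert\} \le \Vert|v^*|\Vert + \Vert|v|\Vert = \max\{\Vert|v^*|\Vert,\Vert|v|\Vert\} + \min\{\Vert|v^*|\Vert,\Vert|v|\Vert\}$, which forces $\max\{\Vert|v^*|\Vert,\Vert|v|\Vert\} \le \min\{\Vert|v^*|\Vert,\Vert|v|\Vert\}$, i.e. $\Vert|v|\Vert = \Vert|v^*|\Vert$. Together with the identity of the first paragraph this gives $\Vert|v|\Vert = \Vert v\Vert = \Vert|v^*|\Vert$.

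The norm bookkeeping (that $\Vert v\Vert = \Vert w\Vert$, that unitary conjugation is isometric, that $\Vert a \oplus b\Vert = \max\{\Vert a\Vert,\Vert b\Vert\}$, and that a positive element carries the order unit norm) is routine. The one genuinely non-trivial ingredient, and the step I expect to be the main obstacle, is the sharp inequality $\Vert P\Vert \le \Vert|v^*|\Vert + \Vert|v|\Vert$. The cheap estimates are useless here: $P \le 2(|v^*|\oplus|v|)$ only yields $\Vert P\Vert \le 2\max\{\Vert|v^*|\Vert,\Vert|v|\Vert\}$, and the triangle inequality becomes circular once $\Vert v\Vert = \max\{\Vert|v|\Vert,\Vert|v^*|\Vert\}$ is known. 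The sharp bound is the Cauchy--Schwarz estimate for the off-diagonal of a positive block matrix, equivalently $\Vert v\Vert \le \sqrt{\Vert|v^*|\Vert\,\Vert|v|\Vert}$ (which, fed into the first paragraph's identity, gives $\max \le \sqrt{\max\cdot\min}$ and hence the same conclusion). It is available because a matrix order unit space is, by the Choi--Effros theorem, unitally completely isometric to an operator system, so that $P$ is a genuine positive operator and $\langle P\xi,\xi\rangle \le \big(\langle|v^*|\xi_1,\xi_1\rangle^{1/2} + \langle|v|\xi_2,\xi_2\rangle^{1/2}\big)^2 \le \Vert|v^*|\Vert + \Vert|v|\Vert$ for every unit vector $\xi = (\xi_1,\xi_2)$. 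I would therefore isolate this inequality as a preliminary lemma (or cite it) and feed it into the dichotomy above.
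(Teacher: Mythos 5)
Your proposal is correct, and its first half coincides with the paper's: both pass to the self-adjoint element $w=\begin{bmatrix} 0 & v\\ v^* & 0\end{bmatrix}$, invoke Remark \ref{3}(2), and arrive at $\Vert v\Vert=\max\{\Vert \vert v\vert \Vert,\Vert \vert v^*\vert \Vert\}$; the paper simply asserts this chain of equalities, and your justification via the axiom $\perp=\perp_\infty^a$ is the right way to fill it in (the detour through $\Vert w^+\Vert=\Vert w^-\Vert$ is sound but not needed for that step, since $\Vert w^+\pm w^-\Vert$ both equal $\max\{\Vert w^+\Vert,\Vert w^-\Vert\}$). Where you genuinely diverge is the sharp estimate you flag as the main obstacle. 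The paper obtains it \emph{internally}: exactly as in the proof of Proposition \ref{6}, conjugating the positive matrix $\begin{bmatrix}\vert v^*\vert & v\\ v^* & \vert v\vert\end{bmatrix}$ of Remark \ref{3}(3) by $\gamma=\begin{bmatrix} k^{1/2} & 0\\ 0 & k^{-1/2}\end{bmatrix}$ shows that $\begin{bmatrix} k\vert v^*\vert & \pm v\\ \pm v^* & k^{-1}\vert v\vert\end{bmatrix}\in M_2(V)^+$ for all $k>0$; taking $k=\sqrt{\Vert \vert v\vert \Vert/\Vert \vert v^*\vert \Vert}$ and bounding both diagonal blocks by $le$ with $l=\sqrt{\Vert \vert v\vert \Vert\,\Vert \vert v^*\vert \Vert}$ yields $\Vert v\Vert\le\sqrt{\Vert \vert v\vert \Vert\,\Vert \vert v^*\vert \Vert}$, and then $\max=\sqrt{\max\cdot\min}$ forces $\Vert \vert v\vert \Vert=\Vert \vert v^*\vert \Vert$. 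This diagonal-scaling trick, which needs only the cone-compatibility axiom $\gamma^*M_2(V)^+\gamma\subseteq M_2(V)^+$, is precisely the abstract, order-theoretic form of the Cauchy--Schwarz inequality you wanted, so the obstacle has a solution inside the axioms --- indeed one already displayed earlier in the paper. Your route instead outsources it to the Choi--Effros theorem: embed $V$ unitally and completely order-isomorphically (hence completely isometrically) into some $B(H)$, note that the Hilbert-space estimate uses only positivity of the block matrix and the norms of its diagonal entries (so it is harmless that the abstract $\vert v\vert$ need not coincide with $(v^*v)^{1/2}$ under the embedding), and conclude $\Vert P\Vert\le\Vert \vert v^*\vert \Vert+\Vert \vert v\vert \Vert$, i.e.\ $2\max\le\max+\min$. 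This is legitimate --- \cite{CE77} is available, and your bookkeeping $P=2w^+$, $\Vert P\Vert=2\max\{\Vert \vert v^*\vert \Vert,\Vert \vert v\vert \Vert\}$ is correct --- and it even avoids the case distinction $v\neq 0$ that the paper needs in order to divide by $\Vert \vert v^*\vert \Vert$; but it costs self-containedness, which matters here because the paper's program is precisely to prove such facts order-theoretically without passing through a concrete representation. If you want your write-up in that spirit, replace the appeal to Choi--Effros by the $\gamma$-conjugation above and finish with your own parenthetical observation that $\Vert v\Vert\le\sqrt{\Vert \vert v^*\vert \Vert\,\Vert \vert v\vert \Vert}$ gives the same dichotomy.
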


\begin{proof}
	If $v=0,$ then $\vert v \vert = 0$ and $\vert v^* \vert = 0$ so that the result holds trivially. Thus we may assume that $v \neq 0$. Then $\vert v \vert \neq 0$ and $\vert v^* \vert \neq 0$. As
	\begin{eqnarray*}
		\Vert v\Vert &=& \left \Vert \begin{bmatrix} 0 & v \\ v^* & 0 \end{bmatrix}\right \Vert_2  
		= \left \Vert \left \vert \begin{bmatrix} 0 & v \\ v^* & 0 \end{bmatrix}\right \vert_2 \right \Vert_2 
		= \left \Vert \begin{bmatrix} \vert v^*\vert & 0 \\ 0 & \vert v\vert \end{bmatrix} \right \Vert_2 \\ 
		&=& max \lbrace \Vert \vert v\vert \Vert, \Vert \vert v^*\vert \Vert \rbrace,
	\end{eqnarray*}
	we have $\Vert \vert v\vert \Vert, \Vert \vert v^*\vert \Vert \leq \Vert v\Vert.$ Now, as in the proof of Proposition \ref{6}, we may deduce that $\begin{bmatrix} k \vert v^*\vert & \pm v \\ \pm v^* & k^{-1} \vert v\vert \end{bmatrix} \in M_2(V)^+$ for all $k \in \mathbb{R}, k > 0.$ Put $k = \sqrt{\frac{\Vert \vert v\vert \Vert}{\Vert \vert v^*\vert \Vert}}$. Since $\vert v\vert \leq \Vert \vert v \vert \Vert e$ and since $\vert v^*\vert \leq \Vert \vert v^* \vert \Vert e$, we may further conclude that $\begin{bmatrix} l e & \pm v \\ \pm v^* & l e \end{bmatrix} \in M_2(V)^+$ where $l = \sqrt{\Vert \vert v\vert \Vert \Vert \vert v^*\vert \Vert}$. Thus $\Vert v\Vert \leq \sqrt{\Vert \vert v\vert \Vert \Vert \vert v^*\vert \Vert}$. Now 
	$$\sqrt{\Vert \vert v\vert \Vert \Vert \vert v^*\vert \Vert} \leq max \lbrace \Vert \vert v\vert \Vert,\Vert \vert v^*\vert \Vert \rbrace = \Vert v\Vert$$ 
	so that $\Vert v\Vert = \sqrt{\Vert \vert v\vert \Vert \Vert \vert v^*\vert \Vert}.$ Hence $\Vert \vert v \vert \Vert = \Vert v \Vert = \Vert \vert v^* \vert \Vert$.
\end{proof}

\section{Partial isometry}
C$^*$-algebras are the primary source of examples for absolute matrix order unit spaces. One of the aims of this paper is to generalize and study certain C$^*$-algebraic notions in the order theoretic context. 
\begin{definition}
Let $(V,e)$ be an absolute matrix order unit space and let $v \in M_n(V)$ for some $n\in \mathbb{N}.$ 
\begin{enumerate}
	\item $v$ is said to be \emph{normal}, if $\vert v \vert_n = \vert v^* \vert_n$; 
	\item $v$ is said to be a \emph{unitary}, if $\vert v \vert_n = \vert v^* \vert_n = e^n$; 
	\item $v$ is said to be a \emph{symmetry}, if $v^* = v$ and $\vert v \vert_n = e^n$;
	\item $v$ is said to be an \emph{order projection}, if $v^* = v$ and $\vert 2 v - e^n \vert_n = e^n$; 
	\item $v$ is said to be a \emph{partial unitary}, if $\vert v \vert_n = \vert v^* \vert_n$ is an order projection; 
	\item $v$ is said to be a \emph{partial symmetry}, if $v^* = v$ and $\vert v \vert_n$ is an order projection; 
	
	Now let $v \in M_{m,n}(V)$ for some $m, n \in \mathbb{N}$ .
	\item $v$ is said to be a \emph{partial isometry}, if $\vert v \vert_{m,n}$ and $\vert v^* \vert_{n,m}$ are order projections;
	\item $v$ is said to be an \emph{isometry}, if $v$ is a partial isometry and $\vert v \vert_{m,n} = e^n$; 
	\item $v$ is said to be a \emph{co-isometry}, if $v$ is a partial isometry and $\vert v^* \vert_{n,m} = e^m$.
\end{enumerate} 
\end{definition}
In this paper, we shall keep our focus on partial isometries, vis-a-vis on order projections. The set of all partial symmetries in $M_{m,n}(V)$ will be denoted by $\mathcal{PI}_{m,n}(V)$ and the set of all order projections in $M_n(V)$ will be denoted by $\mathcal{OP}_n(V)$. For $m = n$, we write $\mathcal{PI}_{m,n}(V) = \mathcal{PI}_n(V)$. For $n = 1$, we shall write $\mathcal{PI}(V)$ for $\mathcal{PI}_1(V)$ and $\mathcal{OP}(V)$ for $\mathcal{OP}_1(V)$. 

The first author introduced and studied order projections defined in an absolute order unit space \cite{K18}. As partial isometries are defined in terms of order projections, the following properties of order projections will be helpful in the study of partial isometries and order projections in the matricial context. 
\begin{remark}\label{7} \cite{K18} 
	Let $(V,V^+,|\cdot|,e)$ be an absolute order unit space. 
	\begin{enumerate}
		\item Then $\mathcal{OP}(V)=\lbrace p\in [0,e]:p\perp (e-p)\rbrace.$ 
		\item For $p,q \in \mathcal{OP}(V),$ the following statements are equivalent:
		\begin{enumerate}
			\item $p+q\leq e;$
			\item $p\perp q;$
			\item $p+q \in \mathcal{OP}(V),$ and
			\item $p\perp_\infty q.$
		\end{enumerate} 
		\item Let $0\leq u,v \leq e.$ If $u+v \in \mathcal{OP}(V)$ with $u \perp v,$ then $u,v \in \mathcal{OP}(V).$
	\end{enumerate} 
	These statements remain valid in an absolute matrix order unit space $V$, if we replace $\mathcal{OP}(V)$ by $\mathcal{OP}_n(V)$ for any $n \in \mathbb{N}$. 
\end{remark} 
\begin{remark}\label{8}
	Let $V$ be an absolute matrix order unit space and let $m,n \in \mathbb{N}$. 
	\begin{enumerate}
		\item It follows from Proposition \ref{5}(2) that if $u,v \in \mathcal{PI}_{m,n}(V)$ with $u \perp v,$ then $\begin{bmatrix}u \\ v\end{bmatrix} \in \mathcal{PI}_{2m,n}(V)$ and $\begin{bmatrix}u & v\end{bmatrix} \in \mathcal{PI}_{m,2n}(V).$
		\item Let $v \in \mathcal{PI}_{m,n}(V)$. Then $\begin{bmatrix} v & 0\end{bmatrix} \in \mathcal{PI}_m(V)$, if $m > n$ and $\begin{bmatrix} v \\ 0\end{bmatrix} \in \mathcal{PI}_n(V)$, if $m < n$.
		\item Let $v \in M_{m,n}(V).$ Then the following facts are equivalent:
		\begin{enumerate}
			\item $v \in \mathcal{PI}_{m,n}(V)$;
			\item $\begin{bmatrix} v^* & 0 \\ 0 & v \end{bmatrix}\in \mathcal{PI}_{m+n}(V)$; and 
			\item $\begin{bmatrix}0 & v \\ v^* & 0\end{bmatrix} \in \mathcal{PI}_{m+n}(V)$. (That is, $\begin{bmatrix}0 & v \\ v^* & 0\end{bmatrix}$ is a partial symmetry.)
		\end{enumerate}
		In fact, we have 
		$$\left \vert \begin{bmatrix} 0 & v \\ v^* & 0\end{bmatrix}\right \vert_{m+n} = \begin{bmatrix} \vert v^*\vert_{n,m} & 0 \\ 0 & \vert v\vert_{m,n}\end{bmatrix} = \left \vert\begin{bmatrix}v^* & 0 \\ 0 & v\end{bmatrix}\right \vert_{m+n}.$$ 
	\end{enumerate}
\end{remark}

\begin{proposition} \label{9}
	Let $V$ be an absolute matrix order unit space and let $v\in M_n(V)_{sa}$ for some $n\in \mathbb{N}.$ Then $v\in \mathcal{PI}_n(V)$ (that is, $v$ is a partial symmetry) if, and only if, $v^\pm \in \mathcal{OP}_{n}(V).$ In particular, if $v \in M_{m,n}(V)$ for some $m,n \in \mathbb{N},$ then $v \in \mathcal{PI}_{m,n}(V)$ if, and only if, $\begin{bmatrix} 0 & v \\ v^* & 0\end{bmatrix}^\pm \in \mathcal{OP}_{m+n}(V).$ 
\end{proposition}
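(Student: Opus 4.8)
The plan is to read off both implications directly from the descriptions of order projections recorded in Remark \ref{7}, after invoking the orthogonal decomposition $v = v^+ - v^-$. Since $v \in M_n(V)_{sa}$ and $M_n(V)_{sa}$ is an absolutely ordered space, the pair $v^+ = \tfrac12(\vert v\vert_n + v)$, $v^- = \tfrac12(\vert v\vert_n - v)$ lies in $M_n(V)^+$, satisfies $v^+ \perp v^-$, and gives $\vert v\vert_n = v^+ + v^-$ (as recalled in the discussion preceding Proposition \ref{1}). Moreover $\vert v\vert_n - v^\pm = v^\mp \ge 0$, so $0 \le v^\pm \le \vert v\vert_n$. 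By definition, the statement ``$v$ is a partial symmetry'' is precisely $\vert v\vert_n \in \mathcal{OP}_n(V)$, so the whole equivalence reduces to relating $\vert v\vert_n \in \mathcal{OP}_n(V)$ with $v^\pm \in \mathcal{OP}_n(V)$, the orthogonality $v^+ \perp v^-$ being already in hand.

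For the forward implication I would assume $\vert v\vert_n \in \mathcal{OP}_n(V)$. Then by the matrix form of Remark \ref{7}(1) we have $\vert v\vert_n \le e^n$, whence $0 \le v^\pm \le \vert v\vert_n \le e^n$; combining this with $v^+ + v^- = \vert v\vert_n \in \mathcal{OP}_n(V)$ and $v^+ \perp v^-$, the matrix version of Remark \ref{7}(3) yields $v^+, v^- \in \mathcal{OP}_n(V)$. For the converse I would assume $v^+, v^- \in \mathcal{OP}_n(V)$; since $v^+ \perp v^-$, the implication $(b)\Rightarrow(c)$ in the matrix version of Remark \ref{7}(2) gives $\vert v\vert_n = v^+ + v^- \in \mathcal{OP}_n(V)$, i.e.\ $v$ is a partial symmetry. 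This settles the equivalence for self-adjoint $v \in M_n(V)$.

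For the ``in particular'' clause with $v \in M_{m,n}(V)$, the plan is to pass to the self-adjoint element $w := \begin{bmatrix} 0 & v \\ v^* & 0 \end{bmatrix} \in M_{m+n}(V)_{sa}$. By Remark \ref{8}(3), $v \in \mathcal{PI}_{m,n}(V)$ if and only if $w \in \mathcal{PI}_{m+n}(V)$, that is, $w$ is a partial symmetry. Applying the first part of the proposition to $w$ (with $m+n$ in place of $n$) shows that $w$ is a partial symmetry if and only if $w^\pm \in \mathcal{OP}_{m+n}(V)$, which is exactly the asserted condition $\begin{bmatrix} 0 & v \\ v^* & 0 \end{bmatrix}^\pm \in \mathcal{OP}_{m+n}(V)$.

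I do not anticipate a genuine obstacle: the argument is a bookkeeping reduction to the two characterizations of order projections. The only points that require care are verifying the bound $0 \le v^\pm \le e^n$ needed to invoke Remark \ref{7}(3) (which hinges on $\vert v\vert_n \le e^n$, available once $\vert v\vert_n$ is an order projection), and citing the correct matrix-level versions of the facts in Remark \ref{7} rather than their scalar forms.
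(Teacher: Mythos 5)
Your proposal is correct and follows essentially the same route as the paper's proof: orthogonal decomposition $v = v^+ - v^-$ with $\vert v\vert_n = v^+ + v^-$, Remark \ref{7}(3) for the forward direction, Remark \ref{7}(2) for the converse, and Remark \ref{8}(3) to reduce the rectangular case to the self-adjoint one. In fact you are slightly more careful than the paper, which invokes Remark \ref{7}(3) without explicitly verifying the hypothesis $0 \le v^\pm \le e^n$ that you derive from $\vert v\vert_n \le e^n$.
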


\begin{proof}
	Since $v \in M_n(V)_{sa},$ let $v = v^+ - v^-$ be orthogonal decomposition of $v$ in $M_n(V)^+$. Thus $\vert v\vert = v^+ + v^-$. Now first suppose that $v \in \mathcal{PI}_n(V).$ Then  $\vert v\vert \in \mathcal{OP}_n(V),$ therefore by Remark \ref{7}(3), we get that $v^\pm \in \mathcal{OP}_n(V)$ as $v^+ \perp v^-$.
	
	Conversely suppose that $v^\pm \in \mathcal{OP}_n(V).$ Again by Remark \ref{7}(2), we get that $\vert v\vert = v^+ + v^- \in \mathcal{OP}_n(V)$ so that $v \in \mathcal{PI}_n(V).$
	
	Now, let $v \in M_{m,n}(V)$ for some $m,n \in \mathbb{N}$. Then $\begin{bmatrix}0 & v \\ v^* & 0\end{bmatrix} \in M_{m+n}(V)_{sa}$. By Remark \ref{8}(3), we have that $v \in \mathcal{PI}_{m,n}(V)$ if, and only if, $\begin{bmatrix}0 & v \\ v^* & 0\end{bmatrix} \in \mathcal{PI}_{m+n}(V)$. Hence $v \in \mathcal{PI}_{m,n}(V)$ if, and only if, $\begin{bmatrix} 0 & v \\ v^* & 0\end{bmatrix}^\pm \in \mathcal{OP}_{m+n}(V).$  
\end{proof}
\begin{corollary}\label{10}
	Let $V$ be an absolute matrix order unit space and let $v_1, \dots, v_k \in M_{m,n}(V)$ be mutually orthogonal vectors for some $k,m,n \in \mathbb{N}$. Then $v_1, \dots, v_k \in \mathcal{PI}_{m,n}(V)$ if, and only if, $\displaystyle\sum_{i=1}^{k}v_i \in \mathcal{PI}_{m,n}(V).$ 
\end{corollary}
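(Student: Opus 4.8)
The plan is to reduce the statement to the self-adjoint case and then argue entirely at the level of order projections via Proposition \ref{9}. First I would pass from $v_1,\dots,v_k\in M_{m,n}(V)$ to the self-adjoint elements $w_i:=\begin{bmatrix}0 & v_i\\ v_i^* & 0\end{bmatrix}\in M_{m+n}(V)_{sa}$. By Definition \ref{2}, the mutual orthogonality of the $v_i$ is \emph{by definition} the mutual orthogonality of the $w_i$ in $M_{m+n}(V)_{sa}$; moreover $\sum_i w_i=\begin{bmatrix}0 & \sum_i v_i\\ \sum_i v_i^* & 0\end{bmatrix}$. By Remark \ref{8}(3), $v_i\in\mathcal{PI}_{m,n}(V)$ if and only if $w_i\in\mathcal{PI}_{m+n}(V)$, and likewise $\sum_i v_i\in\mathcal{PI}_{m,n}(V)$ if and only if $\sum_i w_i\in\mathcal{PI}_{m+n}(V)$. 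Hence it suffices to prove the corollary for mutually orthogonal self-adjoint elements $w_1,\dots,w_k\in M_N(V)_{sa}$ with $N=m+n$, where $w_i\in\mathcal{PI}_N(V)$ means that $w_i$ is a partial symmetry.

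Next I would decompose each $w_i=w_i^+-w_i^-$. By Proposition \ref{1}, the orthogonality of $w_i$ and $w_j$ (for $i\neq j$) forces $w_i^+,w_i^-,w_j^+,w_j^-$ to be mutually orthogonal, so together with $w_i^+\perp w_i^-$ the whole family $\{w_i^+,w_i^-\}_{i=1}^k$ consists of mutually orthogonal positive elements. The technical lemma I would isolate at the outset is that orthogonality is stable under finite sums: if $x,y_1,\dots,y_r\in M_N(V)^+$ with $x\perp y_j$ for all $j$, then $x\perp\sum_j y_j$. This follows by induction from the defining axiom that $x\perp y$ and $x\perp z$ imply $x\perp|y\pm z|$, since for positive elements $|y_1+\cdots+y_r|=y_1+\cdots+y_r$. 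Using this, set $P:=\sum_i w_i^+$ and $Q:=\sum_i w_i^-$; then $P\perp Q$, and since $\sum_i w_i=P-Q$ with $P,Q\in M_N(V)^+$, the uniqueness of the orthogonal decomposition gives $(\sum_i w_i)^+=P$ and $(\sum_i w_i)^-=Q$.

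With this bookkeeping in place, both implications are short. By Proposition \ref{9}, $w_i\in\mathcal{PI}_N(V)$ if and only if $w_i^+,w_i^-\in\mathcal{OP}_N(V)$, and $\sum_i w_i\in\mathcal{PI}_N(V)$ if and only if $P,Q\in\mathcal{OP}_N(V)$. For the forward direction, if every $w_i^{\pm}$ is an order projection, then $P=\sum_i w_i^+$ is a sum of mutually orthogonal order projections, so iterating Remark \ref{7}(2) (using the stability lemma to keep each partial sum orthogonal to the next summand) yields $P\in\mathcal{OP}_N(V)$, and similarly $Q\in\mathcal{OP}_N(V)$; hence $\sum_i w_i\in\mathcal{PI}_N(V)$. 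For the converse, suppose $P,Q\in\mathcal{OP}_N(V)$. Fixing $i$, write $P=w_i^++\sum_{j\neq i}w_j^+$; here $w_i^+\perp\sum_{j\neq i}w_j^+$ by the stability lemma, and $0\le w_i^+,\ \sum_{j\neq i}w_j^+\le P\le e^N$ because $P\in\mathcal{OP}_N(V)\subseteq[0,e^N]$. Remark \ref{7}(3) then gives $w_i^+\in\mathcal{OP}_N(V)$, and the same argument with $Q$ gives $w_i^-\in\mathcal{OP}_N(V)$, so $w_i\in\mathcal{PI}_N(V)$. Translating back through Remark \ref{8}(3) completes the proof.

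I expect the main obstacle to be the two ``glue'' facts rather than the case analysis: establishing stability of orthogonality under finite sums from the last absolute-value axiom, and verifying that the positive and negative parts of $\sum_i w_i$ are precisely $P$ and $Q$ by uniqueness of the orthogonal decomposition. Once these are secured, the equivalence is a direct application of Proposition \ref{9} and Remark \ref{7}.
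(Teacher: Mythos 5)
Your proposal is correct and follows essentially the same route as the paper's proof: pass to the self-adjoint elements $\begin{bmatrix} 0 & v_i \\ v_i^* & 0\end{bmatrix}$, use Proposition \ref{1} to see that their positive and negative parts form a mutually orthogonal family (so the sum's orthogonal decomposition is the sum of the decompositions), and then conclude via Remark \ref{7}, Remark \ref{8}(3) and Proposition \ref{9}. The only difference is that you spell out the ``glue'' steps (stability of orthogonality under finite sums, uniqueness of the orthogonal decomposition, and the separate uses of Remark \ref{7}(2) and \ref{7}(3) for the two directions) which the paper's terse proof leaves implicit.
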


\begin{proof}
	Put $w_i = \begin{bmatrix} 0 & v_i \\ v_i^* & 0\end{bmatrix}$ for all $i=1,2,\cdots,k$. By Proposition \ref{1}, we get that $\left \lbrace \begin{bmatrix} 0 & v_i \\ v_i^* & 0\end{bmatrix}^{\pm}: i = 1,2, \cdots, k \right \rbrace$ is an orthogonal set so that $\left(\mathlarger{\mathlarger{\mathlarger{\sum}}}_{i=1}^{k} \begin{bmatrix} 0 & v_i \\ v_i^* & 0 \end{bmatrix}\right)^+ = \mathlarger{\mathlarger{\mathlarger{\sum}}}_{i=1}^{k} \begin{bmatrix} 0 & v_i \\ v_i^* & 0 \end{bmatrix}^+$ and $\left(\mathlarger{\mathlarger{\mathlarger{\sum}}}_{i=1}^{k} \begin{bmatrix} 0 & v_i \\ v_i^* & 0 \end{bmatrix}\right)^- = \mathlarger{\mathlarger{\mathlarger{\sum}}}_{i=1}^{k} \begin{bmatrix} 0 & v_i \\ v_i^* & 0 \end{bmatrix}^-$. Then by Remarks \ref{7}, \ref{8}(3) and by Proposition \ref{9}, it follows that $v_1, \dots, v_k \in \mathcal{PI}_{m,n}(V)$ if, and only if, $\displaystyle\sum_{i=1}^{k}v_i \in \mathcal{PI}_{m,n}(V).$ 
\end{proof}
Now, we prove some matricial properties of order projections.
\begin{proposition}\label{11}
	Let $V$ be an absolute matrix order unit space and let $m,n \in \mathbb{N}$. Then $p\in \mathcal{OP}_m(V),q\in \mathcal{OP}_n(V)$ if, and only if, $p \oplus q \in \mathcal{OP}_{m+n}(V).$
\end{proposition}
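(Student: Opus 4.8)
The plan is to reduce everything to the defining identity for an order projection, namely $r^* = r$ and $\vert 2r - e^k \vert_k = e^k$ for $r \in M_k(V)_{sa}$, combined with the direct-sum compatibility of the absolute value provided by condition $(3)$ of Definition \ref{2}. The key observation is that the matrix order unit splits as $e^{m+n} = e^m \oplus e^n$, so that
\[
2(p \oplus q) - e^{m+n} = (2p - e^m) \oplus (2q - e^n).
\]
Applying Definition \ref{2}(3) with $v = 2p - e^m \in M_m(V)_{sa}$ and $w = 2q - e^n \in M_n(V)_{sa}$ then gives the single governing identity
\[
\bigl\vert 2(p \oplus q) - e^{m+n} \bigr\vert_{m+n} = \vert 2p - e^m \vert_m \oplus \vert 2q - e^n \vert_n .
\]

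First I would dispose of the self-adjointness requirement: since the $*$-operation is computed blockwise, $(p \oplus q)^* = p^* \oplus q^*$, so $p \oplus q$ is self-adjoint if and only if both $p$ and $q$ are. For the forward implication I would assume $p \in \mathcal{OP}_m(V)$ and $q \in \mathcal{OP}_n(V)$, substitute $\vert 2p - e^m \vert_m = e^m$ and $\vert 2q - e^n \vert_n = e^n$ into the displayed identity, and read off $\bigl\vert 2(p\oplus q) - e^{m+n}\bigr\vert_{m+n} = e^m \oplus e^n = e^{m+n}$, giving $p \oplus q \in \mathcal{OP}_{m+n}(V)$. For the converse, from $p \oplus q \in \mathcal{OP}_{m+n}(V)$ the same identity yields $\vert 2p - e^m\vert_m \oplus \vert 2q - e^n\vert_n = e^m \oplus e^n$, and comparing the two diagonal corners of this direct-sum equality forces $\vert 2p - e^m\vert_m = e^m$ and $\vert 2q - e^n\vert_n = e^n$.

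The only step requiring any care is this last one, extracting the summands from an equality of direct sums; it is immediate because a direct sum is by definition the block-diagonal matrix, so its diagonal corners recover the summands uniquely. Consequently I expect no genuine obstacle: the whole content of the proposition is that both the absolute value and the order unit are compatible with direct sums, which is exactly what the structure axioms guarantee. As an alternative route that avoids the block comparison entirely, one may instead invoke the characterization $\mathcal{OP}_k(V) = \{r \in [0,e^k] : r \perp (e^k - r)\}$ of Remark \ref{7}(1): since $0 \le p \oplus q \le e^{m+n}$ is equivalent to $0 \le p \le e^m$ and $0 \le q \le e^n$ (by compressing with the coordinate embeddings and, conversely, taking direct sums of positives), and since $(p\oplus q) \perp \bigl(e^{m+n} - (p\oplus q)\bigr) = (e^m - p)\oplus(e^n - q)$ is equivalent by Remark \ref{4}(1) to $p \perp (e^m - p)$ together with $q \perp (e^n - q)$, the desired equivalence drops out at once.
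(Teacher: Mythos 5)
Your proof is correct and takes essentially the same route as the paper's: both hinge on the identity $\vert 2(p\oplus q)-e^{m+n}\vert_{m+n} = \vert 2p-e^m\vert_m \oplus \vert 2q-e^n\vert_n$ coming from Definition \ref{2}(3), with each implication read off by comparing diagonal blocks (the paper writes out one direction and obtains the other by ``tracing back''). Your alternative argument via Remark \ref{7}(1) and Remark \ref{4}(1) is also valid but unnecessary.
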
 
\begin{proof}
	Let $p\oplus q\in OP_{m+n}(V)$. Then 
	$$e^{m+n} = \vert 2(p \oplus q) - e^{m+n} \vert_{m+n} = \vert 2 p - e^m \vert_m \oplus \vert 2 q - e^n \vert_n.$$ 
	Thus $\vert 2 p - e^m \vert_m = e^m$ and $\vert 2 q - e^n \vert_n = e^n$ so that $p\in \mathcal{OP}_m(V)$ and $q\in \mathcal{OP}_n(V)$. Now tracing back the proof, we can prove the converse part. 
\end{proof}

\begin{lemma}\label{12}
	Let $V$ be an absolutely matrix ordered space and let $n\in \mathbb{N}.$ Then $\vert \alpha^*v\alpha\vert_n=\alpha^*\vert v\vert_n\alpha$, if $v\in M_n(V)$ and $\alpha \in M_n$ is a unitary.
\end{lemma}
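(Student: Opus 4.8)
The plan is to establish the two inequalities $\vert\alpha^* v\alpha\vert_n \le \alpha^*\vert v\vert_n\alpha$ and $\alpha^*\vert v\vert_n\alpha \le \vert\alpha^* v\alpha\vert_n$ and then to invoke the fact that $M_n(V)^+$ is a proper cone (each $(M_n(V)_{sa},M_n(V)^+,\vert\cdot\vert_n)$ being an absolutely ordered space) so that the order on $M_n(V)_{sa}$ is antisymmetric and the two bounds force equality.

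The heart of the argument is a right-multiplication identity: for every $w\in M_n(V)^+$ one has $\bigl\vert w\alpha\bigr\vert_n = \alpha^* w\alpha$. To prove it, I would use $\alpha\alpha^* = I_n$ to factor $w\alpha = \alpha(\alpha^* w\alpha)$, observe that $p := \alpha^* w\alpha \in M_n(V)^+$ by the defining property $\beta^* x\beta\in M_m(V)^+$ of a matrix ordered space, and then apply Remark \ref{3}(1) with the isometry $\alpha$ (note $\alpha^*\alpha = I_n$) to get $\vert\alpha p\vert_n = \vert p\vert_n$; since $p\ge 0$ we have $\vert p\vert_n = p$, so $\bigl\vert w\alpha\bigr\vert_n = \alpha^* w\alpha$.

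With this in hand I would first record the general upper bound: for any unitary $\beta\in M_n$ and any $w\in M_n(V)$, $\vert\beta^* w\beta\vert_n \le \beta^*\vert w\vert_n\beta$. Indeed, applying the structural inequality of Definition \ref{2}(2) with left multiplier $\beta^*$, element $w$ and right multiplier $\beta$, and using $\Vert\beta^*\Vert = 1$, gives $\vert\beta^* w\beta\vert_n \le \bigl\vert\,\vert w\vert_n\,\beta\,\bigr\vert_n$, and the identity of the previous paragraph (with $w$ replaced by $\vert w\vert_n\in M_n(V)^+$) turns the right-hand side into $\beta^*\vert w\vert_n\beta$. Applying this with $(\beta,w)=(\alpha,v)$ yields the desired upper bound $\vert\alpha^* v\alpha\vert_n \le \alpha^*\vert v\vert_n\alpha$. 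For the reverse inequality I would apply the same bound with $(\beta,w)=(\alpha^*,\alpha^* v\alpha)$; since $\alpha(\alpha^* v\alpha)\alpha^* = v$ this reads $\vert v\vert_n \le \alpha\vert\alpha^* v\alpha\vert_n\alpha^*$, and conjugating both sides by $\alpha^*$ on the left and $\alpha$ on the right (which preserves order and uses $\alpha^*\alpha = I_n$) gives $\alpha^*\vert v\vert_n\alpha \le \vert\alpha^* v\alpha\vert_n$.

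The main obstacle is that the inequality in Definition \ref{2}(2) only extracts the \emph{norm} of the left multiplier while keeping the right multiplier inside the absolute value, so it cannot by itself pull the unitary $\alpha$ out into conjugation form. The factorization $w\alpha = \alpha(\alpha^* w\alpha)$ combined with the isometry-invariance of Remark \ref{3}(1) is exactly what converts right multiplication by a unitary into an outer conjugation, and I expect this step to require the most care; everything else is a symmetric application of the resulting bound and the antisymmetry of the proper cone.
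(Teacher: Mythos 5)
Your proposal is correct and takes essentially the same route as the paper's proof: both rest on the structural inequality of Definition \ref{2}(2) with the left multiplier's norm pulled out, the factorization $\vert v\vert_n\alpha=\alpha(\alpha^*\vert v\vert_n\alpha)$ together with isometry-invariance to obtain $\vert\alpha^* v\alpha\vert_n\le\alpha^*\vert v\vert_n\alpha$, then a substitution interchanging the roles of $\alpha$ and $\alpha^*$ (the paper replaces $v$ by $\alpha v\alpha^*$; you apply the bound to $(\alpha^*,\alpha^* v\alpha)$, which is the same maneuver) and conjugation to reverse the inequality, finished by properness of the cone. The only difference is presentational: you isolate the identity $\vert w\alpha\vert_n=\alpha^* w\alpha$ for $w\ge 0$ as a named intermediate step via Remark \ref{3}(1), whereas the paper runs the same estimates in a single inline chain.
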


\begin{proof}
	By \ref{2}(2), we have 
	$$\vert \alpha^* v \alpha \vert_n \leq \Vert \alpha^* \Vert \vert \vert v \vert_n \alpha \vert_n = \vert \vert v  \vert_n \alpha \vert_n = \vert \alpha (\alpha^* \vert v \vert_n \alpha) \vert_n \leq \alpha^* \vert v \vert_n \alpha.$$
	Thus $\vert \alpha^*v\alpha \vert_n\leq \alpha^*\vert v\vert_n \alpha.$ Now replacing $v$ by $\alpha v\alpha^*,$ we get 
	$\vert v\vert_n\leq \alpha^*\vert \alpha v\alpha^*\vert_n\alpha$ so that $\alpha\vert v\vert_n\alpha^*\leq\vert\alpha v\alpha^*\vert_n.$
	Finally, interchanging $\alpha$ and $\alpha^*$, we get that $\alpha^*\vert v\vert_n \alpha\leq\vert \alpha^*v\alpha \vert_n.$ Hence $\vert \alpha^*v\alpha \vert_n= \alpha^*\vert v\vert_n \alpha.$ 
\end{proof} 

\begin{proposition}
	Let $(V, e)$ be an absolute matrix order unit space and let $\alpha \in M_{m,n}$ such that $\alpha^* \alpha = I_n$. Then $\alpha u \alpha^* \perp \alpha v \alpha^*$ for $u, v \in M_n(V)^+$ with $u \perp v.$ In particular, if $p \in \mathcal{OP}_n(V),$ then $\alpha p \alpha^* \in \mathcal{OP}_m(V)$.
\end{proposition}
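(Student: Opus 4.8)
The plan is to reduce the statement to three facts already available: the good behaviour of orthogonality under direct sums (Remark \ref{4}(1)), the corresponding fact for order projections (Proposition \ref{11}), and the invariance of the absolute value under conjugation by a scalar unitary (Lemma \ref{12}). The device that enables the reduction is the completion of the isometry $\alpha$ to a unitary. Note first that $\alpha u \alpha^*, \alpha v \alpha^* \in M_m(V)^+$ by the matrix order axiom (b) (with $\beta = \alpha^*$), so the orthogonality asserted is among positive elements and the absolute-value characterization of $\perp$ applies directly.

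First I would observe that, since $\alpha^*\alpha = I_n$, the $n$ columns of $\alpha$ are orthonormal in $\C^m$; in particular $n \le m$, and $\alpha$ can be completed to a unitary $U = \begin{bmatrix} \alpha & \alpha' \end{bmatrix} \in M_m$, where $\alpha' \in M_{m,m-n}$ extends the columns of $\alpha$ to an orthonormal basis. Writing $\iota = \begin{bmatrix} I_n \\ 0 \end{bmatrix} \in M_{m,n}$, we have $\alpha = U\iota$, and therefore, for any $w \in M_n(V)$,
$$\alpha w \alpha^* = U \iota\, w\, \iota^* U^* = U (w \oplus 0_{m-n}) U^*.$$
Thus $\alpha u \alpha^*$ and $\alpha v \alpha^*$ are conjugates, by the scalar unitary $U$, of the block elements $u \oplus 0_{m-n}$ and $v \oplus 0_{m-n}$.

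Now, since $u \perp v$ in $M_n(V)^+$ and trivially $0 \perp 0$ in $M_{m-n}(V)^+$, Remark \ref{4}(1) gives $u \oplus 0_{m-n} \perp v \oplus 0_{m-n}$ in $M_m(V)^+$. Conjugating by $U$ and invoking Lemma \ref{12} with the unitary $U^*$ (so that $\vert U w U^* \vert_m = U \vert w \vert_m U^*$), I would compute
$$\vert \alpha u \alpha^* - \alpha v \alpha^* \vert_m = U\, \vert (u\oplus 0) - (v\oplus 0) \vert_m\, U^* = U\big((u\oplus 0) + (v\oplus 0)\big)U^* = \alpha u \alpha^* + \alpha v \alpha^*,$$
which is exactly $\alpha u \alpha^* \perp \alpha v \alpha^*$, proving the first assertion. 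The final claim follows by the same method: if $p \in \mathcal{OP}_n(V)$ then $0_{m-n} \in \mathcal{OP}_{m-n}(V)$ (as $\vert 2\cdot 0 - e^{m-n}\vert = e^{m-n}$), so Proposition \ref{11} yields $p \oplus 0_{m-n} \in \mathcal{OP}_m(V)$; since $U$ is scalar we have $U e^m U^* = e^m$, whence, by Lemma \ref{12} once more,
$$\vert 2\alpha p \alpha^* - e^m \vert_m = \vert U\big(2(p\oplus 0) - e^m\big)U^* \vert_m = U\, \vert 2(p\oplus 0) - e^m \vert_m\, U^* = U e^m U^* = e^m,$$
so $\alpha p \alpha^* \in \mathcal{OP}_m(V)$.

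The only points requiring a little care are the completion of $\alpha$ to a unitary (pure linear algebra, legitimate because $\alpha^*\alpha = I_n$ forces $m \ge n$) and the two identities $\alpha w \alpha^* = U(w\oplus 0_{m-n})U^*$ and $U e^m U^* = e^m$; once these are in hand the argument is mechanical. I do not expect a genuine obstacle here, the main conceptual step being the recognition that an isometric conjugation can be realised as a unitary conjugation followed by padding with a zero block, which is precisely the form to which Remark \ref{4}, Proposition \ref{11}, and Lemma \ref{12} apply.
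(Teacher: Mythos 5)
Your proof is correct and takes essentially the same approach as the paper: both arguments complete the isometry $\alpha$ to a unitary, write $\alpha w \alpha^* = U(w \oplus 0_{m-n})U^*$, and apply Lemma \ref{12} to move the absolute value past the unitary conjugation. The only cosmetic difference is in the final claim, where the paper deduces $\alpha p \alpha^* \in \mathcal{OP}_m(V)$ from the first part via the characterization $r \perp (e^m - r)$ of order projections, while you verify the defining identity $\vert 2\alpha p\alpha^* - e^m\vert_m = e^m$ directly using Proposition \ref{11}; both steps are immediate.
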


\begin{proof}
	Let $u,v \in M_n(V)^+$ such that $u \perp v.$ Since $\alpha^*\alpha=I_n,$ we have that rank$(\alpha)=n$ so that $ n\leq m.$ Thus we can find an unitary $\delta\in M_m$ such that $\alpha=\delta\begin{bmatrix} I_n\\ o_{m-n,n}\end{bmatrix}.$ Then 
	$$\alpha u\alpha^*=\delta\begin{bmatrix} I_n\\ o_{m-n,n} \end{bmatrix}u\begin{bmatrix} I_n & o_{n,m-n}\end{bmatrix}\delta^*=\delta (u\oplus 0_{m-n})\delta^*.$$ 
	Similarly $\alpha v\alpha^* = \delta (v\oplus 0_{m-n})\delta^*.$ Now by Lemma \ref{12}, we get 	
	\begin{eqnarray*}
		\vert \alpha u\alpha^*-\alpha v\alpha^*\vert_m &=& \vert \delta (u\oplus 0_{m-n})\delta^*-\delta (v\oplus 0_{m-n})\delta^*\vert_m\\
		&=& \delta \vert (u-v)\oplus 0_{m-n}\vert_m \delta^* \\
		&=& \delta (\vert u-v\vert_n\oplus 0_{m-n})\delta^* \\
		&=& \delta ((u+v)\oplus 0_{m-n})\delta^* \\
		&=&\alpha u\alpha^* + \alpha v\alpha^*
	\end{eqnarray*}
	so that $\alpha u\alpha^*\perp \alpha v\alpha^*.$
	
	Next let $p\in \mathcal{OP}_n(V)$ and put $r=p\oplus 0_{m-n}.$ Then $r \in \mathcal{OP}_m(V)$ and $\alpha p\alpha^*=\delta r\delta^*.$ Thus $r \perp (e^m - r)$ so that, by the first part, we have $\delta r\delta^* \perp \delta (e^m-r)\delta^*=e^m-\delta r\delta^*$. Hence $\alpha p\alpha^* \in \mathcal{OP}_m(V).$
\end{proof}

\section{Comparison of order projections} 
Recall that a pair of projections $p$ and $q$ in a C$^*$-algebra $A$ is said to (Murray-von Neumann) equivalent if there exists a partial isometry $u \in A$ such that $p = u^* u$ and $q = u u^*$, or equivalently, $p = \vert u \vert$ and $q = \vert u^* \vert$. The alternative form provides room to extend the notion to absolute matrix order unit spaces. However, before we formally introduce the notion, let us look at the other aspects. 

First, let us note that existence of a partial isometry is identical to that of a (Murray-von Neumann) equivalent pair of projections. In other words, the study of equivalence of a pair of projections is the study of partial isometries. Being multiplicative in nature, a partial isometry enjoys certain properties with order theoretic connotation which we have not been able to prove order theoretically.
\begin{enumerate}
	\item Let $u$ be a partial isometry in a C$^*$-algebra $A$ so that $\vert u \vert$ and $\vert u^* \vert$ are projections and let $p$ be projection in $A$. Then $p \le \vert u \vert$ if and only if there exists a partial isometry $u_1 \in A$ with $u_1 \perp (u - u_1)$ such that $p = \vert u_1 \vert$. In fact, if $p \le \vert u \vert$, we put $u_1 = u p$. Then $u_1$ is also a partial isometry in $A$ with $u_1 \perp (u - u_1)$ such that $p = \vert u_1 \vert$. Conversely, if $u_1$ is a partial isometry in $A$ with $u_1 \perp (u - u_1)$ such that $p = \vert u_1 \vert$, then $p \le \vert u \vert$. 
	\item Let $u$ and $v$ be any two partial isometries in $A$ such that $\vert u \vert = \vert v \vert$. Put $w = v u^*$. Then $w^* w = u u^*$ and $w w^* = v v^*$, that is, $\vert w \vert = \vert u^* \vert$ and $\vert w^* \vert = \vert v^* \vert$. 
\end{enumerate} 
These properties can be extended to $M_{m,n}(A)$ for any $m, n \in \mathbb{N}$. We propose them in terms of the following conditions on an absolute matrix order unit space $V$:  
\begin{enumerate}
	\item[(H)] If $u \in \mathcal{PI}_{m,n}(V)$ and if $p \in \mathcal{OP}_n(V)$ with $p \le \vert u \vert_{m,n}$, then there exists a $u_1 \in \mathcal{PI}_{m,n}(V)$ with $u_1 \perp (u - u_1)$ such that $p = \vert u_1 \vert$. 
	\item[(T)] If $u \in \mathcal{PI}_{m,n}(V)$ and $v \in \mathcal{PI}_{l,n}(V)$ with $\vert u \vert_{m,n} = \vert v \vert_{l,n}$, then there exists a $w \in \mathcal{PI}_{m,l}(V)$ such that $\vert w^* \vert_{l,m} = \vert u^* \vert_{n,m}$ and $\vert w \vert_{m,l} = \vert v^* \vert_{n,l}$.
\end{enumerate} 

\begin{definition}
Let $V$ be an absolute matrix order unit space. We write 
$$\mathcal{OP}_{\infty}(V)= \lbrace p: p \in \mathcal{OP}_n(V) ~ \textrm{for some} ~ n \in \mathbb{N} \rbrace.$$
Given $p\in \mathcal{OP}_m(V)$ and $q\in \mathcal{OP}_n(V)$, we say that $p$ is \emph{partial isometric equivalent} to $q$, (we write, $p\sim q$), if there exists a $v\in \mathcal{PI}_{m,n}(V)$ such that $p=\vert v^*\vert_{n,m}$ and $q=\vert v\vert_{m,n}$. 
\end{definition}
\begin{lemma}\label{13} 
	Let $V$ be an absolute matrix order unit space satisfying condition (H) and let $p \in \mathcal{OP}_m(V)$ and $q \in \mathcal{OP}_n(V)$ for some $m, n \in \mathbb{N}$ with $p \sim q$. If $p_1 \leq p$ for some $p_1 \in \mathcal{OP}_m(V)$, then there exists $q_1 \in \mathcal{OP}_n(V)$ with $q_1 \le q$ such that $p_1 \sim q_1$ and $(p - p_1) \sim (q - q_1)$. 
\end{lemma}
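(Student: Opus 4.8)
The plan is to mimic the C$^*$-algebraic construction $q_1 = v^* p_1 v$ order-theoretically, using condition (H) to manufacture the relevant partial isometry. I start from a witness $v \in \mathcal{PI}_{m,n}(V)$ for $p \sim q$, so that $p = \vert v^* \vert_{n,m}$ and $q = \vert v \vert_{m,n}$. The first---and most important---decision is that, since $p_1$ lies in $\mathcal{OP}_m(V)$, i.e.\ on the side carrying $\vert v^* \vert_{n,m}$, I must feed $v^*$ (not $v$) into (H). Here $v^* \in \mathcal{PI}_{n,m}(V)$, because its two absolute values coincide with those of $v$, and $p_1 \le p = \vert v^* \vert_{n,m}$.

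Applying (H) to $v^*$ and $p_1$ then yields $u_1 \in \mathcal{PI}_{n,m}(V)$ with $u_1 \perp (v^* - u_1)$ and $p_1 = \vert u_1 \vert_{n,m}$. I would set $w := u_1^*$, so that $w \in \mathcal{PI}_{m,n}(V)$ with $\vert w^* \vert_{n,m} = \vert u_1 \vert_{n,m} = p_1$, and, since the orthogonality relation characterized in Remark \ref{4}(2) is symmetric under passing to adjoints, $w \perp (v - w)$. Defining $q_1 := \vert w \vert_{m,n}$, which lies in $\mathcal{OP}_n(V)$ since $w$ is a partial isometry, the element $w$ immediately witnesses $p_1 \sim q_1$. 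Moreover, $w \perp (v - w)$ together with $w + (v - w) = v \in \mathcal{PI}_{m,n}(V)$ lets me invoke Corollary \ref{10} (with two summands) to conclude that $v - w \in \mathcal{PI}_{m,n}(V)$ as well.

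Finally I would split the absolute values across this orthogonal sum. Proposition \ref{5}(1) applied to $w \perp (v - w)$ gives
\[
\vert v \vert_{m,n} = \vert w \vert_{m,n} + \vert v - w \vert_{m,n}, \qquad \vert v^* \vert_{n,m} = \vert w^* \vert_{n,m} + \vert (v - w)^* \vert_{n,m}.
\]
The first equality shows $q_1 \le q$ with $q - q_1 = \vert v - w \vert_{m,n}$, and the second shows $p - p_1 = \vert (v - w)^* \vert_{n,m}$; hence $v - w$ witnesses $(p - p_1) \sim (q - q_1)$, completing the argument. I expect the only genuine obstacle to be the bookkeeping of sides: one must apply (H) to $v^*$ so that the given subprojection $p_1$ matches the hypothesis $p \le \vert u \vert$ of (H), and then transport the resulting partial isometry back through the adjoint. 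Once this is set up correctly, everything else is a direct appeal to Proposition \ref{5}(1) and Corollary \ref{10}.
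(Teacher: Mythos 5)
Your proposal is correct and follows essentially the same approach as the paper: take a partial isometry witnessing $p \sim q$, use condition (H) to extract an orthogonal sub-partial-isometry whose adjoint has absolute value $p_1$, set $q_1$ to be the absolute value of that sub-partial-isometry, and split the absolute values across the orthogonal sum via Proposition \ref{5}(1). If anything, you are more careful than the paper's own proof, which applies (H) without making the passage to the adjoint $u^*$ explicit and leaves implicit the fact (your Corollary \ref{10} step) that the remainder $u - v$ is itself a partial isometry, as required by the definition of $\sim$.
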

\begin{proof} 
	Since $p \sim q$, there exists $u \in \mathcal{PI}_{m,n}(V)$ such that $p=\vert u^* \vert_{n,m}$ and $q=\vert u \vert_{m,n}$. As $p_1 \le p = \vert u^* \vert_{n,m}$, by condition (H), there exists $v \in \mathcal{PI}_{m,n}(V)$ with $v \perp (u - v)$ such that $\vert v^* \vert_{n,m} = p_1$. Put $q_1 = \vert v \vert_{m,n}$. Then $p_1 \sim q_1$ and $q = \vert u \vert_{m,n} = \vert v \vert_{m,n} + \vert (u - v) \vert_{m,n} \geq q_1$. Further, $\vert (u - v)^* \vert_{n,m} = p - p_1$ and $\vert (u - v) \vert_{m,n} = q - q_1$ so that $(p - p_1) \sim (q - q_1)$. 
\end{proof}
\begin{remark} 
	Let $V$ be an absolute matrix order unit space satisfying condition (H) and assume that $p_i, p \in \mathcal{OP}_m(V)$, $i = 1, \dots k$ and $q \in \mathcal{OP}_n(V)$ for some $k, m, n \in \mathbb{N}$. If $p_i \perp p_j$ for $i \neq j$ and if $p_i \leq p \sim q$ for each $i = 1, \dots k$, then there exists $q_i \in \mathcal{OP}_n(V)$, $i = 1, \dots k$ such that $q_i \leq q$, $p_i \sim q_i$ for all $i = 1, \dots k$ and $q_i \perp q_j$ whenever $i \neq j$. 
\end{remark}
\begin{proposition}\label{l4}
Let $V$ be an absolute matrix order unit space satisfying the condition (T). Then $\sim$ is an equivalence relation on $\mathcal{OP}_\infty(V).$
\end{proposition}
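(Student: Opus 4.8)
The plan is to check the three defining properties of an equivalence relation---reflexivity, symmetry, and transitivity---working directly from the definition of $\sim$, and to invoke condition (T) only for transitivity, where it is genuinely required.

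First I would dispose of reflexivity. Given $p \in \mathcal{OP}_m(V)$, the natural witness is $v = p$ itself. Since an order projection is self-adjoint and satisfies $0 \le p \le e^m$ by Remark \ref{7}(1), we have $p \in M_m(V)^+$, hence $\vert p \vert_m = p$ and $\vert p^* \vert_m = \vert p \vert_m = p$; both are order projections, so $p \in \mathcal{PI}_m(V)$. Reading off $p = \vert p^* \vert_m$ and $p = \vert p \vert_m$ shows $p \sim p$. This uses nothing beyond the definitions. Next, for symmetry, suppose $p \sim q$ is witnessed by $v \in \mathcal{PI}_{m,n}(V)$ with $p = \vert v^* \vert_{n,m}$ and $q = \vert v \vert_{m,n}$. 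I would take $w = v^*$. That $v^* \in \mathcal{PI}_{n,m}(V)$ follows at once from the definition of a partial isometry, since $\vert v^* \vert_{n,m}$ and $\vert (v^*)^* \vert_{m,n} = \vert v \vert_{m,n}$ are order projections. Then $\vert w \vert_{n,m} = \vert v^* \vert_{n,m} = p$ and $\vert w^* \vert_{m,n} = \vert v \vert_{m,n} = q$, so $w$ witnesses $q \sim p$. Again condition (T) is not needed here.

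The substance of the proof is transitivity, and this is exactly what condition (T) is built to supply. Suppose $p \sim q$ via $u \in \mathcal{PI}_{m,n}(V)$ (so $p = \vert u^* \vert_{n,m}$, $q = \vert u \vert_{m,n}$) and $q \sim r$ via $v \in \mathcal{PI}_{n,l}(V)$ (so $q = \vert v^* \vert_{l,n}$, $r = \vert v \vert_{n,l}$), where $p \in \mathcal{OP}_m(V)$, $q \in \mathcal{OP}_n(V)$, $r \in \mathcal{OP}_l(V)$. The crucial alignment is that $v^* \in \mathcal{PI}_{l,n}(V)$ with $\vert v^* \vert_{l,n} = q = \vert u \vert_{m,n}$; thus the pair $u \in \mathcal{PI}_{m,n}(V)$ and $v^* \in \mathcal{PI}_{l,n}(V)$ meets the hypotheses of (T) verbatim (common second index $n$, equal absolute values). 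Applying (T) yields $w \in \mathcal{PI}_{m,l}(V)$ with $\vert w^* \vert_{l,m} = \vert u^* \vert_{n,m} = p$ and $\vert w \vert_{m,l} = \vert (v^*)^* \vert_{n,l} = \vert v \vert_{n,l} = r$, which is precisely $p \sim r$.

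The main obstacle is not conceptual but notational: one must keep the three sizes $m, n, l$ straight and recognize that replacing the second witness $v$ by its adjoint $v^*$ is what converts the chain $p \sim q \sim r$ into the template demanded by (T). Once the indices are matched, the conclusion is immediate; the underlying C$^*$-algebraic picture is that reflexivity uses $p$, symmetry uses $v^*$, and transitivity composes the two partial isometries as $w = uv$, the map that condition (T) abstracts order-theoretically.
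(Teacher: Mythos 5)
Your proposal is correct and follows essentially the same route as the paper's proof: reflexivity by taking $p$ itself as the witnessing partial isometry, symmetry by passing to the adjoint $v^*$, and transitivity by applying condition (T) to the pair $u$ and $v^*$ after matching their common second index. The index bookkeeping and the recognition that the adjoint of the second witness is what puts the chain into the template of (T) are exactly the paper's argument, just spelled out more explicitly.
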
 
\begin{proof}
	(a) Let $p \in \mathcal{OP}_{\infty}(V)$. Considering $p \in \mathcal{PI}_{\infty}(V)$, we get $p \sim p$.
	
	(b) Let $p\in \mathcal{OP}_m(V)$ and $q\in \mathcal{OP}_n(V)$ be such that $p\sim q$. Then there exists a $v\in \mathcal{PI}_{m,n}(V)$ such that $p=\vert v^*\vert_{n,m}$ and $q=\vert v\vert_{n,m}$. Now, for $w = v^* \in \mathcal{PI}_{n,m}(V)$, we have $q = \vert w^* \vert_{m,n}$ and $p = \vert w \vert_{n,m}$. Thus $q \sim p$. 
	
	(c) Let $p\in \mathcal{OP}_k(V)$, $q\in \mathcal{OP}_l(V)$ and $r\in \mathcal{OP}_m(V)$ be such that $p \sim q$ and $q \sim r$. Then there exists $u \in \mathcal{PI}_{k,l}(V)$ and $v \in \mathcal{PI}_{l,m}(V)$ such that $p = \vert u^* \vert_{l,k}$, $q = \vert u \vert_{k,l}$, $q = \vert v^*\vert_{m,l}$ and $r = \vert v \vert_{l,m}$. As $\vert u \vert_{k,l} = \vert v^*\vert_{m,l}$, by condition (T), there exists $w \in \mathcal{PI}_{k,m}(V)$ such that $\vert w \vert_{k,m} = \vert v \vert_{l,m} = r$ and $\vert w^* \vert_{m,k} = \vert u^* \vert_{l,k} = p$. Thus $p \sim r$.
\end{proof}
\begin{proposition}\label{15}
	Let $V$ be an absolute matrix order unit space and let $p,q,r,p',q'\in \mathcal{OP}_\infty(V)$. Then 
	\begin{enumerate}
		\item If $m, n \in \mathbb{N}$ and let $p\in\mathcal{OP}_m(V)$, then $p\sim p\oplus 0_n$ and $p \sim 0_n \oplus p$; 
		\item If $p \sim q$ and $p' \sim q'$ with $p \perp p'$ and $q \perp q'$, then $p + p' \sim q + q'$;
		\item If $p\sim p'$ and $q\sim q',$ then $p\oplus q\sim p'\oplus q';$
		\item $p\oplus q\sim q\oplus p;$
		\item If $p,q\in \mathcal{OP}_n(V)$ for some $n\in \mathbb{N}$ such that $p\perp q,$ then $p+q\sim p\oplus q;$
		\item $(p\oplus q)\oplus r=p\oplus (q\oplus r).$
	\end{enumerate} 
\end{proposition}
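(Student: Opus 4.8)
The statements are handled one at a time; (6) is just associativity of the block direct sum and carries no content, so the work is in (1)--(5). In every case, to witness $a \sim b$ for $a \in \mathcal{OP}_r(V)$ and $b \in \mathcal{OP}_s(V)$ I must produce some $v \in \mathcal{PI}_{r,s}(V)$ with $\vert v^* \vert_{s,r} = a$ and $\vert v \vert_{r,s} = b$, and since symmetry of $\sim$ is not yet available, the orientation must match the definition exactly. The game throughout is to write down the right $v$ and read off its two absolute values.

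For (1), with $p \in \mathcal{OP}_m(V)$ I would take $v = \begin{bmatrix} p & 0 \end{bmatrix} \in M_{m,m+n}(V)$. By Remark \ref{3}(5), $\vert v \vert_{m,m+n} = \vert p \vert_m \oplus 0_n = p \oplus 0_n$, while $v^* = \begin{bmatrix} p \\ 0 \end{bmatrix}$ gives $\vert v^* \vert_{m+n,m} = \vert p \vert_m = p$ by Remark \ref{3}(4); both are order projections (Proposition \ref{11}), so $v \in \mathcal{PI}_{m,m+n}(V)$ and $p \sim p \oplus 0_n$. The claim $p \sim 0_n \oplus p$ is identical using $v = \begin{bmatrix} 0 & p \end{bmatrix}$. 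For (2), let $v, v' \in \mathcal{PI}_{m,n}(V)$ witness $p \sim q$ and $p' \sim q'$. Since $\vert v \vert_{m,n} = q \perp q' = \vert v' \vert_{m,n}$ and $\vert v^* \vert_{n,m} = p \perp p' = \vert v'^* \vert_{n,m}$, Remark \ref{4}(2) gives $v \perp v'$; hence $v + v' \in \mathcal{PI}_{m,n}(V)$ by Corollary \ref{10}, and Proposition \ref{5}(1) yields $\vert v + v' \vert_{m,n} = q + q'$ and $\vert (v + v')^* \vert_{n,m} = p + p'$, i.e. $p + p' \sim q + q'$. Statement (3) is the same idea with $\oplus$ replacing $+$: if $u, w$ witness $p \sim p'$ and $q \sim q'$, then $u \oplus w$ is the required partial isometry, as Definition \ref{2}(3) gives $\vert u \oplus w \vert = \vert u \vert \oplus \vert w \vert = p' \oplus q'$ and $\vert (u \oplus w)^* \vert = p \oplus q$, both order projections by Proposition \ref{11}.

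Statement (5) then drops out of (1) and (2): for orthogonal $p, q \in \mathcal{OP}_n(V)$ we have $p \sim p \oplus 0_n$ and $q \sim 0_n \oplus q$ by (1), the targets $p \oplus 0_n$ and $0_n \oplus q$ are orthogonal in $\mathcal{OP}_{2n}(V)$ by Remark \ref{4}(1), and (2) gives $p + q \sim (p \oplus 0_n) + (0_n \oplus q) = p \oplus q$.

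The one genuinely different step is (4), which I expect to be the main obstacle. Here I would use a permutation unitary $\sigma \in M_{m+n}$ with $\sigma^*(p \oplus q)\sigma = q \oplus p$ and set $v = (p \oplus q)\sigma \in M_{m+n}(V)$. The difficulty is that Remark \ref{3}(1) controls the absolute value only under \emph{left} multiplication by an isometry, whereas I need its behavior under \emph{right} multiplication by $\sigma$. I would supply this by the conjugation identity $\begin{bmatrix} 0 & (p \oplus q)\sigma \\ \sigma^*(p \oplus q) & 0 \end{bmatrix} = \gamma^* \begin{bmatrix} 0 & p \oplus q \\ p \oplus q & 0 \end{bmatrix} \gamma$ with the unitary $\gamma = I_{m+n} \oplus \sigma$, and then apply Lemma \ref{12} together with Remark \ref{3}(2) to read off $\vert v \vert_{m+n} = \sigma^*(p \oplus q)\sigma = q \oplus p$ and $\vert v^* \vert_{m+n} = p \oplus q$. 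As both are order projections, $v \in \mathcal{PI}_{m+n}(V)$ and $p \oplus q \sim q \oplus p$. This right-multiplication computation, rather than any conceptual subtlety, is the step that must be set up carefully.
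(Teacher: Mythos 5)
Your proof is correct, and on parts (1), (2), (3), (5), (6) it is essentially the paper's own argument: the same padded row/column witnesses for (1), the same chain Remark \ref{4}(2) $\to$ Corollary \ref{10} $\to$ Proposition \ref{5}(1) for (2), the block direct sum for (3), and the same reduction of (5) to (1) and (2). The genuine divergence is part (4), and there the difference is not the witness but the computation: your $v = (p\oplus q)\sigma$ is exactly the anti-diagonal matrix $\begin{bmatrix} 0 & p \\ q & 0 \end{bmatrix}$, which is the paper's witness $\begin{bmatrix} 0 & q \\ p & 0\end{bmatrix}$ with the roles of $p$ and $q$ interchanged; but where you control right multiplication by $\sigma$ via the embedding $\begin{bmatrix} 0 & v \\ v^* & 0\end{bmatrix} = \gamma^* \begin{bmatrix} 0 & p\oplus q \\ p\oplus q & 0\end{bmatrix}\gamma$ with $\gamma = I_{m+n}\oplus\sigma$, followed by Lemma \ref{12} and Remark \ref{3}(2), the paper sidesteps right multiplication entirely: left multiplying $\begin{bmatrix} 0 & q \\ p & 0 \end{bmatrix}$ and its adjoint by the block-swap unitaries $\begin{bmatrix} 0 & I_m \\ I_n & 0\end{bmatrix}$ and $\begin{bmatrix} 0 & I_n \\ I_m & 0 \end{bmatrix}$ produces $p \oplus q$ and $q \oplus p$, so Remark \ref{3}(1) immediately yields $\left\vert \begin{bmatrix} 0 & q \\ p & 0\end{bmatrix}\right\vert = p \oplus q$ and $\left\vert \begin{bmatrix} 0 & q \\ p & 0 \end{bmatrix}^*\right\vert = q \oplus p$. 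The paper's route is a two-line argument; yours is heavier but valid, and it buys a reusable general fact (right multiplication by a unitary conjugates the absolute value: $\vert w\sigma\vert = \sigma^*\vert w\vert\sigma$ while $\vert (w\sigma)^*\vert = \vert w^*\vert$). Your insistence on matching the orientation of $\sim$ to the definition is also the cleaner reading: the paper itself is loose here (its witness in (1) literally gives $p\oplus 0_n \sim p$ rather than $p \sim p\oplus 0_n$, and its proof of (3) mislabels the hypotheses), which is harmless only because the symmetry argument in Proposition \ref{l4} does not actually use condition (T).
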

\begin{proof} 
	\begin{enumerate}
		\item Let $m, n \in \mathbb{N}$ and let $p \in \mathcal{OP}_m(V)$. Put $v = \begin{bmatrix} p \\ 0_{n,m} \end{bmatrix}$. Then by Remark \ref{3}(4) and (5), we have $\vert v \vert = p$ and $\vert v^* \vert = p \oplus 0_n$. Thus $p \sim p \oplus 0_n$. Similarly, $w = \begin{bmatrix} 0_{n,m} \\ p \end{bmatrix}$ yields $\vert w \vert = p$ and $\vert w^* \vert = 0_n \oplus p$ so that $p \sim 0_n \oplus p$. 
		\item For definiteness, let $p, p' \in \mathcal{OP}_m(V)$ and $q, q' \in \mathcal{OP}_n(V)$. Since $p \perp p'$ and $q \perp q'$, by Remark \ref{7}(2), we have $p + p' \in \mathcal{OP}_m(V)$ and $q + q' \in \mathcal{OP}_n(V)$. Since $p \sim q$ and $p' \sim q'$, there exist $v, v' \in \mathcal{PI}_{m,n}(V)$ such that $p = \vert v^* \vert_{n,m}$, $q = \vert v \vert_{m,n}$, $p' = \vert v'^*\vert_{n,m}$ and $q' = \vert v' \vert_{m,n}$. As $p \perp p'$ and $q \perp q'$, by Remark \ref{4}, we get that $v \perp v'$. Now by Corollary \ref{10}, we may conclude that $v + v' \in \mathcal{PI}_{m,n}$. Also then 
		$$\vert v + v' \vert_{m,n} = \vert v \vert_{m,n} + \vert v' \vert_{m,n} = q + q'$$ 
		and 
		$$\vert v^* + v'^* \vert_{n,m} = \vert v^* \vert_{n,m} + \vert v'^* \vert_{n,m} = p + p'$$ 
		so that $p + p' \sim q + q'$. 
		\item Since $p \sim q$ and $p' \sim q'$, there exist $v \in \mathcal{PI}_{m,n}(V)$ and $v' \in \mathcal{PI}_{r,s}(V)$ such that $p = \vert v^* \vert_{n,m}$, $q = \vert v \vert_{m,n}$, $p' = \vert v'^*\vert_{s,r}$ and $q' = \vert v' \vert_{r,s}$. Put $w = \begin{bmatrix} v & 0 \\ 0 & v'\end{bmatrix}$. Then $\vert w^*\vert_{n+s,m+r} = p \oplus q$ and $\vert w\vert_{m+r,n+s} = p' \oplus q'$. Hence $p \oplus q \sim p' \oplus q'$.
		
		\item Put $v=\begin{bmatrix} 0 & q\\ p & 0 \end{bmatrix}.$ Then $\begin{bmatrix}0 & I_m\\ I_n & 0\end{bmatrix}\begin{bmatrix} 0 & q\\ p & 0\end{bmatrix}=p\oplus q$ and $\begin{bmatrix}0 & I_n\\ I_m & 0\end{bmatrix}\begin{bmatrix} 0 & q \\ p & 0\end{bmatrix}^*=q\oplus p$, where $\begin{bmatrix}0 & I_m\\ I_n & 0\end{bmatrix}$ and $\begin{bmatrix}0 & I_n\\ I_m & 0\end{bmatrix}$ are isometries in $M_{m+n}.$ Therefore by Remark \ref{3}(1), we get that $p\oplus q=\left\vert \begin{bmatrix} 0 & q\\ p & 0 \end{bmatrix}\right\vert$ and $q\oplus p=\left\vert\begin{bmatrix} 0 & q\\ p & 0\end{bmatrix}^*\right\vert.$ Hence $ p \oplus q\sim q \oplus p.$
		
		\item By (1), we get that $p \sim p \oplus 0_n$ and $q \sim 0_n \oplus q$. Also $p \oplus 0_n \perp 0_n \oplus q$. Thus by (2), we may conclude that $p + q \sim p \oplus q$.
	\end{enumerate} 
	Now, (6) can be proved in the same way.
\end{proof} 
\begin{definition}
	Let $V$ be an absolute matrix order unit space and let $p, q \in \mathcal{OP}_\infty(V)$. We say that $p\preceq q$, if there exists $r \in \mathcal{OP}_\infty(V)$ such that $p \sim r \leq q$. 
\end{definition}

\begin{proposition}\label{16}
	Let $V$ be an absolute matrix order unit space satisfying conditions (T) and (H). 
	\begin{enumerate}
		\item Then $\preceq$ is a reflexive and transitive relation.
		\item Let $p, q, r, s \in \mathcal{OP}_\infty(V)$. If $p \preceq r$ and $q \preceq s$, then $p \oplus q \preceq r \oplus s$.
	\end{enumerate}
\end{proposition}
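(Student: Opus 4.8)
The plan is to treat reflexivity and transitivity of $\preceq$ separately in part (1), and to reduce part (2) to the direct-sum compatibility of $\sim$ established earlier. Throughout I would lean on the three facts already available: reflexivity and transitivity of $\sim$ from Proposition \ref{l4}, the transport lemma (Lemma \ref{13}), and the direct-sum behaviour of $\sim$ from Proposition \ref{15}(3).

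Reflexivity is immediate: since $p \sim p$ by part (a) of Proposition \ref{l4} and $p \leq p$, we have $p \sim p \leq p$, hence $p \preceq p$. This needs no further argument.

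The heart of part (1) is transitivity, and this is where I expect the main obstacle to lie. Suppose $p \preceq q$ and $q \preceq s$. Unwinding the definition, there are order projections $r_1, r_2$ with $p \sim r_1 \leq q$ and $q \sim r_2 \leq s$. The difficulty is that $r_1$ sits below $q$ rather than below $r_2$, so the two relations cannot be chained directly. To bridge this gap I would invoke Lemma \ref{13}, applied to the equivalence $q \sim r_2$ together with the subprojection $r_1 \leq q$: it produces an order projection $r_1' \leq r_2$ with $r_1 \sim r_1'$. Transitivity of $\sim$ (Proposition \ref{l4}, which uses condition (T)) then gives $p \sim r_1'$, while $r_1' \leq r_2 \leq s$ gives $r_1' \leq s$. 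Thus $p \sim r_1' \leq s$, which is precisely $p \preceq s$. This step is exactly where both hypotheses are consumed: condition (H) enters through Lemma \ref{13}, and condition (T) through the transitivity of $\sim$, explaining why the proposition assumes both.

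For part (2) the argument is more direct and I do not anticipate any real obstruction. From $p \preceq r$ and $q \preceq s$ I obtain order projections $p_1, q_1$ with $p \sim p_1 \leq r$ and $q \sim q_1 \leq s$. Proposition \ref{15}(3) immediately yields $p \oplus q \sim p_1 \oplus q_1$. It then remains only to verify $p_1 \oplus q_1 \leq r \oplus s$, which holds because $(r \oplus s) - (p_1 \oplus q_1) = (r - p_1) \oplus (s - q_1)$ is a direct sum of positive elements and hence positive. Combining these, $p \oplus q \sim p_1 \oplus q_1 \leq r \oplus s$, so $p \oplus q \preceq r \oplus s$, completing the proof.
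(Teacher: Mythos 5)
Your proof is correct and follows essentially the same route as the paper: reflexivity from $p \sim p$, transitivity by applying Lemma \ref{13} to $q \sim q_1$ with the subprojection $p_1 \leq q$ and then chaining with Proposition \ref{l4}, and part (2) via Proposition \ref{15}(3) together with the observation that direct sums preserve the order relation. The only difference is that you spell out the positivity of $(r - p_1) \oplus (s - q_1)$, which the paper leaves implicit.
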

\begin{proof}
	(1). Let $p, q, r \in \mathcal{OP}_\infty(V)$. Then $p \preceq p$ holds trivially. Next let $p \preceq q$ and $q \preceq r$. Then there exist $p_1, q_1 \in \mathcal{OP}_\infty(V)$ such that $p \sim p_1 \le q$ and $q \sim q_1 \le r$. Since $p_1 \le q \sim q_1$, by Lemma \ref{13}, there exists $q_2 \le q_1$ such that $p_1 \sim q_2$. Since $p \sim p_1$ and since $p_1 \sim q_2$, by Proposition \ref{l4}, we get $p \sim q_2 \le q_1 \le r$. Thus $p \preceq r$. 
	
	(2). Let $p \preceq r$ and $q \preceq s$. Then there are $p_1, q_1 \in \mathcal{OP}_\infty(V)$  such that $p \sim p_1 \le r$ and $q \sim q_1 \le s$. Thus $p \oplus q \sim p_1 \oplus q_1 \le r \oplus s$ so that $p \oplus q \preceq r \oplus s$. 
\end{proof}

\begin{proposition}\label{17}
	Let $V$ be absolute matrix order unit space satisfying conditions (T) and (H) and let $p, q \in \mathcal{OP}_\infty(V)$. Then $p\preceq q$ if and only if $q \sim p \oplus p_0$ for some $p_0 \in \mathcal{OP}_\infty(V)$. 
\end{proposition}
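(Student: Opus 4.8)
The plan is to deduce both directions from the arithmetic of $\sim$ collected in Proposition \ref{15}, together with Lemma \ref{13} (which uses (H)) and the transitivity of $\sim$ from Proposition \ref{l4} (which uses (T)). Before starting, I would isolate one preliminary fact about order projections: if $r, q \in \mathcal{OP}_n(V)$ with $r \le q$, then $q - r \in \mathcal{OP}_n(V)$ and $r \perp (q - r)$. Indeed, $e^n - q \in \mathcal{OP}_n(V)$ since $\vert 2(e^n - q) - e^n\vert_n = \vert e^n - 2q\vert_n = \vert 2q - e^n\vert_n = e^n$, and $r \le q$ gives $r + (e^n - q) \le e^n$; so Remark \ref{7}(2) yields $r \perp (e^n - q)$ and $r + (e^n - q) \in \mathcal{OP}_n(V)$, whence its complement $q - r = e^n - \bigl(r + (e^n - q)\bigr) \in \mathcal{OP}_n(V)$. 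Applying Remark \ref{7}(2) once more to $r + (q - r) = q \le e^n$ gives $r \perp (q - r)$.

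For the forward implication, assume $p \preceq q$, so there is $r \in \mathcal{OP}_\infty(V)$ with $p \sim r \le q$; the relation $r \le q$ forces $r, q \in \mathcal{OP}_n(V)$ for a common $n$. Setting $p_0 := q - r$, the preliminary fact gives $p_0 \in \mathcal{OP}_n(V)$ and $r \perp p_0$, so Proposition \ref{15}(5) yields $q = r + p_0 \sim r \oplus p_0$. Since $p \sim r$ and $p_0 \sim p_0$, Proposition \ref{15}(3) gives $p \oplus p_0 \sim r \oplus p_0$; combining this with the previous equivalence by symmetry and transitivity (Proposition \ref{l4}) produces $q \sim p \oplus p_0$, as desired.

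For the converse, assume $q \sim p \oplus p_0$ with $p \in \mathcal{OP}_m(V)$ and $p_0 \in \mathcal{OP}_k(V)$. By Proposition \ref{15}(1) we have $p \sim p \oplus 0_k$, and plainly $p \oplus 0_k \le p \oplus p_0$. I would then apply Lemma \ref{13} to the equivalence $p \oplus p_0 \sim q$ with the sub-projection $p \oplus 0_k \le p \oplus p_0$, obtaining $q_1 \in \mathcal{OP}_n(V)$ with $q_1 \le q$ and $p \oplus 0_k \sim q_1$. Then $p \sim p \oplus 0_k \sim q_1 \le q$, and transitivity gives $p \sim q_1 \le q$, i.e. $p \preceq q$.

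No single step is genuinely hard; the work is bookkeeping the matrix sizes and ensuring both hypotheses are invoked in the right place — (T) for transitivity of $\sim$ and (H) through Lemma \ref{13}. The only step that is not a direct quotation of an earlier result is the preliminary order-projection identity showing $q - r$ is a complementary order projection orthogonal to $r$, and that is exactly the point I would establish first.
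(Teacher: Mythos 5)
Your proof is correct and follows essentially the same route as the paper's: the forward direction sets $p_0 = q - r$ and combines the parts of Proposition~\ref{15} with transitivity, and the converse applies Lemma~\ref{13} to $p \oplus 0 \le p \oplus p_0 \sim q$ together with Proposition~\ref{15}(1). The only difference is cosmetic: you explicitly verify that $q - r$ is an order projection orthogonal to $r$ (a fact the paper asserts without proof), and you route the forward direction through Proposition~\ref{15}(5) and (3) rather than \ref{15}(2); both are sound.
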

\begin{proof}
	First assume that $p \preceq q$. Then $p \sim r \leq q$ for some $r \in \mathcal{OP}_\infty(V)$. Put $p_0 = (q - r)$. Then $p_0 \in \mathcal{OP}_\infty(V)$ with $p_0 \perp r$. Since $r \sim p$, by Proposition \ref{15}, we get that $q = r + p_0 \sim p \oplus p_0.$
	
	Conversely assume that $q \sim p \oplus p_0$ for some $p_0 \in \mathcal{OP}_\infty(V)$. Then $p \oplus 0 \leq p \oplus p_0 \sim q$. Thus by Lemma \ref{13}, there exist $r \in \mathcal{OP}_\infty(V)$ such that $p \oplus 0 \sim r \leq q$. Now, by Proposition \ref{15}(1), we have $p \sim p \oplus 0$ so that $p \preceq q$.
\end{proof}

\begin{corollary}
	Let $V$ be an absolute matrix order unit space satisfying (T) and let $p_1,p_2,q_1,q_2 \in \mathcal{OP}_\infty(V)$ such that $p_1 \perp p_2$ and $q_1 \perp q_2$. If $p_1 \preceq q_1$ and $p_2 \preceq q_2,$ then $p_1 + p_2 \preceq q_1 + q_2$.
\end{corollary}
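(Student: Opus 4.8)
The plan is to unfold the definition of $\preceq$ twice and then recombine using the additivity of $\sim$ from Proposition \ref{15}(2), rather than routing through the direct-sum operation. First I would fix the matrix levels: since $p_1 \perp p_2$ the two projections lie in a common $\mathcal{OP}_n(V)$, and since $q_1 \perp q_2$ they lie in a common $\mathcal{OP}_m(V)$; by Remark \ref{7}(2) both $p_1 + p_2 \in \mathcal{OP}_n(V)$ and $q_1 + q_2 \in \mathcal{OP}_m(V)$, so the two sides of the desired relation are genuine order projections in $\mathcal{OP}_\infty(V)$.

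Next, from $p_1 \preceq q_1$ and $p_2 \preceq q_2$ I would extract $r_1, r_2 \in \mathcal{OP}_m(V)$ with $p_1 \sim r_1 \le q_1$ and $p_2 \sim r_2 \le q_2$. The crucial point is to show $r_1 \perp r_2$. This follows from the downward stability of orthogonality built into the definition of an absolutely ordered space (condition (4), equivalently condition (5) of the characterization theorem): from $q_1 \perp q_2$ and $0 \le r_1 \le q_1$ one gets $q_2 \perp r_1$, and then from $r_1 \perp q_2$ and $0 \le r_2 \le q_2$ one gets $r_1 \perp r_2$. Since $r_1, r_2$ are orthogonal order projections, $r_1 + r_2 \in \mathcal{OP}_m(V)$ again by Remark \ref{7}(2), and clearly $r_1 + r_2 \le q_1 + q_2$.

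Finally, I would apply Proposition \ref{15}(2) to the data $p_1 \sim r_1$, $p_2 \sim r_2$, $p_1 \perp p_2$, $r_1 \perp r_2$ to conclude $p_1 + p_2 \sim r_1 + r_2$. Combining this with $r_1 + r_2 \le q_1 + q_2$ gives $p_1 + p_2 \sim (r_1 + r_2) \le q_1 + q_2$, which is exactly $p_1 + p_2 \preceq q_1 + q_2$.

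The main obstacle is really a matter of choosing the right route. The tempting argument --- write $p_1 + p_2 \sim p_1 \oplus p_2$ and $q_1 + q_2 \sim q_1 \oplus q_2$ by Proposition \ref{15}(5), deduce $p_1 \oplus p_2 \preceq q_1 \oplus q_2$ from Proposition \ref{16}(2), and splice --- forces one to chain a $\sim$ on the right end of a $\preceq$, i.e.\ to pass from $p_1 + p_2 \preceq q_1 \oplus q_2 \sim q_1 + q_2$ to $p_1 + p_2 \preceq q_1 + q_2$; that step relies on Lemma \ref{13} and hence on condition (H), which is \emph{not} among the hypotheses here. The direct additive argument above sidesteps this, using only the downward stability of orthogonality and Proposition \ref{15}(2), and so stays within the assumption (T).
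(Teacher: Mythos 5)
Your proof is correct, and it takes a genuinely different route from the paper's --- in fact, the paper's own proof is exactly the ``tempting argument'' you flag. The paper deduces $p_1 \oplus p_2 \preceq q_1 \oplus q_2$ from Proposition \ref{16}(2), notes $p_1 + p_2 \sim p_1 \oplus p_2$ and $q_1 + q_2 \sim q_1 \oplus q_2$ by Proposition \ref{15}(5), and then splices these together by citing transitivity of $\preceq$, i.e.\ Proposition \ref{16}(1). As you point out, the splice at the right-hand end (passing from $p_1+p_2 \preceq q_1 \oplus q_2 \sim q_1+q_2$ to the conclusion) rests on Lemma \ref{13}, and Proposition \ref{16}(1) is stated only for spaces satisfying both (T) and (H); since the corollary assumes only (T), the paper's proof as written invokes a result outside its own stated hypotheses. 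Your argument --- extract witnesses $r_1, r_2$ with $p_i \sim r_i \le q_i$, prove $r_1 \perp r_2$ by two applications of the downward stability of orthogonality (condition (4) in the definition of an absolutely ordered space, applied in $M_m(V)_{sa}$, together with symmetry of $\perp$), then recombine via Proposition \ref{15}(2) and invoke Remark \ref{7}(2) so that $r_1+r_2$ and $q_1+q_2$ are genuine order projections --- avoids this entirely, and in fact uses neither (H) nor (T): transitivity of $\sim$ never enters, and every ingredient you cite holds in an arbitrary absolute matrix order unit space. So what the paper's route buys is brevity given the earlier machinery; what yours buys is a proof valid under the stated hypotheses (indeed under none of the extra conditions), which simultaneously repairs the gap in the paper's own argument.
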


\begin{proof}
	Suppose that $p_1 \preceq q_1$ and $p_2 \preceq q_2$. By Proposition \ref{16}(2), we get that $p_1 \oplus  p_2 \preceq q_1 \oplus q_2$. Now $p_1 + p_2 \sim p_1 \oplus p_2$ and $q_1 + q_2 \sim q_1 \oplus q_2$ so that $p_1 + p_2 \preceq p_1 \oplus p_2$ and $q_1 \oplus q_2 \preceq q_1 + q_2$. Therefore, again by Proposition \ref{16}(1), we may conclude that $p_1 + p_2 \preceq q_1 + q_2$.
\end{proof}

\subsection{Unitary equivalence}
\begin{definition}
	Let $V$ be an absolutely matrix ordered space and let $u,v,w \in M_n(V)$ for some $n \in \mathbb{N}$ such that $u=v+w.$ We say that $v$ and $w$ are ortho-component of $u$ if $v \perp w.$ 
\end{definition}
\begin{remark} 
	\begin{enumerate}
		\item Let $V$ be an absolute matrix order unit space. Then a partial isometry in $V$ may be ortho-component of more than one unitary in $V.$ To see this, let $V= M_2(\mathbb{C})$ and put $v = \begin{bmatrix} 0 & 1 \\ 0 & 0\end{bmatrix}$ and $w_c = \begin{bmatrix} 0 & 0 \\ c & 0\end{bmatrix}$ with $\vert c\vert = 1$. Then $u_c := v + w_c$ is a unitary in $V$ and $w_c$ is an ortho-complement of $v$ for any $c \in \mathbb{C}$ with $\vert c\vert = 1$. 
		\item Let $V$ be an absolute matrix order unit space. Then every partial unitary is an ortho-component of some unitary. In fact, if $v$ is partial unitary in $M_n(V)$ for some $n\in \mathbb{N}$, then $v\perp e^n-\vert v\vert_n$ so that $v + e^n-\vert v\vert_n$ is unitary. 
	\end{enumerate}	
\end{remark} 
\begin{definition}
	Let $V$ be an absolute matrix order unit space and let $p, q \in \mathcal{OP}_n(V)$ for some $n \in \mathbb{N}.$ We say that $p$ is \emph{unitary equivalent} to $q$, (we write, $p \sim_u q$), if there exists a unitary $u \in M_n(V)$ and an ortho-component $v$ of $u$ such that $p = \vert v^* \vert_n$ and $q = \vert v \vert_n.$
\end{definition}
Note that if $v$ is an ortho-complement of a unitary $u$, then $v$ and $u - v$ are partial isometries. In the next result, we characterize unitary equivalence of projections in terms of partial isometry equivalence. 
\begin{proposition}\label{18}
	Let $V$ be an absolute matrix order unit space and let $p,q\in \mathcal{OP}_n(V)$ for some $n\in \mathbb{N}.$ Then $p\sim_u q$ if, and only if, $p\sim q$ and $e^n-p \sim e^n-q.$ 
\end{proposition}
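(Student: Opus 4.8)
The plan is to exploit the defining decomposition $u = v + w$ with $v \perp w$ behind unitary equivalence, together with Proposition \ref{5}(1), which makes the absolute value of an orthogonal sum additive; the whole argument then reduces to matching the partial isometry $w = u - v$ against the complementary order projections $e^n - p$ and $e^n - q$. For the forward implication, suppose $p \sim_u q$, so there is a unitary $u \in M_n(V)$ and an ortho-component $v$ of $u$, say $u = v + w$ with $v \perp w$, such that $p = \vert v^* \vert_n$ and $q = \vert v \vert_n$. By the remark preceding the statement, both $v$ and $w$ lie in $\mathcal{PI}_n(V)$; since $v$ already realizes $p = \vert v^* \vert_n$ and $q = \vert v \vert_n$, we get $p \sim q$ at once. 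To treat the complements I would apply Proposition \ref{5}(1) to the orthogonal pair $v \perp w$, obtaining $\vert u \vert_n = \vert v \vert_n + \vert w \vert_n$ and $\vert u^* \vert_n = \vert v^* \vert_n + \vert w^* \vert_n$. As $u$ is unitary, $\vert u \vert_n = \vert u^* \vert_n = e^n$, whence $\vert w \vert_n = e^n - q$ and $\vert w^* \vert_n = e^n - p$, so $w \in \mathcal{PI}_n(V)$ witnesses $e^n - p \sim e^n - q$.

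For the converse, suppose $p \sim q$ and $e^n - p \sim e^n - q$. Choose $v \in \mathcal{PI}_n(V)$ with $p = \vert v^* \vert_n$ and $q = \vert v \vert_n$, and $w \in \mathcal{PI}_n(V)$ with $e^n - p = \vert w^* \vert_n$ and $e^n - q = \vert w \vert_n$. The goal is to show that $u := v + w$ is a unitary having $v$ as an ortho-component. The crucial step is the orthogonality $v \perp w$: by Remark \ref{4}(2) this is equivalent to $\vert v \vert_n \perp \vert w \vert_n$ together with $\vert v^* \vert_n \perp \vert w^* \vert_n$, that is, to $q \perp (e^n - q)$ and $p \perp (e^n - p)$, and both hold automatically because an order projection is orthogonal to its complement by Remark \ref{7}(1). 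Once $v \perp w$ is in hand, Proposition \ref{5}(1) yields $\vert u \vert_n = q + (e^n - q) = e^n$ and $\vert u^* \vert_n = p + (e^n - p) = e^n$, so $u$ is unitary; and $u = v + w$ with $v \perp w$ exhibits $v$ as an ortho-component of $u$ realizing $p = \vert v^* \vert_n$, $q = \vert v \vert_n$. Hence $p \sim_u q$.

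The one substantive point — the apparent obstacle — is verifying $v \perp w$ in the converse, since the two partial isometries are produced by unrelated equivalences and there is no a priori reason for them to be orthogonal. The resolution is that orthogonality of general elements is detected entirely through their absolute values by Remark \ref{4}(2), and here those absolute values are exactly the complementary pairs $q, e^n - q$ and $p, e^n - p$, whose orthogonality is built into the characterization of order projections in Remark \ref{7}(1). Everything else is a direct invocation of the additivity of the absolute value on orthogonal sums.
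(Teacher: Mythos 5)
Your proposal is correct and follows essentially the same route as the paper's own proof: both directions hinge on the additivity of the absolute value over the orthogonal decomposition $u = v + w$ (Proposition \ref{5}(1)), and the converse recovers $v \perp w$ from $\vert v\vert_n \perp \vert w\vert_n$ and $\vert v^*\vert_n \perp \vert w^*\vert_n$ via Remark \ref{4}(2), exactly as in the paper. Your explicit appeal to Remark \ref{7}(1) for $p \perp (e^n - p)$ and $q \perp (e^n - q)$ only spells out what the paper leaves implicit.
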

\begin{proof}
	First, let $p\sim_u q$. Then there exists an ortho-complement $v$ of a unitary $u$ in $M_n(V)$ such that $p = \vert v^* \vert_n$ and $q = \vert v \vert_n$. Put $w = u - v$. Then $$\vert v \vert_n + \vert w \vert_n = \vert v + w \vert_n = \vert u \vert_n = e^n$$ 
	and 
	$$\vert v^* \vert_n + \vert w^* \vert_n = \vert v^* + w^* \vert_n = \vert u^* \vert_n = e^n.$$ 
	Thus $\vert w^* \vert_n = e^n - p$ and $\vert w \vert_n = e^n - q$ so that $p \sim q$ and $e^n-p \sim e^n-q$. 
	
	Conversely, assume that $p\sim q$ and $e^n-p \sim e^n-q.$ Then there exist partial isometries $v, w \in M_n(V)$ such that $p = \vert v^* \vert_n$, $q = \vert v \vert_n$, $e^n - p = \vert w^* \vert_n$ and $e^n-q = \vert w \vert_n.$ Thus $\vert v \vert_n \perp \vert w \vert_n$ and $\vert v^* \vert_n \perp \vert w^* \vert_n$. Now, by Remark \ref{4}(2) and Proposition \ref{5}(1), we get that $v \perp w$ with $\vert v + w \vert_n = e^n = \vert v^* + w^* \vert_n$. Thus $u := v + w$ is a unitary so that $p\sim_u q.$
\end{proof}
\begin{corollary} 
	Let $V$ be an absolute matrix order unit space satisfying the condition (T).
	\begin{enumerate}
		\item Then $\sim_u$ is an equivalence relation in $\mathcal{OP}_n(V)$ for all $n \in \mathbb{N}$. 
		\item If $p \in \mathcal{OP}_m(V)$ and $q \in \mathcal{OP}_n(V)$, then $p \oplus q \sim_u q \oplus p$. 
		\item If $p, p' \in \mathcal{OP}_m(V)$ and $q, q' \in \mathcal{OP}_n(V)$ with $p \sim_u p'$ and $q \sim_u q'$, then $p \oplus q \sim_u p' \oplus q'$.
	\end{enumerate}
\end{corollary}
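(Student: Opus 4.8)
The plan is to reduce every assertion to the characterization of $\sim_u$ provided by Proposition \ref{18}, namely that for $p, q \in \mathcal{OP}_n(V)$ one has $p \sim_u q$ if and only if $p \sim q$ and $e^n - p \sim e^n - q$. Since condition (T) holds, Proposition \ref{l4} guarantees that $\sim$ is an equivalence relation on $\mathcal{OP}_\infty(V)$, and this is exactly the structure I intend to transfer to $\sim_u$ via Proposition \ref{18}.

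For part (1), I would verify the three axioms directly through this characterization. Reflexivity is immediate from $p \sim p$ and $e^n - p \sim e^n - p$. For symmetry, if $p \sim_u q$ then $p \sim q$ and $e^n - p \sim e^n - q$; applying the symmetry of $\sim$ to both relations yields $q \sim_u p$. Transitivity is the same bookkeeping: from $p \sim_u q$ and $q \sim_u r$ I obtain $p \sim q \sim r$ and $e^n - p \sim e^n - q \sim e^n - r$, and the transitivity of $\sim$ then gives $p \sim r$ and $e^n - p \sim e^n - r$, i.e. $p \sim_u r$. Thus part (1) is nothing more than Proposition \ref{18} layered over Proposition \ref{l4}.

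For parts (2) and (3) the only extra ingredient is the behaviour of complements under $\oplus$. The key observation is that $e^{m+n} = e^m \oplus e^n$, so that $e^{m+n} - (p \oplus q) = (e^m - p) \oplus (e^n - q)$, and similarly for the other direct sums; moreover $p \oplus q \in \mathcal{OP}_{m+n}(V)$ by Proposition \ref{11}, so these complements are genuine order projections. For part (2), Proposition \ref{15}(4) gives both $p \oplus q \sim q \oplus p$ and $(e^m - p) \oplus (e^n - q) \sim (e^n - q) \oplus (e^m - p)$; identifying the second relation as $e^{m+n} - (p \oplus q) \sim e^{m+n} - (q \oplus p)$ and invoking Proposition \ref{18} yields $p \oplus q \sim_u q \oplus p$. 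For part (3), Proposition \ref{18} turns the hypotheses into $p \sim p'$, $e^m - p \sim e^m - p'$, $q \sim q'$, and $e^n - q \sim e^n - q'$; two applications of Proposition \ref{15}(3) then give $p \oplus q \sim p' \oplus q'$ together with $(e^m - p) \oplus (e^n - q) \sim (e^m - p') \oplus (e^n - q')$, the latter being precisely $e^{m+n} - (p \oplus q) \sim e^{m+n} - (p' \oplus q')$. A final appeal to Proposition \ref{18} completes the argument.

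I do not anticipate a genuine obstacle, since the entire proof is bookkeeping organized around Proposition \ref{18}. The one point that deserves a line of care is the decomposition $e^{m+n} - (p \oplus q) = (e^m - p) \oplus (e^n - q)$, which lets the complement clause in the definition of $\sim_u$ be handled by the same $\oplus$-compatibility results (Proposition \ref{15}(3) and (4)) that handle the projections themselves; once this identity is recorded, all three parts fall out uniformly.
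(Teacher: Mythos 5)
Your proposal is correct and follows essentially the same route as the paper: part (1) from Propositions \ref{l4} and \ref{18}, part (2) from Proposition \ref{15}(4) combined with Proposition \ref{18}, and part (3) from Proposition \ref{15}(3) combined with Proposition \ref{18}. Your explicit recording of the identity $e^{m+n} - (p \oplus q) = (e^m - p) \oplus (e^n - q)$ is a helpful detail that the paper leaves implicit, but it does not change the argument.
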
 
\begin{proof}
	(1) follows from Propositions \ref{l4} and \ref{18}. 
	
	(2). Let $p \in \mathcal{OP}_m(V)$ and $q \in \mathcal{OP}_n(V)$. It follows from Proposition \ref{15}(4) that $p \oplus q \sim q \oplus p$ and $(e^m - p) \oplus (e^n - q) \sim (e^n - q) \oplus (e^m - p)$. Thus $p \oplus q \sim_u q \oplus p$. 
	
	(3) follows from Propositions \ref{15}(3) and \ref{18}. 
\end{proof} 
\begin{corollary}
	Let $V$ be an absolute matrix order unit space satisfying condition (T) and let $p,q\in \mathcal{OP}_n(V)$ for some $n\in \mathbb{N}.$ Then following statements are equivalent:
	\begin{enumerate}
		\item $p \sim q;$
		\item $p \oplus 0 \sim_u q\oplus 0$;
		\item $0 \oplus p \sim_u 0 \oplus q$. 
	\end{enumerate}
\end{corollary}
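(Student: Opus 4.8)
The plan is to prove the cycle by establishing $(2)\Leftrightarrow(1)$ and $(3)\Leftrightarrow(1)$, routing the latter through $(2)$; essentially all of the content sits in the single implication $(1)\Rightarrow(2)$. The easy directions go first. If $p\oplus 0\sim_u q\oplus 0$, then the ortho-component $v$ of the witnessing unitary is itself a partial isometry with $\vert v^*\vert=p\oplus 0$ and $\vert v\vert=q\oplus 0$, so $p\oplus 0\sim q\oplus 0$; combining this with $p\sim p\oplus 0$ and $q\sim q\oplus 0$ from Proposition \ref{15}(1) and the transitivity of $\sim$ (Proposition \ref{l4}, available since $V$ satisfies (T)) gives $p\sim q$. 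Hence $(2)\Rightarrow(1)$, and $(3)\Rightarrow(1)$ is verbatim the same with the blocks interchanged.

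For $(1)\Rightarrow(2)$ I would apply Proposition \ref{18} in $\mathcal{OP}_{2n}(V)$ to the pair $p\oplus 0,\ q\oplus 0$. Since $e^{2n}=e^n\oplus e^n$, it says that $p\oplus 0\sim_u q\oplus 0$ if and only if
\[p\oplus 0\sim q\oplus 0 \quad\text{and}\quad (e^n-p)\oplus e^n\sim (e^n-q)\oplus e^n.\]
The first relation is immediate from $p\sim q$ and Proposition \ref{15}(1). The second is the crux, and it is precisely the order-theoretic replacement for the standard C$^*$-algebraic ``rotation'' unitary $\begin{bmatrix} v & e^n-p \\ e^n-q & v^*\end{bmatrix}$, which is unavailable to us because the multiplicative structure is absent.

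To prove it I would decompose both sides into three mutually orthogonal order projections, paired so that the complements are merely relocated while only $p\sim q$ is genuinely used. Writing $a=e^n-p$ and $b=e^n-q$, one has the orthogonal decompositions
\[(e^n-p)\oplus e^n=(a\oplus 0)+(0\oplus q)+(0\oplus b),\qquad (e^n-q)\oplus e^n=(0\oplus a)+(0\oplus p)+(b\oplus 0),\]
where in each case the three summands are mutually orthogonal: pieces living in different diagonal blocks are automatically orthogonal, while the same-block pairs are handled by $q\perp(e^n-q)$ and $(e^n-p)\perp p$ (Remark \ref{7}). Pairing $a\oplus 0\sim 0\oplus a$ and $0\oplus b\sim b\oplus 0$ (Proposition \ref{15}(1), which relocates a complement \emph{without} asserting $a\sim b$) with $0\oplus q\sim 0\oplus p$ (from $p\sim q$), an iteration of the additivity statement Proposition \ref{15}(2) yields $(e^n-p)\oplus e^n\sim(e^n-q)\oplus e^n$, as required. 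The main obstacle is exactly this step: the naive two-block decomposition fails because it would force $e^n-p\sim e^n-q$, which need not hold, and the three-block version circumvents this by swapping the $p$- and $q$-blocks while shunting the two complements into fresh blocks.

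Finally, for $(2)\Leftrightarrow(3)$ I would invoke the swap equivalences $p\oplus 0\sim_u 0\oplus p$ and $q\oplus 0\sim_u 0\oplus q$ (the special case $q=0_n$ of the relation $p\oplus q\sim_u q\oplus p$ established in the corollary following Proposition \ref{18}) together with transitivity of $\sim_u$ (valid under (T)): the chain $0\oplus p\sim_u p\oplus 0\sim_u q\oplus 0\sim_u 0\oplus q$ gives $(2)\Rightarrow(3)$, and reversing it gives $(3)\Rightarrow(2)$, completing the equivalence.
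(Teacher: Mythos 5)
Your proposal is correct, but it is organized genuinely differently from the paper's proof. The paper handles $(1)\Rightarrow(2),(3)$ in one stroke by writing down the explicit block matrix $u=\begin{bmatrix} v^* & e^n-q \\ e^n-p & v\end{bmatrix}$, where $v$ is a partial isometry witnessing $p\sim q$, decomposing it into four mutually orthogonal partial isometries $u_{11},u_{12},u_{21},u_{22}$, concluding that $u$ is a unitary, and then reading off both (2) and (3) from the ortho-components $u_{11}$ and $u_{22}$ of this single unitary; the converse direction is, as in your argument, Proposition \ref{18} together with Propositions \ref{15}(1) and \ref{l4}. You never construct a unitary at all: you reduce $(1)\Rightarrow(2)$ to the characterization in Proposition \ref{18} applied in $\mathcal{OP}_{2n}(V)$, prove the resulting complement condition $(e^n-p)\oplus e^n\sim(e^n-q)\oplus e^n$ by your three-block orthogonal decomposition and an iteration of Proposition \ref{15}(2), and then transfer (2) to (3) via the swap relation $p\oplus 0\sim_u 0\oplus p$ and transitivity of $\sim_u$ from the corollary following Proposition \ref{18}. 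The two arguments are secretly the same rotation trick: your three pairs $(a\oplus 0,\,0\oplus a)$, $(0\oplus b,\,b\oplus 0)$, $(0\oplus q,\,0\oplus p)$ are precisely the absolute values of the paper's $u_{21}$, $u_{12}$, $u_{22}$, and unwinding the converse of Proposition \ref{18} through your data reassembles the paper's unitary $u$. What your route buys is modularity — every step quotes an established lemma, only condition (T) is used, and nothing has to be verified by hand about a $2n\times 2n$ matrix; what it costs is explicitness (the paper exhibits the witnessing unitary) and symmetry (the paper gets (2) and (3) simultaneously, while you must route (3) through (2)). One small point you leave tacit: the iterated use of Proposition \ref{15}(2) requires that the sum of the first two blocks remain orthogonal to the third, which follows from Remark \ref{7}(2) and the additivity of $\perp$ on positive elements (condition (6) in the characterization of absolutely ordered spaces); this is harmless but worth saying.
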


\begin{proof}
	First assume that $p \sim q$. Then  $p = \vert v^* \vert_n$ and $q = \vert v \vert_n$ for some partial isometry $v \in M_n(V)$. Put $u = \begin{bmatrix} v^* & e^n-q \\ e^n-p & v \end{bmatrix}$ and consider $u_{11} = \begin{bmatrix} v^* & 0 \\ 0 & 0 \end{bmatrix}$, $u_{12} = \begin{bmatrix} 0 & e^n-q \\ 0 & 0 \end{bmatrix}$, $u_{21} = \begin{bmatrix} 0 & 0 \\ e^n-p & 0 \end{bmatrix}$ and $u_{22} = \begin{bmatrix} 0 & 0 \\ 0 & v \end{bmatrix}$. Then $u = u_{11} + u_{12} + u_{21} + u_{22}$. Also, we have $\vert u_{11} \vert_{2n} = p \oplus 0$, $\vert u_{12} \vert_{2n} =  0 \oplus (e^n - q)$, $\vert u_{21} \vert_{2n} = (e^n - p) \oplus 0$ and $\vert u_{22} \vert_{2n} = 0 \oplus q$. Similarly, $\vert u_{11}^* \vert_{2n} = q \oplus 0$, $\vert u_{12}^* \vert_{2n} = (e^n - q) \oplus 0$, $\vert u_{21}^* \vert_{2n} = 0 \oplus (e^n - p) $ and $\vert u_{22}^* \vert_{2n} = 0 \oplus p$. Thus $u_{11}$, $u_{12}$, $u_{21}$ and $u_{22}$ are mutually orthogonal partial isometries in $M_n(V)$ and subsequently, $u$ is a unitary in $M_n(V)$. In particular, $p\oplus 0_n \sim_u q\oplus 0_n$ and $0_n\oplus p\sim_u 0_n\oplus q$. Therefore, (1) implies (2) and (3). 
	
	Conversely assume that $p \oplus 0\sim_u q \oplus 0$. Then by Proposition \ref{18}, $p \oplus 0 \sim q \oplus 0$. Thus by Propositions \ref{15}(1) and \ref{l4}, we may conclude that $p \sim q$. 
\end{proof}
\begin{proposition}\label{19}
	Let $(V,e)$ be an absolute matrix order unit space and let $p \in \mathcal{OP}_n(V)$ for some $n \in \mathbb{N}$. If $\delta$ is a unitary in $M_n$, then $p\sim_u \delta^* p \delta$. 
\end{proposition}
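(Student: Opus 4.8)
The plan is to construct explicitly a unitary $u \in M_n(V)$ together with an ortho-component $v$ realizing the equivalence, rather than routing through Proposition~\ref{18}. Modeled on the C$^*$-algebraic fact that $\delta^* p$ is a partial isometry with $\vert\delta^* p\vert = p$ and $\vert(\delta^* p)^*\vert = \delta^* p\delta$, the natural candidates are $u = e^n\delta$ and $v = p\delta$, where these products denote the module action of the scalar unitary $\delta \in M_n = M_{n,n}$ on elements of $M_n(V)$. Note that $u - v = (e^n - p)\delta$.

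First I would record that $\delta$ and $\delta^*$ are isometries (since $\delta\delta^* = \delta^*\delta = I_n$), so Remark~\ref{3}(1) applies to left multiplication by either. Using the identity $p\delta = \delta(\delta^* p\delta)$ together with Remark~\ref{3}(1) and Lemma~\ref{12}, I would compute
\begin{equation*}
\vert v\vert_n = \vert p\delta\vert_n = \vert \delta(\delta^* p\delta)\vert_n = \vert\delta^* p\delta\vert_n = \delta^*\vert p\vert_n\delta = \delta^* p\delta,
\end{equation*}
and, since $v^* = \delta^* p$, the simpler $\vert v^*\vert_n = \vert\delta^* p\vert_n = \vert p\vert_n = p$. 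The same two computations applied to $e^n$ in place of $p$ give $\vert u\vert_n = \vert u^*\vert_n = e^n$, so $u$ is a unitary; and since $\vert 2\delta^* p\delta - e^n\vert_n = \delta^*\vert 2p - e^n\vert_n\delta = \delta^* e^n\delta = e^n$, the element $q := \delta^* p\delta = \vert v\vert_n$ is an order projection, while $\vert v^*\vert_n = p$ is too, so $v \in \mathcal{PI}_n(V)$.

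Next I would verify that $v$ is an ortho-component of $u$, i.e.\ $v \perp (u - v)$. By Remark~\ref{4}(2) and Proposition~\ref{5}(1) it suffices to check $\vert v\vert_n \perp \vert u - v\vert_n$ and $\vert v^*\vert_n \perp \vert (u-v)^*\vert_n$. Writing $w = u - v = (e^n - p)\delta$, the same absolute-value computations give $\vert w\vert_n = \delta^*(e^n - p)\delta$ and $\vert w^*\vert_n = e^n - p$. The pair $\vert v^*\vert_n = p$ and $\vert w^*\vert_n = e^n - p$ is orthogonal because $p$ is an order projection (Remark~\ref{7}(1)). For the remaining pair, I would use Lemma~\ref{12} and the order-projection identity $\vert 2p - e^n\vert_n = e^n$ to compute
\begin{equation*}
\vert \delta^* p\delta - \delta^*(e^n - p)\delta\vert_n = \delta^*\vert 2p - e^n\vert_n\delta = \delta^* e^n\delta = e^n = \delta^* p\delta + \delta^*(e^n - p)\delta,
\end{equation*}
which is precisely $\vert v\vert_n \perp \vert w\vert_n$. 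Hence $v \perp w$, and $u = v + (u - v)$ exhibits $v$ as an ortho-component of the unitary $u$.

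Finally, reading off $\vert v^*\vert_n = p$ and $\vert v\vert_n = \delta^* p\delta$, the definition of unitary equivalence yields $p \sim_u \delta^* p\delta$. There is no serious obstacle here; the one point demanding care is the \emph{direction} of the two absolute-value computations---matching the definition's requirement $p = \vert v^*\vert_n$, $q = \vert v\vert_n$---and it is precisely the choice $v = p\delta$ (rather than $v = \delta^* p$) that aligns them correctly. Everything else reduces to isometry-invariance of the absolute value (Remark~\ref{3}(1)) and the conjugation formula (Lemma~\ref{12}), so no genuinely new estimate is required.
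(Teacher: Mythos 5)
Your proof is correct, and at its core it uses the same witness as the paper: the paper's proof takes $v = \delta^* p$ (the adjoint of your $v = p\delta$), computes $\vert \delta^* p\vert_n = p$ and $\vert p\delta\vert_n = \vert \delta(\delta^* p\delta)\vert_n = \delta^* p\delta$ via Remark \ref{3}(1), repeats the computation for $e^n - p$, and then concludes $p \sim_u \delta^* p\delta$ by appealing (implicitly) to the characterization of Proposition \ref{18}, namely that $p \sim_u q$ if and only if $p \sim q$ and $e^n - p \sim e^n - q$. Where you genuinely differ is in bypassing Proposition \ref{18}: you assemble the unitary $u = e^n\delta$ explicitly, check $\vert u\vert_n = \vert u^*\vert_n = e^n$, and verify the orthogonality $v \perp (u - v)$ directly via Remark \ref{4}(2), Lemma \ref{12} and Remark \ref{7}(1) --- in effect inlining, for this particular pair of partial isometries, the ``if'' direction of Proposition \ref{18}'s proof. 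Your version buys self-containedness (the definition of $\sim_u$ is verified from scratch, and the witnessing unitary is exhibited concretely as $\delta$ acting on $e^n$); the paper's version buys brevity, since the orthogonality bookkeeping is done once and for all in Proposition \ref{18}. One small remark: your care about the orientation $p = \vert v^*\vert_n$, $q = \vert v\vert_n$ is sound but not essential, since $\sim$ is symmetric by taking adjoints (as in part (b) of Proposition \ref{l4}, an argument that uses no extra hypothesis); indeed the paper writes its witness in the opposite orientation and relies on exactly this symmetry.
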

\begin{proof} 
	Let $\delta \in M_n$ be a unitary. Put $\delta^* p \delta = q$. Consider $v = \delta^* p$. Then $\vert v \vert_n = \vert \delta^* p\vert_n = p$ and $\vert v^* \vert_n = \vert p \delta \vert_n = \vert \delta q \vert_n = q$ for $p\delta = \delta q$. Thus $p \sim q$. Since $e^n - q = e^n - \delta^* p \delta = \delta^* (e^n - p) \delta$, by a similar argument, we can also show that $(e^n - p) \sim (e^n - q)$. Thus $p \sim_u q$. 
\end{proof}
\begin{corollary}
	Let $(V,e)$ be an absolute matrix order unit space satisfying condition (T) and let $p \in \mathcal{OP}_n(V)$ for some $n \in \mathbb{N}$. Then $\alpha p\alpha^* \sim p$ for any $\alpha \in M_{m,n}$ with $\alpha^* \alpha = I_n$. 
\end{corollary}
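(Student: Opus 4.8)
The plan is to reduce the statement to Proposition \ref{19} by extending the isometry $\alpha$ to a unitary. Since $\alpha^*\alpha = I_n$, the matrix $\alpha \in M_{m,n}$ has rank $n$, so $n \le m$ and---exactly as in the proof of the earlier proposition (the one asserting $\alpha p\alpha^* \in \mathcal{OP}_m(V)$ when $\alpha^*\alpha = I_n$)---there is a unitary $\delta \in M_m$ with $\alpha = \delta\begin{bmatrix} I_n \\ 0_{m-n,n}\end{bmatrix}$. First I would record the two membership facts I need: by that same proposition $\alpha p\alpha^* \in \mathcal{OP}_m(V)$, and by Proposition \ref{11} the padded projection $p\oplus 0_{m-n}$ also lies in $\mathcal{OP}_m(V)$. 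A direct block computation then gives the key identity $\alpha p\alpha^* = \delta\bigl(p\oplus 0_{m-n}\bigr)\delta^*$, which links the two pictures.

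Next I would apply Proposition \ref{19} inside $M_m(V)$ to the order projection $\alpha p\alpha^*$ and the unitary $\delta$, obtaining $\alpha p\alpha^* \sim_u \delta^*(\alpha p\alpha^*)\delta = p\oplus 0_{m-n}$, where the last equality uses $\delta^*\delta = I_m$. Passing from unitary equivalence to partial isometry equivalence via Proposition \ref{18} yields $\alpha p\alpha^* \sim p\oplus 0_{m-n}$. On the other hand, Proposition \ref{15}(1) gives $p \sim p\oplus 0_{m-n}$. Combining these two equivalences through the symmetry and transitivity of $\sim$---which is precisely where the hypothesis (T) is used, via Proposition \ref{l4}---delivers $\alpha p\alpha^* \sim p$.

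The only real subtlety is the bookkeeping with matrix sizes: $\sim_u$ is defined only between order projections of the same order, so it is essential to carry everything inside $M_m(V)$ and to pad $p$ up to $p\oplus 0_{m-n}$ before comparing, rather than trying to relate $\alpha p\alpha^*$ to $p$ directly. I expect this padding step, together with verifying the block identity $\alpha p\alpha^* = \delta(p\oplus 0_{m-n})\delta^*$, to be the main place where care is required; everything else is a formal chaining of the cited results. Condition (T) enters only to link the two equivalences, and in fact one can avoid it entirely by exhibiting the partial isometry explicitly: taking $v = \alpha p \in M_{m,n}(V)$, Remark \ref{3}(1) and (4) give $|v|_{m,n} = p$, while conjugating $\begin{bmatrix} 0 & v \\ v^* & 0\end{bmatrix}$ by the unitary $\begin{bmatrix}\delta & 0 \\ 0 & I_n\end{bmatrix}$ and invoking Lemma \ref{12} together with Remark \ref{3}(2),(4),(5) identifies $|v^*|_{n,m} = \alpha p\alpha^*$; since both absolute values are order projections, $v \in \mathcal{PI}_{m,n}(V)$ directly realizes $\alpha p\alpha^* \sim p$. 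I would present the first route as the main argument, since it dovetails with the flow of the section, and note the second as a remark.
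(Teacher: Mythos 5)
Your main argument is correct and is essentially the paper's own proof: extend $\alpha$ to a unitary $\delta \in M_m$ with $\alpha = \delta\begin{bmatrix} I_n \\ 0_{m-n,n}\end{bmatrix}$, write $\alpha p\alpha^* = \delta(p\oplus 0_{m-n})\delta^*$, invoke Proposition \ref{19} to get $\alpha p\alpha^* \sim_u p\oplus 0_{m-n}$, and chain with $p\oplus 0_{m-n} \sim p$ from Proposition \ref{15}(1) using transitivity (Proposition \ref{l4}, where (T) enters); you merely make explicit the appeal to Proposition \ref{18} that the paper leaves tacit. Your closing remark, however, is a genuinely different and sharper route that the paper does not take: setting $v = \alpha p$, Remark \ref{3}(1) gives $|v|_{m,n} = p$, and the conjugation argument via Lemma \ref{12} and Remark \ref{3}(2),(4),(5) gives $|v^*|_{n,m} = \delta(p\oplus 0_{m-n})\delta^* = \alpha p\alpha^*$, so $v$ is itself the required partial isometry and $\alpha p\alpha^* \sim p$ follows straight from the definition of $\sim$. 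Since the definition of partial isometric equivalence does not require $\sim$ to be transitive, this second argument shows that the hypothesis (T) in the statement is actually dispensable, which is a small but real improvement over the paper's proof.
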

\begin{proof}
	Let $\alpha \in M_{m,n}$ with $\alpha^* \alpha = I_n$. Find a unitary $\delta \in M_m$ such that $\alpha = \delta \begin{bmatrix} I_n \\ 0_{m-n,n} \end{bmatrix}$. Then $\alpha p\alpha^*=\delta (p \oplus 0_{m-n}) \delta^* \in \mathcal{OP}_m$. Now, by Proposition \ref{19}, we have $\delta (p \oplus 0_{m-n})\delta^* \sim_u (p \oplus 0_{m-n}) \sim p$. Thus $\alpha p \alpha^* \sim p$.
\end{proof}

\section{Infinite projections}
\begin{definition}
Let $V$ be an absolute matrix order unit space and let $p \in \mathcal{OP}_n(V)$ for some $n \in \mathbb{N}$. Then $p$ is said to be \emph{infinite}, if there exists $q \in \mathcal{OP}_n(V)$ with $q \le p$ and $q \neq p$ such that $p \sim q$. We say that $p$ is \emph{finite}, if it is not infinite.
\end{definition} 

\begin{remark}\label{20} 
	Let $V$ be an absolute matrix order unit space.
	\begin{enumerate}
		\item Let $p,q \in \mathcal{OP}_n(V)$ for some $n \in \mathbb{N}$ such that $p \leq q$. If $q$ is finite, then so is $p$. 
		\item Let $p_1, p_2, q \in \mathcal{OP}_n(V)$ with $p_1 \leq q$, $p_2 \leq q$ and $p_1 \perp p_2$. If $q$ is finite, then so is $p_1 + p_2$.   
	\end{enumerate} 
\end{remark}

\begin{corollary}
	Let $V$ be an absolute matrix order unit space and let $n \in \mathbb{N}$. Then the following statements are equivalent:
	\begin{enumerate}
		\item[(1)] $e^n$ is finite;
		\item[(2)] Every isometry in $M_n(V)$ is unitary;
		\item[(3)] $p$ is finite for all $p \in \mathcal{OP}_n(V)$.
	\end{enumerate}
\end{corollary}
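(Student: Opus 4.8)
The plan is to prove the two easily isolated equivalences separately: first $(1)\Leftrightarrow(3)$, which is essentially immediate from Remark \ref{20}(1), and then the loop $(1)\Leftrightarrow(2)$, which carries the real content and hinges on recognizing an isometry as a witness of a partial isometry equivalence involving $e^n$. A point I would flag at the outset is that this corollary assumes \emph{no} condition (T) or (H), so transitivity of $\sim$ is not available; the whole argument must use only reflexivity and symmetry of $\sim$, both of which hold unconditionally.

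For $(1)\Leftrightarrow(3)$: every order projection $p\in\mathcal{OP}_n(V)$ satisfies $0\le p\le e^n$ (recall from Remark \ref{7} that $\mathcal{OP}_n(V)\subseteq[0,e^n]$). If $e^n$ is finite, then Remark \ref{20}(1) applied with the larger projection equal to $e^n$ shows every such $p$ is finite, giving $(1)\Rightarrow(3)$; the reverse implication is trivial since $e^n\in\mathcal{OP}_n(V)$.

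The heart of the argument is the following dictionary between isometries and partial isometry equivalences of $e^n$. If $v\in M_n(V)$ is an isometry, then $v$ is a partial isometry with $\vert v\vert_n=e^n$, so $p:=\vert v^*\vert_n$ is an order projection and $v$ itself witnesses $p\sim e^n$; by symmetry of $\sim$ (the content of Proposition \ref{l4}(b), which does not require (T)) we obtain $e^n\sim p$ with $p\le e^n$. Conversely, if $e^n\sim q$ with $q\le e^n$, a witnessing $v\in\mathcal{PI}_n(V)$ has $\vert v^*\vert_n=e^n$ and $\vert v\vert_n=q$; then $v^*$ is again a partial isometry with $\vert v^*\vert_n=e^n$, i.e. $v^*$ is an isometry, and $q=\vert v\vert_n=\vert(v^*)^*\vert_n$. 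With this in hand, $(1)\Rightarrow(2)$ reads: given an isometry $v$, finiteness of $e^n$ applied to $e^n\sim p$ with $p=\vert v^*\vert_n\le e^n$ forces $p=e^n$, whence $\vert v\vert_n=\vert v^*\vert_n=e^n$ and $v$ is unitary. For $(2)\Rightarrow(1)$ I would argue by contraposition: if $e^n$ is infinite, choose $q\in\mathcal{OP}_n(V)$ with $q\le e^n$, $q\ne e^n$ and $e^n\sim q$; the witnessing $v$ makes $v^*$ an isometry, which by (2) is unitary, forcing $q=\vert v\vert_n=e^n$, a contradiction.

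I expect the only real subtlety to be the bookkeeping with the adjoint: one must keep straight which of $\vert v\vert_n,\vert v^*\vert_n$ plays the ``source'' and which the ``range'' role in the definition of $\sim$, and verify that $v^*$ is again a partial isometry (immediate, since the partial isometry condition is symmetric in $v$ and $v^*$) so that it qualifies as an isometry. Everything else reduces to the two structural facts already recorded: $\mathcal{OP}_n(V)\subseteq[0,e^n]$ together with Remark \ref{20}(1), and the unconditional symmetry of $\sim$.
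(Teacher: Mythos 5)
Your proof is correct. Your treatment of $(1)\Leftrightarrow(3)$ (Remark \ref{20}(1) plus the trivial converse) and of $(1)\Rightarrow(2)$ coincides with the paper's; the difference lies in how you close the loop. The paper proves $(2)\Rightarrow(3)$ directly: given $p \sim q \le p$ in $\mathcal{OP}_n(V)$ witnessed by $v$ with $\vert v\vert_n = p$ and $\vert v^*\vert_n = q$, it verifies $v \perp (e^n - p)$ and extends $v$ to $w = v + (e^n - p)$, which by Corollary \ref{10} and Proposition \ref{5}(1) is an isometry with $\vert w\vert_n = e^n$ and $\vert w^*\vert_n = q + (e^n - p)$; unitarity of $w$, granted by (2), then forces $q = p$. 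You instead prove $(2)\Rightarrow(1)$ by contraposition, which needs no extension at all: when the infinite projection is $e^n$ itself, the adjoint of the witnessing partial isometry already has absolute value $e^n$, hence is an isometry, and (2) immediately collapses the witness to a unitary. Your route buys economy --- no orthogonality check, no appeal to Corollary \ref{10} or Proposition \ref{5} --- at the cost of routing $(2)\Rightarrow(3)$ through the already-established $(1)\Rightarrow(3)$. The paper's route proves the stronger implication $(2)\Rightarrow(3)$ in one self-contained step and, more importantly, exhibits the mechanism (completing a partial isometry below $e^n$ to an isometry by adding $e^n - p$) that recurs elsewhere in the paper, e.g.\ in the ortho-complementation of partial unitaries to unitaries. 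Your explicit caveat that only reflexivity and symmetry of $\sim$ are available without condition (T) is warranted and correct; the paper's own argument also tacitly uses symmetry when it swaps the roles of $\vert v\vert_n$ and $\vert v^*\vert_n$, even though Proposition \ref{l4} is formally stated under (T).
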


\begin{proof}
	The equivalence of (1) and (3) follows from Remark \ref{20}(1).
	
	(1) implies (2): Let $v\in M_n(V)$ is an isometry, then $\vert v^*\vert_n \sim \vert v\vert_n =e^n.$ Since $\vert v^*\vert_n \in \mathcal{OP}(V),$ we have that $\vert v^*\vert_n \leq e^n.$ By assumption, we get that $\vert v^*\vert_n = e^n$ so that $v$ is unitary.
	
	(2) implies (3): Let $p,q\in \mathcal{OP}_n(V)$ such that $p\sim q\leq p.$ There exists $v\in V$ with $\vert v\vert_n = p$ and $\vert v^*\vert_n = q.$ Since $q \leq p,$ we get that $v \perp (e^n-p)$. Put $w= v+(e^n-p).$ Then $\vert w\vert_n = \vert v\vert_n + (e^n-p)=e^n$ and $\vert w^*\vert_n = \vert v^*\vert_n + (e^n-p)=q+(e^n-p).$ By assumption, we have that $\vert w^*\vert_n=e^n$ so that $q=p.$ Hence every projection in $M_n(V)$ is finite.
\end{proof}

In the next result, we characterize infinite projections. For this, we need the following: 
\begin{lemma}\label{21}
	Let $V$ be an absolute matrix order unit space satisfying conditions (T) and (H) and let $p \sim q$ for some $p \in \mathcal{OP}_m(V)$ and $q \in \mathcal{OP}_n(V)$, $m, n \in \mathbb{N}$. If $p$ is infinite, then $q$ is also infinite. 
\end{lemma}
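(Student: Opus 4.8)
The plan is to transport the witnessing subprojection of $p$ across the equivalence $p \sim q$ by means of Lemma \ref{13}, and then to verify that the resulting subprojection of $q$ is proper by appealing to the faithfulness of the absolute value established in Proposition \ref{6}. The overall shape is: produce a proper subprojection of $p$ equivalent to $p$, push it along the equivalence to get a candidate subprojection of $q$, and rule out the degenerate case where that candidate equals $q$.

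First, since $p$ is infinite, I would fix $p_1 \in \mathcal{OP}_m(V)$ with $p_1 \le p$, $p_1 \neq p$ and $p \sim p_1$. Because $V$ satisfies condition (T), Proposition \ref{l4} guarantees that $\sim$ is an equivalence relation on $\mathcal{OP}_\infty(V)$; in particular, symmetry gives $q \sim p$. Next, I would apply Lemma \ref{13} to the equivalence $p \sim q$ together with the subprojection $p_1 \le p$ (this is exactly where condition (H) is used): there is a $q_1 \in \mathcal{OP}_n(V)$ with $q_1 \le q$ such that $p_1 \sim q_1$ and $(p - p_1) \sim (q - q_1)$. Chaining $q \sim p \sim p_1 \sim q_1$ through transitivity yields $q \sim q_1$ with $q_1 \le q$, so $q_1$ is the natural candidate for a proper subprojection of $q$ equivalent to $q$.

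The only point that genuinely needs an argument is that $q_1 \neq q$, and this is the main (if modest) obstacle. I would argue by contradiction: if $q_1 = q$, then $q - q_1 = 0$, and the relation $(p - p_1) \sim (q - q_1) = 0$ supplies a partial isometry $w \in \mathcal{PI}_{m,n}(V)$ with $\vert w \vert_{m,n} = q - q_1 = 0$ and $\vert w^* \vert_{n,m} = p - p_1$. Since $M_{m+n}(V)^+$ is Archimedean, the second assertion of Proposition \ref{6} forces $w = 0$, whence $p - p_1 = \vert w^* \vert_{n,m} = 0$, i.e. $p_1 = p$, contradicting $p_1 \neq p$. Therefore $q_1 \neq q$, and $q \sim q_1 \le q$ with $q_1 \neq q$ exhibits $q$ as infinite, which completes the proof.
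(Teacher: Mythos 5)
Your proof is correct and follows essentially the same route as the paper's: fix the proper subprojection $p_1 \sim p$, transport it across $p \sim q$ with Lemma \ref{13}, chain the equivalences via Proposition \ref{l4}, and then rule out the degenerate case $q_1 = q$. The only (immaterial) difference is in that last step, where the paper concludes $0 \neq \Vert p - p_1 \Vert_m = \Vert q - q_1 \Vert_n$ from the norm identity $\Vert v \Vert = \Vert \vert v \vert \Vert = \Vert \vert v^* \vert \Vert$, while you instead invoke Proposition \ref{6} to force the implementing partial isometry to vanish; both verifications are valid.
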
 
\begin{proof} 
	Assume that $p$ is infinite. Then $p \sim r \leq p$ with $r \neq p$ for some $r \in \mathcal{OP}_m(V)$.  By Lemma \ref{13}, there exists $s \in \mathcal{OP}_n(V)$ such that $s \leq q$ and $r \sim s$ and $(p - r) \sim (q - s)$. As $p \sim q$, $p \sim r$ and $r \sim s$, by Proposition \ref{l4}, we have $q \sim s$. Since $(p-r) \sim (q-s)$, we also have $0 \neq \Vert p - r \Vert_m = \Vert q - s \Vert_n$. Thus $q \neq s$ so that $q$ is also infinite.
\end{proof} 
\begin{theorem}
	Let $V$ be an absolute matrix order unit space satisfying condition (T) and let $p \in \mathcal{OP}_n(V) \setminus \lbrace 0 \rbrace$ for some $n \in \mathbb{N}$. Then the following statements are equivalent: 
	\begin{enumerate}
		\item $p$ is an infinite projection; 
		\item there exist $q, r \in \mathcal{OP}_n(V) \setminus \lbrace 0 \rbrace$ with $q \perp r$ such that $p = q + r$ and $p \sim q$; 
		\item $p \oplus 0$ is an infinite projection; 
		\item $0 \oplus p$ is an infinite projection;
		\item $\alpha p \alpha^*$ is an infinite projection whenever $\alpha \in M_{m,n}$ with $\alpha^* \alpha = I_n$;
    \end{enumerate}
    
Moreover, if $V$ also satisfy (H), then the above statements are also equivalent to:		
		
		\begin{enumerate}  
		\item[(6)] $p \oplus q \preceq p$ for some $q \in \mathcal{OP}_\infty(V) \setminus \lbrace 0 \rbrace$.
	\end{enumerate}
\end{theorem}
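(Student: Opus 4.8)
The plan is to dispose first of the equivalences that require no reduction in matrix size, and then to isolate the single structural lemma that makes the size-reduction possible; that lemma is the real content.

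I would begin with (1) $\Leftrightarrow$ (2), using the standard order-projection fact (extractable from Remark \ref{7} and the orthogonality axioms): if $a \le b$ in $\mathcal{OP}_k(V)$, then $a \perp (e^k - b)$, hence $a + (e^k - b) \in \mathcal{OP}_k(V)$, and its complement $b - a$ again lies in $\mathcal{OP}_k(V)$ with $a \perp (b-a)$. Granting this, (1) $\Rightarrow$ (2) is immediate: if $p \sim q$ with $q \le p$ and $q \ne p$, set $r = p - q$, a nonzero order projection orthogonal to $q$ with $p = q + r$. Conversely, if $p = q + r$ with $q \perp r$, $r \ne 0$ and $p \sim q$, then $q \le p$, $q \ne p$, so $p$ is infinite.

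Next I would treat (5) and the ``upward'' transport. Taking $\alpha = I_n$ shows (5) $\Rightarrow$ (1), while $\alpha = \left[\begin{smallmatrix} I_n \\ 0_n\end{smallmatrix}\right]$ and $\alpha = \left[\begin{smallmatrix} 0_n \\ I_n\end{smallmatrix}\right]$ give (5) $\Rightarrow$ (3) and (5) $\Rightarrow$ (4). For (1) $\Rightarrow$ (5), I would use that any isometry $\alpha \in M_{m,n}$ factors as $\alpha = \delta\left[\begin{smallmatrix} I_n \\ 0\end{smallmatrix}\right]$ with $\delta \in M_m$ unitary, so $\alpha p \alpha^* = \delta(p \oplus 0_{m-n})\delta^*$. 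If $p \sim q \le p$ with $q \ne p$, then by Proposition \ref{15}(3) we get $p \oplus 0_{m-n} \sim q \oplus 0_{m-n} \le p \oplus 0_{m-n}$ with strict inequality, so $p \oplus 0_{m-n}$ is infinite. Then I would record that unitary conjugation preserves infiniteness: Proposition \ref{19} gives $x \sim_u \delta^* x \delta$, hence $x \sim \delta^* x \delta$ by Proposition \ref{18}, and combined with transitivity of $\sim$ (Proposition \ref{l4}, which is where condition (T) enters) this transports an infiniteness witness for $p \oplus 0_{m-n}$ to one for $\delta(p\oplus 0_{m-n})\delta^* = \alpha p \alpha^*$. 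This establishes (1) $\Leftrightarrow$ (2) $\Leftrightarrow$ (5) and gives (5) $\Rightarrow$ (3),(4).

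The main obstacle is the descent (3) $\Rightarrow$ (1) (and symmetrically (4) $\Rightarrow$ (1)), since the corollary $\alpha p\alpha^* \sim p$ only moves to \emph{larger} matrix algebras and no isometry shrinks $M_{2n}$ to $M_n$. The crux is a block-structure lemma: every $\tilde q \in \mathcal{OP}_{2n}(V)$ with $\tilde q \le p \oplus 0_n$ has the form $\tilde q = q \oplus 0_n$ with $q \in \mathcal{OP}_n(V)$ and $q \le p$. To prove it, write $\tilde q$ in $2\times 2$ block form with diagonal blocks $a,c$ and off-diagonal $b$; compressing $0 \le \tilde q \le e^n \oplus 0_n$ by $\left[\begin{smallmatrix} 0_n \\ I_n\end{smallmatrix}\right]$ forces $0 \le c$ and $c \le 0$, so $c = 0$; then the argument of Proposition \ref{6} (conjugate by $kI_n \oplus k^{-1}I_n$, let $k$ vary, and invoke the Archimedean property and properness of $M_{2n}(V)^+$) forces $b = 0$; finally Proposition \ref{11} gives $a = q \in \mathcal{OP}_n(V)$. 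Granting the lemma, (3) $\Rightarrow$ (1) follows: an infiniteness witness $\tilde q \le p \oplus 0_n$, $\tilde q \ne p \oplus 0_n$, $p \oplus 0_n \sim \tilde q$ must be $q \oplus 0_n$ with $q \le p$, $q \ne p$, and then $p \sim p \oplus 0_n \sim q \oplus 0_n \sim q$ (Proposition \ref{15}(1) and transitivity) exhibits $p$ as infinite; the case (4) $\Rightarrow$ (1) is identical after applying the swap unitary (or the symmetric block lemma).

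Finally, for the ``moreover'' part under (H): for (1) $\Rightarrow$ (6) I would write $p = q_0 + r_0$ as in (2) and compute $p \oplus r_0 \sim q_0 \oplus r_0 \sim q_0 + r_0 = p$ via Proposition \ref{15}(3),(5) and transitivity, so $p \oplus r_0 \sim p \le p$, i.e.\ $p \oplus r_0 \preceq p$ with $r_0 \ne 0$. For (6) $\Rightarrow$ (1), from $p \oplus q \preceq p$ Proposition \ref{17} yields $p \sim (p \oplus q) \oplus p_0 = p \oplus q'$ with $q' = q \oplus p_0 \ne 0$; since $p \oplus 0 \le p \oplus q'$ strictly and $p \oplus q' \sim p \sim p \oplus 0$, the projection $p \oplus q'$ is infinite, and Lemma \ref{21} (which uses both (T) and (H)) transports this back to $p$. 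Strictness throughout rests on the observation that $x \sim 0$ forces $x = 0$ (Proposition \ref{6}), so that $q' \ne 0$ indeed produces a proper subprojection. I expect the block-structure lemma of the third paragraph to be the only genuinely delicate point; everything else is bookkeeping with Proposition \ref{15}, the equivalence relation $\sim$, and the comparison results.
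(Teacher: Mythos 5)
Your proof is correct and follows essentially the same route as the paper's, down to the same supporting results (Propositions \ref{l4}, \ref{11}, \ref{15}, \ref{17}, \ref{19} and Lemma \ref{21}), with only a cosmetic reorganization of which implications form the cycle. The one point where you go beyond the paper is the block-structure lemma --- that any order projection dominated by $p \oplus 0_n$ must have the form $q \oplus 0_n$ with $q \in \mathcal{OP}_n(V)$, $q \le p$ --- which the paper's proof of (1)$\Leftrightarrow$(3) simply asserts without justification; your argument for it (compression to kill the corner block, the Archimedean/properness trick of Proposition \ref{6} to kill the off-diagonal block, then Proposition \ref{11}) is exactly the missing step and is correct.
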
 
\begin{proof}
	(1) is equivalent to (2) by the definition.  
	 
	(1) is equivalent to (3): Suppose that $r \leq p\oplus 0$. Then there exist $s \in \mathcal{OP}_\infty(V)$ such that $s\leq p$ and $r = p \oplus 0$. Now equivalence of (1) and (3) follows from Propositions \ref{l4}, \ref{15}(1) and \ref{11}. 
	
	Similarly, we can show that (1) is equivalent to (4) as well.
	
	(3) is equivalent to (5): Let $\alpha \in M_{m,n}$ such that $\alpha^* \alpha = I_n.$ Then $\alpha p \alpha^* = \delta (p \oplus 0_{m-n}) \delta^*$ for some $\delta$ unitary in $M_m$. Since $\delta$ is unitary, we get that $\alpha p \alpha^*$ is finite if, and only if, $p \oplus 0_{m-n}$ is finite. Hence equivalence of (3) and (5) follows.	
	
	Now suppose that $V$ also satisfy (H).
	
	(1) implies (6): Assume that $p$ is infinite. Then there exists $r \in \mathcal{OP}_\infty(V)$ such that $p \sim r \leq p$ and $r \neq p$. Put $q = p - r$. Then $q \in \mathcal{OP}_\infty(V) \setminus \lbrace 0 \rbrace$. Now, by Proposition \ref{15}, we get that $p = r + q \sim p \oplus q$. Thus $p\oplus q\preceq p$.
	
	(6) implies (1): Finally, assume that there exist $q \in \mathcal{OP}_m(V)\setminus \lbrace 0\rbrace$ such that $p\oplus q \preceq p$ for some $m \in \mathbb{N}$. Then by Proposition \ref{17}, we get that $p \sim p\oplus q \oplus q_0$ for some $q_0 \in \mathcal{OP}_l(V)$, $l \in \mathbb{N}$. Thus 
	$$p\oplus q \oplus q_0 \sim p \sim p\oplus 0_{m+l} \leq p\oplus q \oplus q_0.$$ 
	Since $p\oplus 0_{m+l} \neq p\oplus q \oplus q_0,$ we get that $p\oplus q\oplus q_0$ is an infinite projection. Hence by Lemma \ref{21}, $p$ is also an infinite projection. 
\end{proof}

\begin{remark}
Let $V$ be an absolute matrix order unit space satisfying (T) and let $p \in \mathcal{OP}_n(V), q \in \mathcal{OP}_m(V)$ for some $m,n \in \mathbb{N}$. If $p$ or $q$ is infinite, then so is $p \oplus q$. In fact, in this case, either $p \oplus 0_m$ or $0_n \oplus q$ is infinite so that $p \oplus q$ is also infinite.
\end{remark}
\begin{proposition}
Let $V$ be an absolute matrix order unit space satisfying conditions (T) and (H) and assume that $p\in \mathcal{OP}_m(V)$ such that $p$ is infinite. Then there exists a strictly decreasing sequence of infinite projections $\lbrace r_n\rbrace_{n=1}^{\infty}$ with $r_1=p$ such that $r_n \sim p$ as well as $(r_n - r_{n+1}) \sim (r_{n+1} - r_{n+2})$ for all $n\in \mathbb{N}$. 
\end{proposition}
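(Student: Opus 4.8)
The plan is to produce the sequence by a single induction, transporting the defining decomposition of an infinite projection ``inward'' through the equivalence at each stage, with Lemma~\ref{13} as the engine. No multiplicative ($C^*$-algebraic) structure is available, so the construction must run entirely through $\sim$, the order, and the three hypotheses-laden results: Lemma~\ref{13} (condition (H)), Lemma~\ref{21} (conditions (T) and (H)), and transitivity of $\sim$ from Proposition~\ref{l4} (condition (T)).

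First I would unwind the definition of infiniteness: since $p$ is infinite there is $r_2 \in \mathcal{OP}_m(V)$ with $r_2 \le p$, $r_2 \neq p$, and $p \sim r_2$. Setting $r_1 = p$ gives the base pair $(r_1, r_2)$ in $\mathcal{OP}_m(V)$ with $r_1 \sim r_2$ and $r_2 \le r_1$. The inductive step is then: given a pair $(r_{n-1}, r_n)$ in $\mathcal{OP}_m(V)$ with $r_n \le r_{n-1}$ and $r_{n-1} \sim r_n$, apply Lemma~\ref{13} to the equivalence $r_{n-1} \sim r_n$ with the subprojection $r_n \le r_{n-1}$. This delivers $r_{n+1} \le r_n$ with $r_n \sim r_{n+1}$ and $(r_{n-1} - r_n) \sim (r_n - r_{n+1})$. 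Crucially, the proof of Lemma~\ref{13} realises the two differences as $\vert w^* \vert$ and $\vert w \vert$ for a partial isometry $w$ (the orthogonal complement $u-v$ appearing there), so $r_{n-1} - r_n$ and $r_n - r_{n+1}$ are genuine order projections in $\mathcal{OP}_m(V)$ and the relation $(r_{n-1} - r_n) \sim (r_n - r_{n+1})$ is meaningful. The output pair $(r_n, r_{n+1})$ again satisfies the hypotheses, so the induction continues and produces the whole sequence, with $(r_n - r_{n+1}) \sim (r_{n+1} - r_{n+2})$ for every $n$ directly from the construction.

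It remains to verify the three global properties. Each $r_n \sim r_1 = p$ by transitivity of $\sim$ (Proposition~\ref{l4}), whence each $r_n$ is infinite by Lemma~\ref{21}. For strict decrease I would argue by norm: since $\sim$ is implemented by a partial isometry whose two absolute values are the projections being compared, the proposition $\Vert \vert w \vert \Vert = \Vert w \Vert = \Vert \vert w^* \vert \Vert$ shows that equivalent order projections have equal norm. Applying this along the chain $(r_1 - r_2) \sim (r_2 - r_3) \sim \cdots$ gives $\Vert r_n - r_{n+1} \Vert = \Vert r_1 - r_2 \Vert = \Vert p - r_2 \Vert > 0$ for all $n$, the base difference being nonzero because $r_2 \neq p$. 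Hence $r_n - r_{n+1} \neq 0$, i.e.\ $r_{n+1} < r_n$ strictly, and the sequence is strictly decreasing.

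The main obstacle is purely organisational: one must arrange the induction so that a single application of Lemma~\ref{13} to $(r_{n-1}, r_n)$ simultaneously yields (i) a subprojection $r_{n+1}$ of $r_n$ equivalent to $r_n$, which both keeps the inductive hypotheses alive and, via Lemma~\ref{21}, propagates infiniteness, and (ii) the difference equivalence $(r_{n-1} - r_n) \sim (r_n - r_{n+1})$. Once the induction is framed around the pair $(r_{n-1}, r_n)$ rather than a single projection, everything else—reflexivity and transitivity, norm preservation, and the order-projection nature of the successive differences—is already packaged in the cited results, and the verification is routine.
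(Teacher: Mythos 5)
Your proposal is correct and takes essentially the same route as the paper: set $r_1 = p$, extract $r_2$ from the definition of infiniteness, and iterate Lemma~\ref{13} on each pair $(r_{n-1}, r_n)$ to obtain $r_{n+1}$ together with the difference equivalence $(r_{n-1}-r_n) \sim (r_n - r_{n+1})$. The paper compresses everything after the first application of Lemma~\ref{13} into ``the result follows from induction on $n$,'' while you make explicit the points it leaves implicit (transitivity giving $r_n \sim p$, Lemma~\ref{21} giving infiniteness of each $r_n$, and the norm-preservation argument giving strict decrease), so the substance is identical.
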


\begin{proof} 
Let $p = r_1$. As $r_1$ is infinite, there exists $r_2 \in \mathcal{OP}_m(V)$ such that $r_2 \le r_1 \sim r_2$. By Lemma \ref{13}, we can find a projection $r_3 \in \mathcal{OP}_m$ with $r_3 \le r_2$ such that $r_2 \sim r_3$ and $(r_1 - r_2) \sim (r_2 - r_3)$. Now the result follows from induction on $n$.
\end{proof}
\begin{remark}
	Put $r_n - r_{n+1} = s_n$. As $r_{n+1} \le r_n$ with $r_n \neq r_{n+1}$, we have $s_n \in \mathcal{OP} \setminus \lbrace 0 \rbrace$ for all $n \in \mathbb{N}$. Further, as $\sum_{n=1}^{k} s_n = r_1 - r_{k+1} \le r_1$ for all $k \in \mathbb{N}$, we get that $s_m \perp s_n$ with $p = r_1 = \sum_{k=1}^{n} s_k + r_{n+1}$ for all $m, n \in \mathbb{N},m \neq n$.
\end{remark} 
\begin{corollary}
	Let $V$ be an absolute matrix order unit space satisfying conditions (T) and (H) and let $q \preceq p$ for some $p, q \in \mathcal{OP}_\infty(V)$. If $p$ is finite, then $q$ is also finite. 
\end{corollary}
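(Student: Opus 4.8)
The plan is to argue by contradiction, reducing everything to the two prior facts that govern how finiteness behaves under $\sim$ and under $\le$. Suppose, contrary to the claim, that $q$ is infinite. Unfolding the definition of $q \preceq p$, there exists some $r \in \mathcal{OP}_\infty(V)$ with $q \sim r$ and $r \le q \, p$ — that is, $q \sim r \le p$. I would first note that the relation $r \le p$ forces $r$ and $p$ to lie in a common $\mathcal{OP}_N(V)$ (the order relation is only defined between projections at the same matrix level), so the comparison $r \le p$ is a genuine order inequality in a single $M_N(V)_{sa}$, which is exactly the setting required by Remark \ref{20}(1).

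The first key step transfers infiniteness across the equivalence $q \sim r$. Since $V$ satisfies both (T) and (H), and $q$ is assumed infinite with $q \sim r$, Lemma \ref{21} applies directly and yields that $r$ is infinite. (Here one uses that $\sim$ is an equivalence relation under (T), via Proposition \ref{l4}, so that $q \sim r$ may be read in the direction needed by Lemma \ref{21}.)

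The second key step invokes the order relation $r \le p$ together with the hypothesis that $p$ is finite. By Remark \ref{20}(1), any order projection dominated by a finite order projection is itself finite; applying this to $r \le p$ with $p$ finite gives that $r$ is finite. This directly contradicts the conclusion of the previous step, that $r$ is infinite. Hence the assumption that $q$ is infinite is untenable, and $q$ must be finite.

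I do not expect any genuine obstacle here: the result is a short composition of two earlier statements, and the only point requiring a moment's care is the bookkeeping on matrix sizes, namely checking that the witness $r$ produced by $q \preceq p$ and the projection $p$ share a common $N$ so that both $r \le p$ and the hypotheses of Remark \ref{20}(1) and Lemma \ref{21} are literally met. Once that is observed, the argument is immediate.
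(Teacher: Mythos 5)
Your proof is correct, and it takes a genuinely more direct route than the paper's. You unfold the definition of $q \preceq p$ to get a witness $r \in \mathcal{OP}_\infty(V)$ with $q \sim r \le p$, then apply Lemma \ref{21} once (in contrapositive form: if $q$ were infinite, $q \sim r$ would force $r$ to be infinite) and Remark \ref{20}(1) once ($r \le p$ with $p$ finite forces $r$ finite), reaching a contradiction. The paper instead routes the argument through Proposition \ref{17}: from $q \preceq p$ it extracts $p \sim q \oplus q_0$, uses Lemma \ref{21} to transfer finiteness from $p$ to $q \oplus q_0$, then Remark \ref{20}(1) to pass down to $q \oplus 0_n \le q \oplus q_0$, and finally needs Proposition \ref{15}(1) together with Lemma \ref{21} a second time to move from $q \oplus 0_n$ back to $q$. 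Both arguments rest on the same two pillars --- finiteness passes down under $\le$ and transfers across $\sim$ --- but yours avoids Proposition \ref{17} and all the direct-sum bookkeeping, at no cost in hypotheses, since Lemma \ref{21} already requires both (T) and (H). What the paper's longer path buys is mainly stylistic consistency with the surrounding results, which systematically exploit the $\oplus$-characterization of $\preceq$; your version is shorter and equally rigorous. Two small points: your attention to matrix sizes (that the witness $r$ must live at the same level as $p$ for $r \le p$ to be a genuine order inequality) is exactly the right care to take, and your parenthetical appeal to symmetry of $\sim$ via Proposition \ref{l4} is actually unnecessary, since the definition of $\preceq$ already hands you $q \sim r$ in precisely the orientation that Lemma \ref{21} requires.
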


\begin{proof}
	Since $q \preceq p$, by Proposition \ref{17}, we get that $p\sim q \oplus q_0$ for some $q_0 \in \mathcal{OP}_n(V)$. Now, assume that $p$ is finite. Then by Lemma \ref{21}, $q \oplus q_0$ is also finite. Since $q\oplus 0_n \leq q \oplus q_0$, by Remark \ref{20}(1) and Lemma \ref{21}, we may conclude that $q$ is also finite. 
\end{proof}

\begin{proposition}
	Let $V$ be an absolute matrix order unit space satisfying conditions (T) and (H) and let $p \preceq q$ and $q \preceq p$ for some $p, q \in \mathcal{OP}_\infty(V)$. If one of them is finite, then $p \sim q$. 
\end{proposition}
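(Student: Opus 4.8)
The plan is to run the order-theoretic analogue of the Cantor--Schr\"oder--Bernstein argument, trading the relation $\preceq$ for the equivalence $\sim$ by means of Proposition \ref{17}, and then deriving a contradiction with finiteness through the characterization of infinite projections established in the Theorem of this section.

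First, since the hypotheses $p \preceq q$ and $q \preceq p$ and the desired conclusion $p \sim q$ are all symmetric in $p$ and $q$, I would assume without loss of generality that $p$ is the finite one. Applying Proposition \ref{17} to $p \preceq q$ yields $p_0 \in \mathcal{OP}_\infty(V)$ with $q \sim p \oplus p_0$, and applying it to $q \preceq p$ yields $q_0 \in \mathcal{OP}_\infty(V)$ with $p \sim q \oplus q_0$. The point of this step is that Proposition \ref{17} converts the asymmetric subordination $\preceq$ into genuine equivalences, which can then be chained.

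Next I would substitute the first equivalence into the second. Adding $q_0$ on the right of $q \sim p \oplus p_0$ via Proposition \ref{15}(3), and then using the associativity of $\oplus$ from Proposition \ref{15}(6), I obtain $p \sim q \oplus q_0 \sim (p \oplus p_0) \oplus q_0 = p \oplus (p_0 \oplus q_0)$. Writing $w := p_0 \oplus q_0 \in \mathcal{OP}_\infty(V)$ and invoking transitivity of $\sim$ (Proposition \ref{l4}, valid under (T)), this collapses to $p \sim p \oplus w$. The crux is then to force $w = 0$: from $p \sim p \oplus w$ and symmetry of $\sim$ I have $p \oplus w \sim p \le p$, hence $p \oplus w \preceq p$. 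If $w \neq 0$, the implication (6) $\Rightarrow$ (1) of the Theorem (which requires both (T) and (H)) forces $p$ to be infinite, contradicting the finiteness assumption. Therefore $w = p_0 \oplus q_0 = 0$; since the diagonal blocks of $p_0 \oplus q_0$ are $p_0$ and $q_0$ and the cone is proper, this gives $p_0 = 0$. Finally $q \sim p \oplus p_0 = p \oplus 0 \sim p$ by Proposition \ref{15}(1) and transitivity, which is the claim.

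The main obstacle I anticipate is not conceptual but is the bookkeeping of which matrix level each projection inhabits, so that the direct sums and the associativity of Proposition \ref{15}(6) are applied to objects of matching sizes, together with the mildly delicate verification that $p \oplus w \preceq p$ really has the shape demanded by clause (6) of the Theorem---namely that the summand $w$ is permitted to be a nonzero element of $\mathcal{OP}_\infty(V)$, which is exactly the case $w \neq 0$ we are ruling out. Once these points are settled, the argument reduces to a short chain of the previously established properties of $\sim$ and $\preceq$.
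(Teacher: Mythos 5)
Your proposal is correct and follows essentially the same route as the paper: reduce via Proposition \ref{17} to $p \sim (p \oplus p_0) \oplus q_0$, force $p_0 \oplus q_0 = 0$ using finiteness of $p$, and conclude $q \sim p \oplus 0 \sim p$. In fact, where the paper simply asserts ``then we get that $p_0 = 0$ and $q_0 = 0$,'' you supply the justification it leaves implicit---namely that $p \oplus (p_0 \oplus q_0) \preceq p$ together with clause (6) $\Rightarrow$ (1) of the theorem on infinite projections would make $p$ infinite unless $p_0 \oplus q_0 = 0$---which is exactly the intended argument.
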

\begin{proof}
	Without any loss of generality, we may assume that $p$ is a finite order projection. As $p \preceq q$ and $q \preceq p,$ by Proposition \ref{17}, we have $q \sim p \oplus p_0$ and $p \sim q\oplus q_0$ for some $p_0, q_0 \in \mathcal{OP}_\infty(V)$. Thus by Proposition \ref{15}, we get that $p \sim (p \oplus p_0) \oplus q_0$.  Then we get that $p_0 = 0$ and that $q_0 = 0$. Thus $q \sim p \oplus 0 \sim p$.
\end{proof}

\subsection{Properly infinite projections}

\begin{definition}
Let $V$ be an absolute matrix order unit space and let $p \in \mathcal{OP}_n(V) \setminus \lbrace 0 \rbrace$ for some $n \in \mathbb{N}.$ We say that $p$ is properly infinte, if there exist $r,s \in \mathcal{OP}_n(V)$ such that $r+s \leq p$ and $r \sim p \sim s.$ 
\end{definition}
\begin{proposition}
Let $V$ be an absolute matrix order unit space and let $p, q \in \mathcal{OP}_n(V) \setminus \lbrace 0 \rbrace$ for some $n \in \mathbb{N}$. If $p$ and $q$ both are properly infinite and if $p \perp q$, then $p+q$ is also properly infinite. 
\end{proposition}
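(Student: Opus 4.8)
The plan is to build the two required subprojections of $p+q$ by adding, component-wise, the subprojections that witness the proper infiniteness of $p$ and of $q$, and then to transport the equivalences across the orthogonal sum using Proposition \ref{15}(2). Observe that neither condition (T) nor (H) is needed here.

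First I would unwind the definition. Since $p$ is properly infinite, there are $r_1, s_1 \in \mathcal{OP}_n(V)$ with $r_1 + s_1 \le p$ and $r_1 \sim p \sim s_1$; likewise $q$ supplies $r_2, s_2 \in \mathcal{OP}_n(V)$ with $r_2 + s_2 \le q$ and $r_2 \sim q \sim s_2$. From $r_1 + s_1 \le p \le e^n$ and Remark \ref{7}(2) I obtain $r_1 \perp s_1$, and similarly $r_2 \perp s_2$. The essential orthogonality input is $p \perp q$: since $0 \le r_1 \le p$ and $p \perp q$, axiom (4) of an absolutely ordered space (applied in $M_n(V)_{sa}$) gives $r_1 \perp q$, and then $0 \le r_2 \le q$ gives $r_1 \perp r_2$. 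Running the same argument over all four pieces shows that $r_1, s_1, r_2, s_2$ are mutually orthogonal.

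Next I would set $r = r_1 + r_2$ and $s = s_1 + s_2$. Because $r_1 \perp r_2$ are order projections, Remark \ref{7}(2) gives $r \in \mathcal{OP}_n(V)$, and similarly $s \in \mathcal{OP}_n(V)$. Since $r_1 + s_1 \le p$ and $r_2 + s_2 \le q$, we have $r + s = (r_1 + s_1) + (r_2 + s_2) \le p + q$. To see $r \perp s$, I use axiom (5) (additivity of $\perp$ among positive elements): from $r_1 \perp s_1$ and $r_1 \perp s_2$ I get $r_1 \perp (s_1 + s_2) = s$, similarly $r_2 \perp s$, and hence $(r_1 + r_2) \perp s$, that is, $r \perp s$.

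Finally, the equivalences transport across the sum. Applying Proposition \ref{15}(2) to $r_1 \sim p$ and $r_2 \sim q$, together with $r_1 \perp r_2$ and $p \perp q$, yields $r = r_1 + r_2 \sim p + q$, and the same argument gives $s = s_1 + s_2 \sim p + q$. Since $p \perp q$ are nonzero order projections, $p + q \in \mathcal{OP}_n(V) \setminus \lbrace 0 \rbrace$ by Remark \ref{7}(2), and I have exhibited $r, s \in \mathcal{OP}_n(V)$ with $r + s \le p + q$ and $r \sim (p+q) \sim s$; thus $p+q$ is properly infinite. The only point requiring care is the bookkeeping of the orthogonality relations among the four subprojections, so that the hypotheses of Proposition \ref{15}(2), namely $r_1 \perp r_2$ and $p \perp q$, are genuinely satisfied; beyond this there is no deeper obstacle.
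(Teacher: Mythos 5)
Your proposal is correct and takes essentially the same route as the paper's proof: extract the witnessing subprojections $r_1, s_1 \le p$ and $r_2, s_2 \le q$, deduce $r_1 \perp r_2$ and $s_1 \perp s_2$ from $p \perp q$, and apply Proposition \ref{15}(2) to get $(r_1 + r_2) \sim (p+q) \sim (s_1 + s_2)$ with $(r_1 + r_2) + (s_1 + s_2) \le p + q$. The only difference is that you spell out details the paper leaves implicit, namely the hereditary-orthogonality argument behind $r_1 \perp r_2$ and the verification that $r_1 + r_2$ and $s_1 + s_2$ are again order projections; the additional check that $(r_1+r_2) \perp (s_1+s_2)$ is harmless but not required by the definition.
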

\begin{proof}
Assume that $p$ and q both are properly infinite. Then there exist $r_1, r_2, s_1, s_2 \in \mathcal{OP}_n(V)$ such that $(r_1 + s_1) \leq p$, $(r_2 + s_2) \leq q$, $r_1 \sim p \sim s_1$ and $r_2 \sim q \sim s_2$. Since $p \perp q$, we have 
$r_1 \perp r_2$ and $s_1 \perp s_2$. Now by Proposition \ref{15}(2), $(r_1 + r_2) \sim (p + q) \sim (s_1 + s_2)$. Also, $(r_1 + r_2) + (s_1 + s_2) \leq p + q$ so that $p + q$ is properly infinite. 
\end{proof}
\begin{theorem}\label{22}
	Let $V$ be an absolute matrix order unit space satisfying condition (T) and let $p \in \mathcal{OP}_\infty(V) \setminus \lbrace 0 \rbrace$. Then the following statements are equivalent:
	\begin{enumerate}
		\item $p$ is properly infinite; 
		\item $p \oplus 0$ is properly infinite;
		\item $0 \oplus p$ is properly infinite; 
	\end{enumerate}
	
Moreover, if $V$ also satisfy (H), then the above statements are also equivalent to:	
	
	\begin{enumerate}
	\item[(4)] $p \oplus p \preceq p$.
	\end{enumerate}
\end{theorem}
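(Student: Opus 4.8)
The plan is to prove the cycle (1)$\Leftrightarrow$(2)$\Leftrightarrow$(3) using only condition (T), and then, under the additional hypothesis (H), to establish (1)$\Leftrightarrow$(4). Throughout I would use that $\sim$ is an equivalence relation (Proposition \ref{l4}), the algebraic properties of $\oplus$ and $\sim$ recorded in Proposition \ref{15}, and in particular the amplification facts $p\sim p\oplus 0$ and $p\oplus 0\sim 0\oplus p$ from Proposition \ref{15}(1),(4).

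First I would dispose of the forward implications. Given that $p\in\mathcal{OP}_n(V)$ is properly infinite, say $r+s\le p$ with $r\sim p\sim s$ for $r,s\in\mathcal{OP}_n(V)$, I would amplify to $r\oplus 0$ and $s\oplus 0$: by Proposition \ref{15}(1),(3) and transitivity one has $r\oplus 0\sim p\oplus 0\sim s\oplus 0$, while $(r\oplus 0)+(s\oplus 0)=(r+s)\oplus 0\le p\oplus 0$, so $p\oplus 0$ is properly infinite; this gives (1)$\Rightarrow$(2), and (1)$\Rightarrow$(3) is identical with $0\oplus\,\cdot\,$ in place of $\cdot\oplus 0$.

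The converses (2)$\Rightarrow$(1) and (3)$\Rightarrow$(1) hinge on a structural lemma: every order projection $R\le p\oplus 0_k$ has the form $R=r\oplus 0_k$ with $r\in\mathcal{OP}_n(V)$ and $r\le p$ (the same fact already invoked in the preceding theorem on infinite projections). Granting it, if $p\oplus 0$ is properly infinite, witnessed by $R+S\le p\oplus 0$ with $R\sim p\oplus 0\sim S$, then $R=r\oplus 0$ and $S=s\oplus 0$ with $r+s\le p$, and Proposition \ref{15}(1) together with transitivity yields $r\sim p\sim s$, so $p$ is properly infinite; the case of $0\oplus p$ is symmetric. I expect this lemma to be the main obstacle, since its proof is not purely order-algebraic but invokes the Archimedean property of the cone. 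Writing $R=\begin{bmatrix} a & b\\ b^* & c\end{bmatrix}$, comparison with $p\oplus 0_k$ first forces $c=0$; conjugating by $I_n\oplus tI_k$ and dividing by $t$ then shows $\begin{bmatrix} 0 & b\\ b^* & 0\end{bmatrix}+t^{-1}(a\oplus 0)\in M_{n+k}(V)^+$ for every $t>0$, whence $\begin{bmatrix} 0 & b\\ b^* & 0\end{bmatrix}\ge 0$ by the Archimedean property, and repeating with $-b$ together with properness and Proposition \ref{6} gives $b=0$; finally $a\in\mathcal{OP}_n(V)$ by Proposition \ref{11}.

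Finally, assuming (H), I would treat (1)$\Leftrightarrow$(4). For (1)$\Rightarrow$(4): from $r+s\le p\le e^n$ and Remark \ref{7}(2) we get $r\perp s$, hence $r+s\in\mathcal{OP}_n(V)$ and $r+s\sim r\oplus s$ by Proposition \ref{15}(5); since $r\sim p\sim s$, Proposition \ref{15}(3) gives $r\oplus s\sim p\oplus p$, so $p\oplus p\sim r+s\le p$, that is $p\oplus p\preceq p$. For (4)$\Rightarrow$(1): by the definition of $\preceq$ there is $t\in\mathcal{OP}_\infty(V)$ with $t\le p$ and $p\oplus p\sim t$; applying Lemma \ref{13} to $p\oplus 0\le p\oplus p\sim t$ splits $t=t_1+(t-t_1)$ with $t_1\sim p\oplus 0\sim p$ and $t-t_1\sim 0\oplus p\sim p$, so $r=t_1$ and $s=t-t_1$ satisfy $r+s=t\le p$ and $r\sim p\sim s$, exhibiting $p$ as properly infinite.
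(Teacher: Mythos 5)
Your proof is correct, and its skeleton matches the paper's: amplification plus a corner argument gives (1)$\Leftrightarrow$(2)$\Leftrightarrow$(3), and Lemma \ref{13} drives the equivalence with (4). Two differences are worth recording. First, the structural lemma you isolate --- every order projection $R \le p \oplus 0_k$ has the form $r \oplus 0_k$ with $r \in \mathcal{OP}_n(V)$ and $r \le p$ --- is exactly what the paper uses both in its proof of (2)$\Rightarrow$(1) here and in the preceding theorem on infinite projections, but the paper asserts it without proof. Your argument for it (compress to kill the lower corner $c$; conjugate by $I_n \oplus tI_k$ and divide by $t$ to get $\begin{bmatrix} 0 & b \\ b^* & 0 \end{bmatrix} + t^{-1}(a \oplus 0) \ge 0$ for all $t>0$; invoke the Archimedean property and then, after repeating with $I_n \oplus (-tI_k)$, properness to force $b = 0$; finish with Proposition \ref{11}) is sound and makes the theorem self-contained, which is a genuine improvement on the text. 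Second, your route for (4)$\Rightarrow$(1) is not the paper's: the paper first applies Proposition \ref{17} to write $p \sim (p \oplus p) \oplus p_0$, and then uses Lemma \ref{13} (via the remark following it) to transfer the two orthogonal subprojections $(p \oplus 0) \oplus 0$ and $(0 \oplus p) \oplus 0$ across this equivalence, obtaining orthogonal $p_1, p_2 \le p$ with $p_1 \sim p \sim p_2$; you instead unwind the definition of $\preceq$ directly --- $p \oplus p \sim t \le p$ --- and apply Lemma \ref{13} once to $p \oplus 0 \le p \oplus p$, splitting $t = t_1 + (t - t_1)$ with $t_1 \sim p \sim t - t_1$. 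Your version is more economical: it uses a single application of Lemma \ref{13}, bypasses Proposition \ref{17}, and yields $r + s = t \le p$ for free, whereas the paper's route silently relies on the additional fact that two mutually orthogonal order projections dominated by $p$ have sum dominated by $p$. Both arguments use the same underlying tools, so the gain is in economy and in the gap you filled, not in generality.
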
 
\begin{proof}
	It follows from the definition that (1) implies (2) and (3).  
	
	(2) implies (1): Let $p \oplus 0$ be properly infinite. Then there exist $s_1, s_2 \in \mathcal{OP}_{\infty}(V)$ with $s_1, s_2 \leq p \oplus 0$ such that $s_1 \sim p \oplus 0 \sim s_2$. Since $s_1, s_2 \leq p \oplus 0$, there exist $r_1, r_2 \in \mathcal{OP}_{\infty}(V)$ with $r_1, r_2 \leq p$ such that $s_1 = r_1 \oplus 0$ and $s_2 = r_2 \oplus 0$. Thus $r_1 \oplus 0 \sim p \oplus 0 \sim r_2 \oplus 0$. Now by Proposition \ref{15}(1), we get that $r_1 \sim p \sim r_2$. Thus $p$ is also properly infinite.
	
	Similarly, we can show that (3) implies (1) as well.
	
	Now suppose that $V$ also satisfy (H).
	
	(1) implies (4): Assume that $p$ is properly infinite. Then there exist $r,s \in \mathcal{OP}_\infty(V)$ such that $r+s\leq p$ and $r \sim p \sim s.$ Since $r \perp s,$ by Proposition \ref{15}, we get that $(r+s) \sim p\oplus p.$ Hence $p \oplus p \preceq p.$
	
	(4) implies (1): Finally assume that $p\oplus p \preceq p$. Then by Proposition \ref{17}, we have $p \sim (p \oplus p) \oplus p_0$ for some $p_0 \in \mathcal{OP}_\infty(V)$. As 
	$$(p \oplus 0) \oplus 0 \leq (p \oplus p) \oplus p_0,$$
	$$(0 \oplus p) \oplus 0 \leq (p \oplus p) \oplus p_0$$
	and 
	$$(p \oplus 0) \oplus 0 \perp (0 \oplus p) \oplus 0,$$ 
	by Lemma \ref{13}, there exist $p_1,p_2 \in \mathcal{OP}_\infty(V)$ with $p_1, p_2 \leq p$, and $p_1 \perp p_2$ such that $p_1 \sim (p \oplus 0) \oplus 0$ and $p_2 \sim (0 \oplus p) \oplus 0$.  Thus $p_1 + p_2 \le p$ and $p_1 \sim p \sim p_2$ for $(p \oplus 0) \oplus 0 \sim p \sim (0 \oplus p) \oplus 0$ so that $p$ is properly infinite. 
\end{proof}
\begin{remark} 
	Let $V$ be an absolute matrix order unit space satisfying conditions (T) and (H) and let $p \in \mathcal{OP}_n(V)\setminus \lbrace 0\rbrace$ for some $n \in \mathbb{N}.$ 
	\begin{enumerate}
		\item Then $p$ is infinite whenever it is properly infinite. 
		\item If $q \in \mathcal{OP}_m(V)\setminus \lbrace 0\rbrace$ and if both $p,q$ are properly infinite, then so is $p \oplus q$. 
	\end{enumerate}
\end{remark}
\begin{proposition}\label{23}
Let $V$ be an absolute matrix order unit space satisfying conditions (T) and (H) and let $p, q \in \mathcal{OP}_\infty(V)\setminus \lbrace 0\rbrace$ be such that $p\preceq q \preceq p$. If p is properly infinite, then so is $q$.  
\end{proposition}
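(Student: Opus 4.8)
The plan is to bypass the definition of proper infiniteness entirely and work with the clean characterization furnished by Theorem \ref{22}: under conditions (T) and (H), an order projection $r \in \mathcal{OP}_\infty(V) \setminus \lbrace 0 \rbrace$ is properly infinite if and only if $r \oplus r \preceq r$. Since both (T) and (H) are assumed in the present statement, I can invoke this equivalence freely for $p$ and for $q$, and the whole argument becomes a short chain of comparisons in $\preceq$.

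First I would record the two facts the hypotheses hand us. Because $p$ is properly infinite, Theorem \ref{22} (in the form (1)$\Rightarrow$(4)) gives $p \oplus p \preceq p$. The target is the analogous inequality $q \oplus q \preceq q$, since by Theorem \ref{22} (this time (4)$\Rightarrow$(1)) and the fact that $q \neq 0$, that inequality is exactly what certifies $q$ as properly infinite. Next I would apply the monotonicity of $\oplus$ under $\preceq$, i.e. Proposition \ref{16}(2), to the hypothesis $q \preceq p$ taken in both slots, obtaining $q \oplus q \preceq p \oplus p$.

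Finally I would assemble the chain, using the transitivity of $\preceq$ from Proposition \ref{16}(1) together with the second hypothesis $p \preceq q$:
$$q \oplus q \preceq p \oplus p \preceq p \preceq q,$$
so that $q \oplus q \preceq q$. As noted, Theorem \ref{22} then forces $q$ to be properly infinite, which completes the proof.

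I do not expect a genuine obstacle here; the only point requiring care is that the entire argument leans on having the ``$r \oplus r \preceq r$'' criterion available in \emph{both} directions, and this is precisely what (T) and (H) supply through Theorem \ref{22}. The remaining work is bookkeeping: matching the indices correctly when I feed $q \preceq p$ into Proposition \ref{16}(2), and confirming that each projection appearing in the chain lies in $\mathcal{OP}_\infty(V)$ so that Propositions \ref{16} and \ref{17}, as well as Theorem \ref{22}, are applicable.
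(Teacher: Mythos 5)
Your proof is correct, and it is tighter than the paper's. Both arguments pivot on Theorem \ref{22}: under (T) and (H), a nonzero order projection $r$ is properly infinite if and only if $r \oplus r \preceq r$, and both proofs use this equivalence in each direction (for $p$ at the start, for $q$ at the end, the hypothesis $q \neq 0$ making the backward direction applicable). The difference is the middle. The paper never invokes Proposition \ref{16}; instead it unpacks every $\preceq$ into an explicit equivalence-with-remainder via Proposition \ref{17} (namely $q \sim p \oplus p_0$, $p \sim q \oplus q_0$, $p \sim (p\oplus p)\oplus r$) and then works through a chain of $\sim$-manipulations using Proposition \ref{15} to exhibit $q \sim (q\oplus q)\oplus(q_0\oplus q_0\oplus r\oplus p_0)$, converting back to $q \oplus q \preceq q$ by Proposition \ref{17}. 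You instead stay entirely inside the preorder: Proposition \ref{16}(2) applied with $q \preceq p$ in both slots gives $q \oplus q \preceq p \oplus p$, and transitivity from Proposition \ref{16}(1) closes the chain $q \oplus q \preceq p \oplus p \preceq p \preceq q$. Since Proposition \ref{16} is itself proved by exactly the decomposition-and-reassembly the paper repeats here, your argument is essentially the paper's argument better factored: it reuses established lemmas rather than re-deriving their content, names no remainder projections, and avoids the implicit appeals to associativity and commutativity of $\oplus$ up to $\sim$ that the paper's displayed chain needs. The only thing the paper's version buys in exchange is an explicit witness $s$ with $q \sim (q \oplus q) \oplus s$, which the statement does not require.
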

\begin{proof}
As $p\preceq q \preceq p$, by Proposition \ref{17}, we have $q \sim p \oplus p_0$ and $p \sim q\oplus q_0$ for some $p_0, q_0 \in \mathcal{OP}_\infty(V)$. Now assume that $p$ is properly infinite. Then by Theorem \ref{22}, $p \oplus p \preceq p$. Thus by Proposition \ref{17}, there exists $r \in \mathcal{OP}_\infty(V)$ such that $p \sim (p \oplus p) \oplus r$. A repeated use of Proposition \ref{15} yields that 
\begin{eqnarray*} 
q &\sim& p\oplus p_0 \sim ((p \oplus p) \oplus r) \oplus p_0 \\
&\sim& (((q\oplus q_0) \oplus (q \oplus q_0)) \oplus r) \oplus p_0 \\
&\sim& (q \oplus q) \oplus (q_0 \oplus q_0 \oplus r \oplus p_0).
\end{eqnarray*}
Again applying Proposition \ref{17} and Theorem \ref{22}, we may conclude that $q$ is properly infinite.
\end{proof} 
\begin{remark}
	It follows from the proof of Proposition \ref{23} that if $p\sim q$ and if $p$ is properly infinite, then so is $q$. 
\end{remark}

\end{document}